\def\pdt2{\partial_t^2}
\def\pdx2{\partial_x^2}
\newcommand{\normmm}[1]{{\left\vert\kern-0.25ex\left\vert\kern-0.25ex\left\vert #1
    \right\vert\kern-0.25ex\right\vert\kern-0.25ex\right\vert}}
\newcommand{\abs}[1]{\left\vert#1\right\vert}
\def\RR{{\mathbb{R}}}
\newtheorem{theo}{Theorem}[section]
\newtheorem{lem}[theo]{Lemma}
\newtheorem{cor}[theo]{Corollary}
\newtheorem{rem}[theo]{Remark}
\newtheorem{defi}[theo]{Definition}
\def\no{\noindent}
\title{Volume-preserving exponential integrators}
\author{Bin Wang\,\footnote{School of Mathematical Sciences, Qufu Normal
University, Qufu  273165,  P.R.China; Mathematisches Institut,
University of T\"{u}bingen, Auf der Morgenstelle 10, 72076
T\"{u}bingen, Germany. E-mail:~{\tt wang@na.uni-tuebingen.de}} \and
Xinyuan Wu\thanks{School of Mathematical Sciences, Qufu Normal
University, Qufu  273165,  P.R.China; Department of Mathematics,
Nanjing University,  Nanjing 210093, P.R. China. E-mail:~{\tt
xywu@nju.edu.cn}} }
\begin{document}
\maketitle
\begin{abstract}
As is known that  various  dynamical systems including all
Hamiltonian systems preserve volume in phase space. This qualitative
geometrical property of the analytical solution should be respected
in the sense of Geometric Integration. This paper analyses the
volume-preserving property of
 exponential integrators in  different vector fields. We derive  a  necessary and sufficient condition
of volume preservation for exponential integrators, and with this
condition,   volume-preserving exponential integrators
 are analysed in detail for four kinds of vector fields.  It turns out
that symplectic exponential integrators can be   volume preserving
for a much larger class of vector fields than Hamiltonian systems.
 On the  basis  of the analysis,  novel
volume-preserving  exponential integrators  are derived for solving
highly oscillatory second-order systems and extended
Runge--Kutta--Nystr\"{o}m  (ERKN) integrators  of volume
preservation are presented for  separable partitioned  systems.
 Moreover,  the volume preservation of Runge--Kutta--Nystr\"{o}m
(RKN) methods is also discussed. Four illustrative numerical
experiments are carried out to demonstrate the  notable  superiority
of volume-preserving exponential integrators in comparison with
volume-preserving Runge-Kutta methods.
\medskip

\no{Keywords:}
 exponential integrators, volume preservation, geometric
 integrators, extended RKN integrators, highly oscillatory
 systems

\medskip
\no{MSC (2000):} 65L05, 65L06, 65L99,  34C60
\end{abstract}

\section{Introduction}\label{intro}
Geometric integrators (also called as structure-preserving
algorithms) have been an area of great interest and active research
in the recent  decades. The main advantage  of such methods for
solving
 ordinary differential equations (ODEs) is that they can  exactly preserve some qualitative geometrical
property of the analytical solution, such as the symplecticity,
energy preservation, and symmetry for  long-term integration.
Various geometric integrators have been designed and analysed
recently and   the reader is referred  to
\cite{Brugnano2014,Celledoni09,Cohen-2011,Hairer2010,hairer2000,Hairer16,Li_Wu(sci2016),McLachlan14,Sanz-Serna92,Wang2018-1}
for example on this topic.  For a good theoretical foundation of
geometric numerical integration for ODEs, we refer
 the reader to \cite{Feng-2010,hairer2006}.  Surveys of
structure-preserving algorithms for oscillatory  differential
equations are referred to \cite{wubook2018,wu2013-book}.

It is well known that volume preservation is an important property
shared by several dynamical systems. By the classical theorem due to
Liouville, it is clear  that  all Hamiltonian systems are also  of
volume preservation. Preservation of volume by a numerical method
has become  a desirable property and many methods have been proposed
and shown to be (or not to be) of volume preservation. We refer the
reader to
\cite{Chartier07,Feng95,He2015,Iserles07,McLachlan08,Quispel95,Zanna2013}
 and
references therein. From the researches of this topic, it follows
that all symplectic methods are of volume preservation for
Hamiltonian systems. However, this result does not hold  for the
system beyond Hamiltonian systems.  Feng and Shang have proved in
\cite {Feng95} that no Runge-Kutta (RK) method can be of volume
preservation even for linear divergence free vector fields. The
authors in \cite{Chartier07,Iserles07} showed that no B-series
method can be of volume preservation for all possible
divergence-free vector fields. Thus,  the design of efficient
volume-preserving (VP) methods  is still an open problem in  the
geometric numerical integration (see \cite{McLachlan98}). Recently,
various  VP  methods have been constructed and analysed for
different vector fields, such as splitting methods (see
\cite{McLachlan08,Zanna2013}), RK methods
 (see \cite{Webb2016}) and the methods  using generating  functions (see \cite{Quispel95,Zanna2014}).

On the other hand,  exponential integrators have been  proposed and
researched as an efficient approach to integrating ODEs/PDEs. The
reader is referred to
 \cite{Hochbruck2005,Hochbruck2010,Hochbruck2009,wang2017-JCM}
for example. However, it seems that  exponential integrators with
volume-preserving property for different vector fields have not been
researched yet  in the literature, which motives this paper.

In this paper, we   study the volume  preservation  of exponential
integrators when solving the following   first-order ODEs
\begin{equation}\label{IVP1}
y^{\prime}(t)=Ky(t)+g(y(t)):=f(y(t)),\quad
y(0)=y_{0}\in\mathbb{R}^{n},
\end{equation}
where $K$ is an $n \times n$   matrix which is assumed that
$\abs{e^{hK}}\neq-1$ for $0<h<1$, and $g:
\mathbb{R}^{n}\rightarrow\mathbb{R}^n$ is a differentiable nonlinear
function. In this paper, $\abs{\cdot}$ denotes the determinant.  The
function $f$ is assumed to be divergence free such that this system
is of volume preservation.  It is well known that the exact solution
of \eqref{IVP1} can be presented by the variation-of-constants
formula
  \begin{equation}
 y(t)= e^{tK}y_0+  t\int_{0}^1 e^{(1-\tau)tK}g(y(\tau t))d\tau.\\
\label{VOC}%
\end{equation}

 The  main  contributions of this paper are to derive the  volume-preserving condition  for
 exponential integrators   and  analyse  their  volume  preservations  for
different classes of vector fields which are larger   than
Hamiltonian systems.  Furthermore, based on the analysis,
volume-preserving  adapted exponential integrators are formulated
for highly oscillatory second-order systems and extended
Runge--Kutta--Nystr\"{o}m (ERKN) integrators  of volume preservation
are derived for  separable partitioned systems.  We also discuss the
volume preservation of  Runge--Kutta--Nystr\"{o}m (RKN) methods by
considering them as a special class of ERKN integrators. To our
knowledge, the results  presented in  this paper are novel that
rigorously study the
 volume-preserving properties of exponential integrators and ERKN/RKN integrators.

 We organise the remainder of this paper as follows. In
Section \ref{sec: methods}, the scheme of exponential integrators is
presented and   some useful results of these integrators are
summarised. Then   a necessary and sufficient condition for
exponential integrators to be of volume preservation is derived  in
Section \ref{sec: VPP}. On the basis of this condition, we study the
volume-preserving properties of exponential integrators for four
kinds of vector fields in Section \ref{sec: VPP for DVF}. Section
\ref{sec:applications} discusses the application  to various
problems including highly oscillatory second-order systems and
separable partitioned  systems, and shows the volume preservation of
adapted exponential integrators and ERKN/RKN integrators for these
different problems.  Four illustrative numerical experiments are
implemented in Section \ref{sec:examples} and the concluding remarks
are included in Section \ref{sec:conclusions}.

\section{Exponential integrators}\label{sec: methods}
In order to solve   \eqref{IVP1} effectively, one needs to
approximate the integral appearing in \eqref{VOC} by a quadrature
formula  with suitable nodes $c_1,c_2,\ldots,c_s$.  This leads to
the following exponential integrators proposed in
\cite{Hochbruck2010}, which have been successfully used for solving
 different kinds of ODEs/PDEs.
\begin{defi}
\label{scheme EI} (See \cite{Hochbruck2010}) An
$s$-stage exponential integrator for numerical integration of  \eqref{IVP1} is defined by%
\begin{equation}\label{EI}
\left\{\begin{array}[c]{ll} &k_i=e^{c_i
hK}y_n+h\sum\limits_{j=1}^{s}\bar{a}_{ij}(hK)g(k_{j}),\ \
i=1,2,\ldots,s,
\\
&y_{n+1}=e^{ hK}y_n+h\sum\limits_{i=1}^{s}\bar{b}_{i}(hK)g(k_{i}),
\end{array}\right.
\end{equation}
where   $h$ is  a  stepsize,  $c_i$ are real constants for $
i=1,\cdots,s$, $\bar{b}_{i}(hK)$ and $\bar{a}_{ij}(hK)$ are
matrix-valued functions of $hK$ for $i,j=1,\ldots,s$.
\end{defi}
The coefficients of  the   integrator  can be displayed in a Butcher
tableau  (omit     $(hK)$ for brevity):
\[%
\begin{tabular}
[c]{l}%
\\
\\[2mm]%
\begin{tabular}
[c]{c|c}%
$c$ & $\bar{A}$\\\hline &
$\raisebox{-1.3ex}[0pt]{$\bar{b}^{\intercal}$}$
\end{tabular}
$\ \quad=$ $\ $%
\end{tabular}%
\begin{tabular}
[c]{c|ccc}%
$c_{1}$ & $\bar{a}_{11}$ & $\ldots$ & $\bar{a}_{1s}$\\
$\vdots$ & $\vdots$ & $\ddots$ & $\vdots$\\
$c_{s}$ & $\bar{a}_{s1}$ & $\cdots$ & $\bar{a}_{ss}$\\\hline &
$\raisebox{-1.3ex}[0.5pt]{$\bar{b}_1$}$ &
\raisebox{-1.3ex}{$\cdots$} &
$\raisebox{-1.3ex}[0.5pt]{$\bar{b}_s$}$
\end{tabular}
\]
It is noted that when $K=\mathbf{0}$, this integrator reduces to a
classical $s$-stage RK   method represented by the Butcher tableau
\[%
\begin{tabular}
[c]{l}%
\\
\\[2mm]%
\begin{tabular}
[c]{c|c}%
$c$ & $A$\\\hline & $\raisebox{-1.3ex}[0pt]{$b^{\intercal}$}$
\end{tabular}
$\ \quad=$ $\ $%
\end{tabular}%
\begin{tabular}
[c]{c|ccc}%
$c_{1}$ & $a_{11}$ & $\ldots$ & $a_{1s}$\\
$\vdots$ & $\vdots$ & $\ddots$ & $\vdots$\\
$c_{s}$ & $a_{s1}$ & $\cdots$ & $a_{ss}$\\\hline &
$\raisebox{-1.3ex}[0.5pt]{$b_1$}$ & \raisebox{-1.3ex}{$\cdots$} &
$\raisebox{-1.3ex}[0.5pt]{$b_s$}$
\end{tabular}
\]

In this paper, a kind of special  and important  exponential
integrators will be considered and analysed, which was proposed in
\cite {Mei2017}.
\begin{defi}
\label{scheme EI spe} (See \cite {Mei2017}) Define a kind of
$s$-stage exponential integrators   by%
\begin{equation}
\begin{aligned} \bar{a}_{ij}(h K)=a_{ij}e^{(c_i-c_j) h K},\ \ \bar{b}_{i}(h K)=b_{i}e^{(1-c_i)h
K},\ \ i,j=1,\ldots,s,
\end{aligned}
\label{rev co}%
\end{equation}
where
\begin{equation}c=(c_1,\ldots,c_s)^{\intercal},\ \
b=(b_1,\ldots,b_s)^{\intercal},\ \
A=(a_{ij})_{s\times s}\label{RK co}%
\end{equation} are   the coefficients of an $s$-stage  RK
method.
\end{defi}

With regard to this kind of exponential integrators, two useful
properties are shown in \cite {Mei2017} and we  summarise them as
follows.

\begin{theo}
\label{order thm} (See \cite {Mei2017}) If  an  RK method with the
coefficients \eqref{RK co} is of order $p$, then the exponential
integrator given by \eqref{rev co} is also of order $p$.
\end{theo}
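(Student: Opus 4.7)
The plan is to identify the exponential integrator defined by \eqref{rev co} with the underlying RK method $(c,A,b)$ applied to a suitably transformed ODE, and then transfer the classical order result back to the original variable.

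First, I would introduce the local change of variables $z(t)=e^{-(t-t_n)K}y(t)$ on the interval $[t_n,t_n+h]$. A direct differentiation shows that $z(t_n)=y_n$ and that $z$ satisfies the non-autonomous problem
\begin{equation*}
z'(t)=\tilde g(t,z(t)),\qquad \tilde g(t,z):=e^{-(t-t_n)K}\,g\bigl(e^{(t-t_n)K}z\bigr),
\end{equation*}
so that the stiff linear part $Ky$ has been factored out. The exact flow of \eqref{IVP1} then reads $y(t_n+h)=e^{hK}z(t_n+h)$.

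Next I would apply the $s$-stage classical RK method with coefficients \eqref{RK co} to the transformed ODE, writing the internal stages $Z_i=z_n+h\sum_j a_{ij}\tilde g(t_n+c_j h,Z_j)$ and the update $z_{n+1}=z_n+h\sum_i b_i \tilde g(t_n+c_ih,Z_i)$. Setting $k_i:=e^{c_ihK}Z_i$ and $y_{n+1}:=e^{hK}z_{n+1}$, a direct matching using the relations $\bar a_{ij}(hK)=a_{ij}e^{(c_i-c_j)hK}$ and $\bar b_i(hK)=b_ie^{(1-c_i)hK}$ shows that $\{k_i,y_{n+1}\}$ coincides precisely with one step of the exponential integrator \eqref{EI}--\eqref{rev co} starting from $y_n$. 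Hence the exponential integrator is conjugate, via $y\mapsto e^{-(t-t_n)K}y$, to the RK method applied to $z'=\tilde g(t,z)$.

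Finally, by the hypothesis the RK method has classical order $p$, so applied to the (sufficiently smooth) non-autonomous system $z'=\tilde g(t,z)$ it yields a local truncation error $\|z_{n+1}-z(t_n+h)\|=O(h^{p+1})$. Multiplying by the uniformly bounded matrix $e^{hK}$ gives $\|y_{n+1}-y(t_n+h)\|=O(h^{p+1})$, which is the desired order-$p$ consistency; the usual Lady Windermere fan argument then promotes this to global order $p$.

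The main obstacle, as I see it, is the bookkeeping in the second step: one must verify carefully that inserting the exponential factors $e^{(c_i-c_j)hK}$ and $e^{(1-c_i)hK}$ in the correct positions exactly reproduces the RK stages of the transformed system and nothing more. A secondary point to check is the smoothness of $\tilde g$ with respect to $t$, which is inherited from the analyticity of the matrix exponential and the assumed differentiability of $g$, so that the non-autonomous RK order theory applies without modification.
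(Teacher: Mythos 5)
The paper does not actually prove this theorem: it is quoted from \cite{Mei2017} without argument, so there is no in-paper proof to compare against. Your proposal is nevertheless correct and is the standard way to establish the result: the integrator \eqref{rev co} is precisely the Lawson (integrating-factor) method associated with $(c,A,b)$, i.e.\ the RK method applied to the transformed variable $z(t)=e^{-(t-t_n)K}y(t)$, and your stage matching $k_i=e^{c_ihK}Z_i$, $y_{n+1}=e^{hK}z_{n+1}$ does reproduce \eqref{EI} with $\bar a_{ij}(hK)=a_{ij}e^{(c_i-c_j)hK}$ and $\bar b_i(hK)=b_ie^{(1-c_i)hK}$ exactly. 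Two small points deserve explicit mention. First, the transformed system $z'=\tilde g(t,z)$ is genuinely non-autonomous, so to invoke ``order $p$'' you need the RK method to have order $p$ for non-autonomous problems; this follows from the autonomous order together with the row-sum condition $c_i=\sum_j a_{ij}$, which is the usual convention (and holds for the Gauss methods used later in the paper) but should be stated, since the theorem as written only assumes ``order $p$.'' Second, the derivatives of $\tilde g$ with respect to $t$ bring in powers of $K$, so the error constants depend on $\norm{K}$; your argument therefore yields the classical (nonstiff) order $p$, which is all the theorem claims, but it is worth flagging that no uniformity in $hK$ is obtained. With those caveats made explicit, the proof is complete.
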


 \begin{theo}
\label{sympl thm}(See \cite {Mei2017}) The
 exponential integrator   defined by \eqref{rev co} is
 symplectic if the corresponding   RK method  \eqref{RK co}  is symplectic.
\end{theo}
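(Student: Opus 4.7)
The plan is to reduce the claim to the classical Sanz-Serna / Lasagni / Suris theorem that a symplectic Runge--Kutta method applied to a (possibly non-autonomous) Hamiltonian system produces a symplectic one-step map. The integrator \eqref{rev co} differs from the underlying RK method \eqref{RK co} only through conjugation by the linear flow $e^{tK}$, which is itself a symplectic map whenever the notion ``symplectic'' is meaningful for \eqref{IVP1}. Peeling off this conjugation should therefore expose a plain RK discretisation of a transformed, still Hamiltonian, system.

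First, I would introduce the shifted internal stages $Y_i := e^{-c_i h K} k_i$ and the shifted update $Y_{n+1} := e^{-h K} y_{n+1}$. Inserting these into \eqref{EI} with the coefficients \eqref{rev co} and multiplying the stage equation on the left by $e^{-c_i h K}$, the factors $e^{(c_i - c_j)hK}$ collapse. Writing $\widetilde g(t, Y) := e^{-tK} g(e^{tK} Y)$, one obtains
\begin{equation*}
Y_i = y_n + h\sum_{j=1}^{s} a_{ij}\,\widetilde g(c_j h, Y_j), \qquad
Y_{n+1} = y_n + h\sum_{i=1}^{s} b_i\,\widetilde g(c_i h, Y_i),
\end{equation*}
which is precisely the $s$-stage RK method \eqref{RK co} applied to the non-autonomous system $Y'(t) = \widetilde g(t, Y(t))$.

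Second, I would verify that $\widetilde g(t,\cdot)$ inherits Hamiltonian structure at every frozen $t$. Since the symplecticity context forces $K$ to be infinitesimally symplectic ($K^{\intercal} J + J K = 0$) and $g = J \nabla U$ for some potential $U$, the matrix $e^{tK}$ is symplectic and a short chain-rule computation gives
\begin{equation*}
\widetilde g(t, Y) = e^{-tK} J \nabla U(e^{tK} Y) = J \nabla_Y \bigl( U(e^{tK} Y) \bigr),
\end{equation*}
so $Y' = \widetilde g(t, Y)$ is a time-dependent Hamiltonian system with Hamiltonian $\widetilde H(t, Y) = U(e^{tK} Y)$. By hypothesis the underlying RK method is symplectic, hence the map $y_n \mapsto Y_{n+1}$ is symplectic, and composing on the left with the symplectic linear map $e^{hK}$ recovers $y_{n+1}$ and completes the argument.

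The main obstacle is really Step 2: checking that $\widetilde g(t,\cdot)$ is genuinely Hamiltonian at each frozen $t$ and invoking the non-autonomous version of the symplectic RK theorem correctly. The Step 1 algebra is essentially bookkeeping and Step 3 is immediate from the chain rule for symplectic maps. If one preferred to bypass the Hamiltonian language altogether, a direct alternative would be to differentiate \eqref{EI}--\eqref{rev co} in $y_n$, form $\Psi^{\intercal} J \Psi$ with $\Psi = \partial y_{n+1}/\partial y_n$, and exploit the algebraic identity $b_i a_{ij} + b_j a_{ji} - b_i b_j = 0$ together with the symplecticity of $e^{sK}$ to absorb the conjugation factors $e^{(c_i - c_j) h K}$ that distinguish the exponential integrator from its underlying RK scheme.
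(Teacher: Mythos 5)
This theorem is not proved in the paper at all: it is quoted verbatim from \cite{Mei2017}, so there is no in-paper argument to compare against. Your proposal is nevertheless a correct and essentially complete proof. The substitution $Y_i=e^{-c_ihK}k_i$, $Y_{n+1}=e^{-hK}y_{n+1}$ does collapse the factors $e^{(c_i-c_j)hK}$ and $e^{(1-c_i)hK}$ exactly as you claim, exhibiting \eqref{rev co} as a Lawson (integrating-factor) method, i.e.\ the underlying RK scheme \eqref{RK co} applied to the transformed non-autonomous system $Y'=\widetilde g(t,Y)$ with $\widetilde g(t,Y)=e^{-tK}g(e^{tK}Y)$. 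Your Step 2 is the only place where hypotheses beyond symplecticity of the RK method enter, and you identify them correctly: the statement is meaningful only when \eqref{IVP1} is Hamiltonian \emph{with the splitting respected}, i.e.\ $K=J^{-1}M$ with $M$ symmetric and $g=J^{-1}\nabla U$ (the paper's convention is $J^{-1}\nabla$ rather than $J\nabla$, which is immaterial); then $e^{tK}$ is symplectic, $e^{-tK}J^{-1}=J^{-1}(e^{tK})^{\intercal}$, and $\widetilde g(t,\cdot)$ is Hamiltonian for each frozen $t$. The non-autonomous version of the symplectic-RK theorem you invoke is unproblematic, since the quadratic-invariant proof ($\Psi^{\intercal}J\Psi$ is a quadratic first integral of the variational system, preserved under $b_ia_{ij}+b_ja_{ji}=b_ib_j$) is purely algebraic and indifferent to explicit time dependence; composing with the linear symplectic map $e^{hK}$ then finishes the argument. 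The cited source instead derives algebraic symplecticity conditions on the matrix-valued coefficients $\bar a_{ij}(hK)$, $\bar b_i(hK)$ and verifies them for \eqref{rev co} by direct computation --- essentially the ``direct alternative'' you sketch at the end. Your conjugation argument is shorter and explains \emph{why} the result holds; the direct computation generalises to exponential integrators that are not of Lawson type. The one point worth making explicit in a write-up is that the theorem tacitly assumes the linear part $Ky$ is itself Hamiltonian, since otherwise $e^{tK}$ fails to be symplectic and the conclusion breaks down.
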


In this paper, we  supplement  an additional requirement to $b$ of
\eqref{RK co}  and define the following special symplectic
exponential integrators (SSEI).
\begin{defi}
\label{scheme EI sspe}  An $s$-stage exponential integrator
\eqref{rev co} is called as special symplectic exponential
integrator (SSEI) if the RK method  \eqref{RK co}  is symplectic and
$b_j\neq0$ for all $j=1,\ldots,s.$
\end{defi}

\begin{rem}\label{rem00}
We note that a kind of special symplectic  RK (SSRK) methods was
considered in \cite{Webb2016} and our SSEI integrators  reduce to
the SSRK methods when $K=\mathbf{0}$.
\end{rem}

\section{VP condition of exponential integrators}\label{sec: VPP}

For each stepsize $h$, denote the $s$-stage exponential integrator
  \eqref{EI} by a map $\varphi_h: \mathbb{R}^{n}\rightarrow \mathbb{R}^{n}$,
which is
\begin{equation}\label{EI map}
\left\{\begin{array}[c]{ll} &\varphi_h(y)=e^{
hK}y+h\sum\limits_{i=1}^{s}\bar{b}_{i}(hK)g(k_{i}(y)),\\
&k_i(y)=e^{c_i
hK}y+h\sum\limits_{j=1}^{s}\bar{a}_{ij}(hK)g(k_{j}(y)),\ \
i=1,2,\ldots,s.
\end{array}\right.
\end{equation}
It is noted that $k_i(y)$ for $i=1,2,\ldots$ are identical to
$\varphi_{c_ih}(y)$ for $i=1,2,\ldots$ in \eqref{EI map}.

We firstly derive the Jacobian matrix and its determinant for the
general exponential integrator \eqref{EI map}.

\begin{lem}\label{lem jacob}
The Jacobian matrix of the exponential integrator \eqref{EI map} can
be expressed as
\begin{equation*}
\varphi'_h(y)=e^{ hK}+h \bar{b}^{\intercal} F(I_s\otimes I-h\bar{A}
F)^{-1} e^{chK},
\end{equation*}
where $F=\textmd{diag}(g'(k_1),\ldots,g'(k_s))$,  $I_s$ and $I$ are
the  $s\times s$ and $n\times n$ identity matrices, respectively,
and $e^{chK}= (e^{c_1hK},\ldots,e^{c_shK})^{\intercal}.$ Its
determinant reads
\begin{equation}\label{determinant}
\abs{\varphi'_h(y)}= \frac{\abs{e^{ hK}}\abs{I_s\otimes
I-h(\bar{A}-e^{(c- \textbf{1} )hK}\bar{b}^{\intercal})
F}}{\abs{I_s\otimes I-h\bar{A} F}},
\end{equation}
where $\textbf{1} $  is an $s\times1$ vector of  units  and
$e^{(c-\textbf{1})hK}=
(e^{(c_1-1)hK},\ldots,e^{(c_s-1)hK})^{\intercal}.$ Here we make use
of the Kronecker product $\otimes$ throughout this paper.
\end{lem}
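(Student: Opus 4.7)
The plan is to obtain the Jacobian by implicit differentiation of the stage equations, and then derive the determinant \eqref{determinant} by applying the Schur complement identity to a cleverly chosen block matrix.

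First I would stack the internal stages into a block column $\mathbf{k}(y)=(k_1(y),\ldots,k_s(y))^{\intercal}\in\mathbb{R}^{sn}$, so that the stage equations can be written compactly as $\mathbf{k}(y)=e^{chK}y+h\bar{A}\,G(\mathbf{k}(y))$ with $G(\mathbf{k})=(g(k_1),\ldots,g(k_s))^{\intercal}$. Differentiating in $y$ and using $G'(\mathbf{k})=\mathrm{diag}(g'(k_1),\ldots,g'(k_s))=F$ gives $\mathbf{k}'(y)=e^{chK}+h\bar{A}F\,\mathbf{k}'(y)$, whence $\mathbf{k}'(y)=(I_s\otimes I-h\bar{A}F)^{-1}e^{chK}$. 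Differentiating the update step then yields $\varphi'_h(y)=e^{hK}+h\bar{b}^{\intercal}F(I_s\otimes I-h\bar{A}F)^{-1}e^{chK}$, the first expression in the lemma.

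For the determinant I would introduce the $(s+1)n\times(s+1)n$ block matrix
\[
M_h:=\begin{pmatrix} I_s\otimes I-h\bar{A}F & -e^{chK}\\ h\bar{b}^{\intercal}F & e^{hK}\end{pmatrix}
\]
and evaluate $\det M_h$ by Schur complement in two ways. Eliminating the upper-left block (invertible by the standing well-posedness assumption on the stage equations) produces a Schur complement that is exactly $\varphi'_h(y)$, yielding $\det M_h=|I_s\otimes I-h\bar{A}F|\cdot|\varphi'_h(y)|$. Eliminating the lower-right block (invertible because $e^{hK}$ is) instead gives $\det M_h=|e^{hK}|\cdot|I_s\otimes I-h\bar{A}F+h\,e^{chK}(e^{hK})^{-1}\bar{b}^{\intercal}F|$, and the block-wise identity $e^{chK}(e^{hK})^{-1}=e^{(c-\mathbf{1})hK}$ converts this to $|e^{hK}|\cdot|I_s\otimes I-h(\bar{A}-e^{(c-\mathbf{1})hK}\bar{b}^{\intercal})F|$. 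Equating the two evaluations of $\det M_h$ and dividing by $|I_s\otimes I-h\bar{A}F|$ delivers \eqref{determinant}.

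No deep obstacle is expected. The only real bookkeeping is keeping track of the block and Kronecker sizes ($\bar{A}$ of size $sn\times sn$, $\bar{b}^{\intercal}$ of size $n\times sn$, $e^{chK}$ of size $sn\times n$, and $F$ block-diagonal of size $sn\times sn$) and verifying that both Schur complements are well defined, which reduces to invertibility of $e^{hK}$ and of $I_s\otimes I-h\bar{A}F$; the latter is simply the condition that the implicit stage equations have a unique solution.
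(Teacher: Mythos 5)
Your proposal is correct and follows essentially the same route as the paper: the Jacobian is obtained by implicit differentiation of the stacked stage equations, and the determinant identity \eqref{determinant} is derived from the two-way Schur-complement evaluation of the same $(s+1)n\times(s+1)n$ block matrix (the paper writes it as the block determinant identity $\abs{U}\abs{X-WU^{-1}V}=\abs{X}\abs{U-VX^{-1}W}$ with $X=e^{hK}$, $W=-h\bar{b}^{\intercal}F$, $U=I_s\otimes I-h\bar{A}F$, $V=e^{chK}$, which matches your $M_h$ up to the placement of the sign in the off-diagonal blocks). The commutation step $e^{c_ihK}e^{-hK}=e^{(c_i-1)hK}$ and the invertibility caveats you note are exactly those the paper relies on.
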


\begin{proof}
According to   the first formula of \eqref{EI map}, we obtain
\begin{equation}\label{compu dire}
\varphi'_h(y)=e^{ hK}+h\sum\limits_{i=1}^{s}\bar{b}_{i} g'(k_{i}(y))
k'_{i}(y)=e^{ hK}+h\bar{b}^{\intercal} F (k'_{1},
\ldots,k'_{s})^{\intercal}.
\end{equation}
 Likewise, it follows from  $k_i(y)$  in \eqref{EI map} that
\begin{equation*}
 \left(
   \begin{array}{cccc}
     I-h\bar{a}_{11} g'(k_{1}) & -h\bar{a}_{12} g'(k_{2}) &\cdots & -h\bar{a}_{1s} g'(k_{s}) \\
     -h\bar{a}_{21} g'(k_{1}) & I-h\bar{a}_{22} g'(k_{2}) & \cdots & -h\bar{a}_{2s} g'(k_{s}) \\
     \vdots & \vdots & \vdots & \vdots \\
     -h\bar{a}_{s1} g'(k_{1})  & -h\bar{a}_{s2} g'(k_{2})  & \cdots& I-h\bar{a}_{ss} g'(k_{s}) \\
   \end{array}
 \right)\left(
          \begin{array}{c}
            k'_{1} \\
            k'_{2} \\
            \vdots \\
            k'_{s} \\
          \end{array}
        \right)=e^{chK},
\end{equation*}
which can be rewritten as
\begin{equation}\label{ki}
(I_s\otimes I-h\bar{A} F)(k'_{1},
\ldots,k'_{s})^{\intercal}=e^{chK}.
\end{equation}
Substituting \eqref{ki} into \eqref{compu dire} yields the first
statement of this lemma.

For the second statement, we will use the following block
determinant identity (see \cite{Webb2016,hairer2006}):
\begin{equation*}
\abs{U}\abs{X-WU^{-1}V}=\abs{\begin{array}{cc}
                          U & V \\
                          W & X
                        \end{array}}=\abs{X}\abs{U-VX^{-1}W},
\end{equation*}
which is yielded from  Gaussian elimination.  By letting
$$X=e^{ hK},\ W=-h \bar{b}^{\intercal} F,\ U=I_s\otimes I-h\bar{A}
F,\ V= e^{chK},$$ it is obtained that
\begin{equation*}
 \begin{array}[c]{ll}  \abs{I_s\otimes
I-h\bar{A} F} \abs{\varphi'_h(y)}&=\abs{e^{ hK}}\abs{I_s\otimes
I-h\bar{A} F+he^{chK}e^{-hK} \bar{b}^{\intercal} F}\\
&=\abs{e^{ hK}}\abs{I_s\otimes I-h(\bar{A} -e^{(c- \textbf{1} )hK}
\bar{b}^{\intercal}) F},
\end{array}
\end{equation*}
which leads to the second result \eqref{determinant}. \hfill
\end{proof}

For the SSEI methods of Definition \ref{scheme EI sspe},  a
necessary and sufficient condition  for these integrators to be of
volume preservation is shown by the following lemma.

\begin{lem}\label{lem VP condition}
An $s$-stage SSEI method defined in Definition \ref{scheme EI sspe}
is of volume preservation if and only if the following VP condition
is satisfied
\begin{equation}\label{VP cond}
\abs{I_s\otimes I-h(A\otimes I .* E(hK)) F}=
\abs{e^{hK}}\abs{I_s\otimes I+h(A^{\intercal}\otimes I .* E(hK)) F}
,
\end{equation}
where $ E(hK)$ is a block matrix defined by
\begin{equation}\label{ehk} E(hK)=(E_{i,j}(hK))_{s\times s}=\left(
                                          \begin{array}{cccc}
                                            I & e^{(c_1-c_2)hK} & \cdots & e^{(c_1-c_s)hK} \\
                                            e^{(c_2-c_1)hK} & I & \cdots & e^{(c_2-c_s)hK} \\
                                            \vdots & \vdots & \ddots & \vdots \\
                                            e^{(c_s-c_1)hK} & e^{(c_s-c_2)hK} & \cdots & I \\
                                          \end{array}
                                        \right),
\end{equation}   and $.*$ denotes the element-wise multiplication of two matrices.
\end{lem}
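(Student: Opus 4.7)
The plan is to start from the determinant formula already derived in Lemma \ref{lem jacob} and show that, under the SSEI structure, the condition $|\varphi'_h(y)|=1$ (volume preservation) is exactly equivalent to the stated VP condition. First I would substitute $\bar{a}_{ij}(hK)=a_{ij}e^{(c_i-c_j)hK}$ and $\bar{b}_i(hK)=b_i e^{(1-c_i)hK}$ into the formula \eqref{determinant} to rewrite the two non-trivial determinants in block form. A direct computation of the outer product gives that the $(i,j)$-block of $e^{(c-\mathbf{1})hK}\bar{b}^{\intercal}$ is $b_j e^{(c_i-c_j)hK}$, so the $(i,j)$-block of $\bar{A}-e^{(c-\mathbf{1})hK}\bar{b}^{\intercal}$ equals $(a_{ij}-b_j)e^{(c_i-c_j)hK}$, and the $(i,j)$-block of $\bar{A}$ itself equals $a_{ij}e^{(c_i-c_j)hK}$. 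The latter is already the block $(A\otimes I .* E(hK))_{ij}$ appearing on the left of \eqref{VP cond}, so the denominator of \eqref{determinant} matches the left-hand side of the VP condition immediately.

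The work then lies in the numerator. The key algebraic step is to invoke the symplecticity of the underlying RK method, $b_i a_{ij}+b_j a_{ji}=b_i b_j$, to rewrite
\begin{equation*}
a_{ij}-b_j=-\frac{b_j}{b_i}a_{ji},
\end{equation*}
which is well defined precisely because Definition \ref{scheme EI sspe} requires $b_i\neq 0$. Consequently the $(i,j)$-block of $\bar{A}-e^{(c-\mathbf{1})hK}\bar{b}^{\intercal}$ becomes $-\frac{b_j}{b_i}a_{ji}e^{(c_i-c_j)hK}$, which is $-\frac{b_j}{b_i}$ times the $(i,j)$-block of $A^{\intercal}\otimes I .* E(hK)$. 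Setting $D_b=\mathrm{diag}(b_1,\ldots,b_s)$, this identifies $\bar{A}-e^{(c-\mathbf{1})hK}\bar{b}^{\intercal}$ as the diagonal similarity $-(D_b^{-1}\otimes I)\bigl(A^{\intercal}\otimes I .* E(hK)\bigr)(D_b\otimes I)$.

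Next I would exploit the fact that $D_b\otimes I$ is block-scalar while $F=\mathrm{diag}(g'(k_1),\ldots,g'(k_s))$ is block-diagonal, so the two commute. This lets me pull $D_b\otimes I$ through $F$ and factor
\begin{equation*}
I_s\otimes I-h\bigl(\bar{A}-e^{(c-\mathbf{1})hK}\bar{b}^{\intercal}\bigr)F=(D_b^{-1}\otimes I)\bigl[I_s\otimes I+h(A^{\intercal}\otimes I .* E(hK))F\bigr](D_b\otimes I),
\end{equation*}
whose determinant equals $|I_s\otimes I+h(A^{\intercal}\otimes I .* E(hK))F|$ since the conjugating factors cancel. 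Substituting this and the expression for $|I-h\bar{A}F|$ back into \eqref{determinant} shows that $|\varphi'_h(y)|=1$ is equivalent to the VP condition \eqref{VP cond}, yielding both directions of the iff. Since $f$ is divergence-free (so volume preservation amounts to $|\varphi'_h(y)|=1$ for every admissible $y$), this completes the characterization.

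The main obstacle I anticipate is the bookkeeping around the element-wise notation $.*$ combined with the Kronecker product: verifying that the block-entry identities truly correspond to the compact matrix form, and justifying carefully that the similarity transform commutes past $F$. The reliance on $b_j\neq 0$, inherited from Definition \ref{scheme EI sspe}, should be explicitly flagged, as it is what makes the entire diagonal-conjugation argument well posed.
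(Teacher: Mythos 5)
Your proposal is correct and follows essentially the same route as the paper: substitute the SSEI coefficients into the determinant formula of Lemma \ref{lem jacob}, identify $\bar{A}-e^{(c-\mathbf{1})hK}\bar{b}^{\intercal}$ blockwise, and use the symplecticity relation $b_ia_{ij}+b_ja_{ji}=b_ib_j$ together with the diagonal similarity by $\mathrm{diag}(b_1,\ldots,b_s)\otimes I$ (which commutes with $F$) to turn the numerator into $\abs{I_s\otimes I+h(A^{\intercal}\otimes I .* E(hK))F}$. The paper performs the same conjugation in matrix form via $B(A-\mathbf{1}b^{\intercal})B^{-1}=-A^{\intercal}$, whereas you do it entrywise; the two are identical in substance, including the essential role of $b_j\neq 0$.
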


\begin{proof}In terms of the choice \eqref{rev co} of the coefficients,
 it is computed that
\begin{equation*}
 \begin{array}[c]{ll}  &\bar{A}-e^{(c- \textbf{1}
 )hK}\bar{b}^{\intercal}\\=&(A\otimes I).*E(hK)-(e^{(c_1-1)hK},\ldots,e^{(c_s-1)hK})^{\intercal}
 (b_{1}e^{(1-c_1)h
K},\ldots,b_{s}e^{(1-c_s)h K})\\
=&(A\otimes I).*E(hK)-(\textbf{1} b^{\intercal}\otimes
I).*E(hK)\\
=&(A -\textbf{1} b^{\intercal})\otimes I.*E(hK).
\end{array}
\end{equation*}
Thus, we obtain
\begin{equation}\label{determinant-1}
\abs{\varphi'_h(y)}= \frac{\abs{e^{ hK}}\abs{I_s\otimes I- h(A
-\textbf{1} b^{\intercal})\otimes I.*E(hK) F}}{\abs{I_s\otimes
I-h\bar{A} F}}.
\end{equation}
Moreover, with careful calculations, it can be verified that for
$B=\textmd{diag}(b_1,\ldots,b_s)$, the following result holds
\begin{equation}\label{con1-1}
 \begin{array}[c]{ll}  &\abs{I_s\otimes I-
h(A -\textbf{1} b^{\intercal})\otimes I.*E(hK) F}\\
=& \abs{I_s\otimes I- h(B\otimes I)(A -\textbf{1}
b^{\intercal})\otimes I.*E(hK) F(B^{-1}\otimes I)}\\
=&\abs{I_s\otimes I- h(B\otimes I)(A -\textbf{1}
b^{\intercal})\otimes I.*E(hK) (B^{-1}\otimes I)F}\\
=&\abs{I_s\otimes I- hB(A -\textbf{1} b^{\intercal})B^{-1}\otimes
I.*E(hK)  F}.
\end{array}
\end{equation}
Since  the RK method is symplectic, one has that
$BA+A^{\intercal}B-bb^{\intercal}=0$ (see \cite {hairer2006}), which
leads to $B(A -\textbf{1} b^{\intercal})B^{-1}=-A^{\intercal}.$
Therefore, the result \eqref{con1-1} can be simplified as
\begin{equation*}
 \begin{array}[c]{ll}  \abs{I_s\otimes I-
h(A -\textbf{1} b^{\intercal})\otimes I.*E(hK) F} =\abs{I_s\otimes
I+ h(A^{\intercal}\otimes I.*E(hK))  F}.
\end{array}
\end{equation*}
The proof is complete by considering \eqref{determinant-1}. \hfill
\end{proof}

\begin{rem}
It is noted that when $K=\mathbf{0}$, this VP condition \eqref{VP
cond} reduces  to the condition of  RK   methods presented in
\cite{Webb2016}.  Consequently, the condition \eqref{VP cond} can be
regarded as a generalisation of that of RK   methods.
\end{rem}

\section{VP results for different vector fields}\label{sec: VPP for DVF}
In this section, we study the volume-preserving properties of
exponential integrators for four kinds of vector fields, which are
defined as follows.
\begin{defi}
\label{defi H} (See \cite{Webb2016}) Define the following four
classes of vector fields on Euclidean space using vector fields
$f(y)$
\begin{equation*} \begin{array}[c]{ll}
\mathcal{H}=&\{f|\ \textmd{there exists}\ P\ \textmd{such
that for all}\ y, Pf'(y)P^{-1}=-f'(y)^{\intercal}\},\\
\mathcal{S}=&\{f|\ \textmd{there exists}\ P\ \textmd{such
that for all}\ y, Pf'(y)P^{-1}=-f'(y)\},\\
\mathcal{F}^{(\infty)}=&\{f(y_1,y_2)=(u(y_1),v(y_1,y_2))^{\intercal}\
\textmd{where}\ u\in \mathcal{H} \cup \mathcal{F}^{(\infty)}|\
\textmd{there exists}\ P\\
&\ \   \textmd{such
that for all}\ y_1,y_2, P\partial_{y_2}v(y_1,y_2)P^{-1}=-\partial_{y_2}v(y_1,y_2)^{\intercal}\},\\
\mathcal{F}^{(2)}=&\{f(y_1,y_2)=(u(y_1),v(y_1,y_2))^{\intercal}\
\textmd{where}\ u\in \mathcal{H} \cup \mathcal{S} \cup
\mathcal{F}^{(2)}|\
\textmd{there exists}\ P\\
&\ \  \textmd{such that for all}\ y_1,y_2,\ \textmd{either}\
P\partial_{y_2}v(y_1,y_2)P^{-1}=-\partial_{y_2}v(y_1,y_2)^{\intercal},\\
&\ \ \textmd{or}\
P\partial_{y_2}v(y_1,y_2)P^{-1}=-\partial_{y_2}v(y_1,y_2)\}.
\end{array}\end{equation*}
\end{defi}

\begin{rem}\label{rem1}  It has been proved in \cite{Webb2016}  that all these sets are  equal to divergence free
vector fields.  The relationships of these vector fields are also
given in \cite{Webb2016} by $\mathcal{H}\subset
\mathcal{F}^{(\infty)}\subset \mathcal{F}^{(2)}$ and
$\mathcal{S}\subset \mathcal{F}^{(\infty)}\subset
\mathcal{F}^{(2)}.$ According to Lemma 3.2 of \cite{Webb2016}, we
know that the set $\mathcal{H}$ contains all Hamiltonian systems.
Denote by $H$ the set of  Hamiltonian systems. See Figure \ref{p000}
for the venn diagram illusting the relationships. This figure
clearly shows that the  sets $\mathcal{H},\ \mathcal{F}^{(\infty)}$
and $\mathcal{F}^{(2)}$  are larger classes of vector fields than
Hamiltonian systems.
 \begin{figure}[ptb]
\centering
\includegraphics[width=8cm,height=3.5cm]{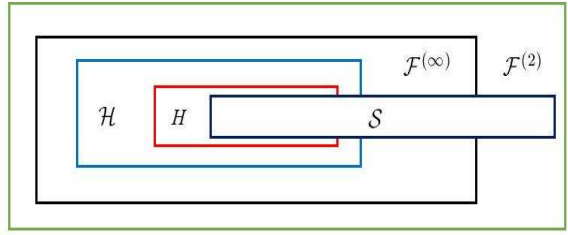}
\caption{Venn diagram illusting the relationships.} \label{p000}
\end{figure}
\end{rem}

\subsection{Vector field $\mathcal{H}$}\label{subsec: VPP for H}

\begin{theo} \label{thm vp for H} All SSEI methods for solving \eqref{IVP1} are of volume preservation
for vector fields $f$ and $g$ in $\mathcal{H}$ with the same $P$.
\end{theo}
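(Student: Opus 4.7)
The plan is to verify the VP condition \eqref{VP cond} of Lemma \ref{lem VP condition} directly, by constructing a similarity transformation induced by $P$ that converts its left-hand side into its right-hand side.

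First, I would extract from the hypothesis $f,g\in\mathcal{H}$ with the common matrix $P$ three consequences:
\begin{equation*}
Pg'(y)P^{-1}=-g'(y)^{\intercal},\qquad PKP^{-1}=-K^{\intercal},\qquad Pe^{hK}P^{-1}=(e^{-hK})^{\intercal}.
\end{equation*}
The first is the defining property of $\mathcal{H}$ applied to $g$; the second follows by subtracting $g'(y)$ from $f'(y)=K+g'(y)$; the third follows from the first two via the power-series expansion of the exponential. An immediate consequence is $\mathrm{tr}(K)=0$, whence $\abs{e^{hK}}=e^{h\,\mathrm{tr}(K)}=1$; this is the piece that will absorb the factor $\abs{e^{hK}}$ in \eqref{VP cond}.

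Second, setting $\mathcal{P}:=I_s\otimes P$, I would establish two block-level conjugation identities:
\begin{equation*}
\mathcal{P}F\mathcal{P}^{-1}=-F^{\intercal},\qquad \mathcal{P}\bigl((A\otimes I).*E(hK)\bigr)\mathcal{P}^{-1}=\bigl((A^{\intercal}\otimes I).*E(hK)\bigr)^{\intercal}.
\end{equation*}
The first follows blockwise from $Pg'(k_i)P^{-1}=-g'(k_i)^{\intercal}$. For the second, the $(i,j)$-block of the left side equals $a_{ij}(e^{(c_j-c_i)hK})^{\intercal}$, while the $(i,j)$-block of the right side equals the transpose of the $(j,i)$-block of $(A^{\intercal}\otimes I).*E(hK)$, which is $(a_{ij}e^{(c_j-c_i)hK})^{\intercal}$; the two match.

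Third, I would combine the two identities to obtain
\begin{equation*}
\mathcal{P}\bigl(I_s\otimes I-h(A\otimes I).*E(hK)\,F\bigr)\mathcal{P}^{-1}=\bigl(I_s\otimes I+hF\,(A^{\intercal}\otimes I).*E(hK)\bigr)^{\intercal}.
\end{equation*}
Taking determinants (which are invariant under both similarity and transpose) and then applying the identity $\abs{I+XY}=\abs{I+YX}$ gives
\begin{equation*}
\abs{I_s\otimes I-h(A\otimes I).*E(hK)\,F}=\abs{I_s\otimes I+h(A^{\intercal}\otimes I).*E(hK)\,F},
\end{equation*}
and multiplying the right-hand side by $\abs{e^{hK}}=1$ recovers exactly \eqref{VP cond}, so the SSEI method is volume preserving.

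The main obstacle will be the block-level bookkeeping in the second step: one must carefully distinguish between the matrix transpose of a block matrix (which both transposes each block and swaps their block positions) and the relations produced by conjugating each individual block $e^{(c_i-c_j)hK}$ by $P$. Once this identification is handled correctly, both the symplectic RK structure already baked into \eqref{VP cond} and the divergence-free trace condition giving $\abs{e^{hK}}=1$ slot in routinely.
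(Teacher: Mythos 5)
Your proposal is correct and follows essentially the same route as the paper: conjugation of the left-hand side of \eqref{VP cond} by $I_s\otimes P$, the blockwise identity $E(-hK^{\intercal})^{\intercal}=E(hK)$, a transpose, and Sylvester's determinant identity $\abs{I+XY}=\abs{I+YX}$. The only (minor) divergence is that you obtain $\abs{e^{hK}}=1$ from $\mathrm{tr}(K)=0$, which is slightly cleaner than the paper's argument via $\abs{e^{hK}}=1/\abs{e^{hK}}$ combined with the standing assumption $\abs{e^{hK}}\neq-1$.
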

\begin{proof}
Let  $P$ be such that for all $y$, $Pf'(y)P^{-1}=-f'(y)^{\intercal}$
and $Pg'(y)P^{-1}=-g'(y)^{\intercal}$. According to this condition
and the expression $f(y)=Ky+g(y)$, one has that
$PKP^{-1}=-K^{\intercal}$. Thus it is easily obtained that
$Pe^{hK}P^{-1}=e^{-hK^{\intercal}}$. In the light of this result,
one gets
\begin{equation*}
 \begin{array}[c]{ll}
 &\abs{Pe^{hK}P^{-1}}=\abs{e^{hK}}=\abs{e^{-hK^{\intercal}}}=\abs{e^{-hK}}=\abs{(e^{hK})^{-1}}=\frac{1}{\abs{e^{hK}}},
\end{array}
\end{equation*}
which yields $\abs{e^{hK}}=1$ (it is assumed that
$\abs{e^{hK}}\neq-1$ in the introduction of this paper). We then
compute the  left-hand  side of \eqref{VP cond} as follows
\begin{equation*}
 \begin{array}[c]{ll}  &\abs{I_s\otimes I-h(A\otimes I .* E(hK)) F}\\
=&\abs{(I_s\otimes P)(I_s\otimes P^{-1})-h(I_s\otimes P)(A\otimes I .* E(hK)) (I_s\otimes P^{-1})(I_s\otimes P)F(I_s\otimes P^{-1})}\\
=&\abs{I_s\otimes I+h (I_s\otimes P)(A\otimes I .* E(hK)) (I_s\otimes P^{-1})  F^{\intercal}}\\
=&\abs{I_s\otimes I+h  (A\otimes I .* E(-hK^{\intercal}))
F^{\intercal}}\\
=&\abs{I_s\otimes I+h  F(A^{\intercal}\otimes I .* E(-hK^{\intercal})^{\intercal}) }\ \textmd{(transpose)}\\
=&\abs{I_s\otimes I+h  (A^{\intercal}\otimes I .*
E(-hK^{\intercal})^{\intercal}) F}\ \textmd{(Sylvester's law)}.
\end{array}
\end{equation*}
It follows from the definition \eqref{ehk} that
\begin{equation}\label{E-hk}
 \begin{array}[c]{ll}  &E(-hK^{\intercal})^{\intercal}=(E_{i,j}(-hK^{\intercal}))_{s\times
s}^{\intercal}=(E_{j,i}(-hK))_{s\times s}=(E_{i,j}(hK))_{s\times
s}=E(hK).
\end{array}
\end{equation}
Therefore, one arrives at
\begin{equation*}\abs{I_s\otimes I-h(A\otimes I .* E(hK)) F} =\abs{I_s\otimes I+h  (A^{\intercal}\otimes I .*
E(hK)) F},
\end{equation*}
which shows  that all  SSEI methods are of volume preservation for
vector fields in $\mathcal{H}$ by considering Lemma \ref{lem VP
condition}.\hfill
\end{proof}

\subsection{Vector
field $\mathcal{S}$}\label{subsec: VPP for S}
\begin{theo} \label{thm vp for S} All one-stage SSEI methods
and all two-stage SSEI methods with $c_1=c_2$ (and any composition
of such methods)  are of volume preservation for vector fields $f$
and $g$ in $\mathcal{S}$ with the same $P$.
\end{theo}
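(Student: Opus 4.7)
The plan is to verify the VP condition \eqref{VP cond} from Lemma \ref{lem VP condition} by exploiting the defining relation $Pg'(y)P^{-1}=-g'(y)$ of $\mathcal{S}$ via conjugation by $I_s\otimes P$; because this relation lacks a transposition (in contrast with $\mathcal{H}$), the reduction will leave a remainder that only vanishes for $s=1$ and for $s=2$ with $c_1=c_2$.

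First I would extract the basic consequences of $f,g\in\mathcal{S}$ sharing the same $P$. Writing $f=Ky+g$ and differentiating gives $PKP^{-1}=-K$, so $Pe^{hK}P^{-1}=e^{-hK}$ and, exactly as in the proof of Theorem \ref{thm vp for H}, $|e^{hK}|=1/|e^{hK}|$, forcing $|e^{hK}|=1$. Two further identities do most of the work: $PE_{i,j}(hK)P^{-1}=e^{-(c_i-c_j)hK}=E_{j,i}(hK)$ and $(I_s\otimes P)F(I_s\otimes P^{-1})=-F$. Applying the similarity $I_s\otimes P$ to the left-hand side of \eqref{VP cond} and collecting signs yields
\begin{equation*}
|I_s\otimes I-h(A\otimes I.*E(hK))F|=|I_s\otimes I+h(A\otimes I.*\widetilde E(hK))F|,
\end{equation*}
where $\widetilde E(hK)$ denotes the ``block-transpose'' of $E(hK)$, whose $(i,j)$-block is $E_{j,i}(hK)$. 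Since $|e^{hK}|=1$, the VP condition then reduces to establishing the identity
\begin{equation*}
|I_s\otimes I+h(A\otimes I.*\widetilde E(hK))F|=|I_s\otimes I+h(A^{\intercal}\otimes I.*E(hK))F|.
\end{equation*}

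This last identity is the main technical point, and it is where the restrictions on $s$ and $c$ enter. For $s=1$ it is immediate since $A$ is a scalar and both $E(hK)$ and $\widetilde E(hK)$ equal $I$. For $s=2$ with $c_1=c_2$, every block of $E(hK)$ and of $\widetilde E(hK)$ equals $I$, so the claim reduces to $|I_2\otimes I+h(A\otimes I)F|=|I_2\otimes I+h(A^{\intercal}\otimes I)F|$. Here I plan to use the block Schur complement identity already invoked in the proof of Lemma \ref{lem jacob} to factor each side as $|I+h a_{22}g'(k_2)|$ times the determinant of an $n\times n$ Schur complement; the off-diagonal data will enter that Schur complement only through the scalar product $a_{12}a_{21}$, which is invariant under $A\mapsto A^{\intercal}$, so the two determinants agree even though $g'(k_1)$ and $g'(k_2)$ need not commute. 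The hardest part will be carrying out this $2\times 2$ block computation cleanly in the non-commutative setting and verifying that the two Schur complements are literally the same matrix. Finally, the clause about compositions is standard: if $\varphi^{(1)}$ and $\varphi^{(2)}$ are both volume preserving, the chain rule gives $|(\varphi^{(1)}\circ\varphi^{(2)})'(y)|=|\varphi^{(1)\prime}(\varphi^{(2)}(y))|\cdot|\varphi^{(2)\prime}(y)|=1$.
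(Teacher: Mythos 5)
Your proposal is correct, and its skeleton matches the paper's: derive $PKP^{-1}=-K$ and $\abs{e^{hK}}=1$, conjugate by $P$ to flip the sign of every $g'(k_i)$, and observe that the conjugation also reverses each exponential factor $e^{(c_i-c_j)hK}\mapsto e^{(c_j-c_i)hK}$, which is exactly the obstruction that $c_1=c_2$ removes. Where you genuinely diverge is in the execution of the two-stage determinant identity. The paper first expands each $2\times 2$ block determinant into a single $n\times n$ determinant of the form $\abs{UX-VW}$ (its condition \eqref{resul 2}) and then conjugates that expression; this expansion is only licensed when the lower blocks commute, a point the paper passes over. Your route instead conjugates at the block level, reduces (via $c_1=c_2$) to the clean statement $\abs{I_2\otimes I+h(A\otimes I)F}=\abs{I_2\otimes I+h(A^{\intercal}\otimes I)F}$, and settles it with the Schur-complement factorization $\abs{X}\,\abs{U-VX^{-1}W}$, in which the off-diagonal entries appear only through the scalar $a_{12}a_{21}$. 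That factorization is valid with no commutativity hypothesis (modulo the routine remark that $I+ha_{22}g'(k_2)$ is invertible for small $h$ and both sides are polynomial in $h$, so the identity extends), so your argument is, if anything, tighter at the one delicate step; it also makes transparent why only $s\le 2$ works, and you supply the composition clause explicitly, which the paper leaves implicit.
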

\begin{proof} Let  $P$ be such that for all $y$,
$Pf'(y)P^{-1}=-f'(y)$  and $Pg'(y)P^{-1}=-g'(y)$. Similarly to the
proof of last theorem, we obtain that $PKP^{-1}=-K$. Thus it is true
that $Pe^{hK}P^{-1}=e^{-hK}$ and $\abs{e^{hK}}=1$.

For the one-stage SSEI, according to Lemma \ref{lem VP condition},
it is of volume preservation if and only if
$$
\abs{ I-ha_{11} g'(k_{1}) }= \abs{
        I+ha_{11} g'(k_{1})},
$$
which  can be  verified  by considering
\begin{equation*}
  \abs{ I-ha_{11} g'(k_{1}) }
 =\abs{  PP^{-1}-ha_{11} Pg'(k_{1})P^{-1}}
 =\abs{  I+ha_{11}  g'(k_{1}) }.
\end{equation*}

For a two-stage SSEI method,  according to Lemma \ref{lem VP
condition} again, this two-stage SSEI method is of volume
preservation if and only if
\begin{equation*} \begin{array}[c]{ll}
&\abs{ \begin{array}{cc}
        I-ha_{11} g'(k_{1}) & -ha_{12}e^{(c_1-c_2)hK} g'(k_{2}) \\
        -ha_{21}e^{(c_2-c_1)hK} g'(k_{1}) & I-ha_{22}  g'(k_{2})
      \end{array}
}\\
= &\abs{\begin{array}{cc}
        I+ha_{11} g'(k_{1}) & ha_{21}e^{(c_1-c_2)hK}  g'(k_{2}) \\
        ha_{12}e^{(c_2-c_1)hK} g'(k_{1}) & I+ha_{22} g'(k_{2})
      \end{array}}  ,
\end{array}\end{equation*}
which gives the condition
\begin{equation}\label{resul 2}
 \begin{array}[c]{ll}  & |I-ha_{11} g'(k_{1})-ha_{22}  g'(k_{2})+h^2a_{11}a_{22} g'(k_{1}) g'(k_{2})\\
 &-h^2a_{12}a_{21}e^{(c_1-c_2)hK} g'(k_{2})e^{(c_2-c_1)hK}
 g'(k_{1})|\\
 =&|I+ha_{11} g'(k_{1})+ha_{22}  g'(k_{2})+h^2a_{11}a_{22} g'(k_{1}) g'(k_{2})\\
 &-h^2a_{12}a_{21}e^{(c_1-c_2)hK} g'(k_{2})e^{(c_2-c_1)hK}
 g'(k_{1})|.
\end{array}
\end{equation}
It  also can be verified  that
\begin{equation*}
 \begin{array}[c]{ll}  & \textmd{the left hand side of}\ \eqref{resul 2}\\
 =& \mid PP^{-1}-ha_{11} Pg'(k_{1})P^{-1}-ha_{22} P g'(k_{2})P^{-1}+h^2a_{11}a_{22} Pg'(k_{1}) g'(k_{2})P^{-1}\\
 &\ \ -h^2a_{12}a_{21}Pe^{(c_1-c_2)hK} g'(k_{2})e^{(c_2-c_1)hK}
 g'(k_{1})P^{-1}\mid\\
  =& \mid I+ha_{11} g'(k_{1})+ha_{22}   g'(k_{2}) +h^2a_{11}a_{22} Pg'(k_{1})P^{-1} P g'(k_{2})P^{-1}\\
 &\ \ -h^2a_{12}a_{21}Pe^{(c_1-c_2)hK} P^{-1} Pg'(k_{2})P^{-1} Pe^{(c_2-c_1)hK}P^{-1} P
 g'(k_{1})P^{-1}\mid\\
   =& \mid I+ha_{11} g'(k_{1})+ha_{22}   g'(k_{2}) +h^2a_{11}a_{22} g'(k_{1}) g'(k_{2})\\
 &\ \ -h^2a_{12}a_{21}e^{(c_2-c_1)hK} g'(k_{2})   e^{(c_1-c_2)hK}
 g'(k_{1})\mid.
\end{array}
\end{equation*}
Under the assumption  that $c_1=c_2$, this result becomes
\begin{equation*}
 \begin{array}[c]{ll}  &   \mid I+ha_{11} g'(k_{1})+ha_{22}   g'(k_{2}) +h^2a_{11}a_{22} g'(k_{1}) g'(k_{2})\\
 &\ \ -h^2a_{12}a_{21}e^{(c_1-c_2)hK}  g'(k_{2}) e^{(c_2-c_1)hK}
 g'(k_{1})\mid.
\end{array}
\end{equation*}
Thus   \eqref{resul 2} is obtained immediately and then all
two-stage SSEI methods with $c_1=c_2$ are of volume
preservation.\hfill
\end{proof}

 \begin{rem} It is noted that for  the vector field
$\mathcal{S}$ and two-stage SSEI methods, the condition
\begin{equation}\label{cc}c_1=c_2\end{equation}   is supplemented in order to make the
following condition be true
\begin{equation*}
e^{(c_2-c_1)hK} g'(k_{2})   e^{(c_1-c_2)hK}
 g'(k_{1})=e^{(c_1-c_2)hK}  g'(k_{2}) e^{(c_2-c_1)hK}
 g'(k_{1}).
\end{equation*}
We remark that   condition \eqref{cc} is not necessary for    some
  special matrix $K$ such as $K=0$ or some special function $g$ such as scalar functions.
  The same situation happens in the analysis of Subsection \ref{subsec: VPP for F2}.
 \end{rem}

\subsection{Vector
field $\mathcal{F}^{(\infty)}$}\label{subsec: VPP for FINFI} For the
vector field $\mathcal{F}^{(\infty)}$, if the function
$f(y):=Ky+g(y)$ has the  pattern  $(u(y_1),v(y_1,y_2))^{\intercal}$,
this means that $K$ and $g$ can be expressed in blocks as
 \begin{equation}\label{KG}
\begin{array}
[c]{ll}&K=\left(
      \begin{array}{cc}
        K_{11} & 0 \\
       0 & K_{22}  \\
      \end{array}
    \right),\ \ g(y)=\left(
                       \begin{array}{c}
                         g_1(y_1) \\
                         g_2(y_1,y_2) \\
                       \end{array}
                     \right).
\end{array}
\end{equation}
Then the following relation is true
 \begin{equation}\label{KG-2}
  u(y_1)=K_{11}y_1+g_1(y_1),\ \
                     v(y_1,y_2)=K_{22}y_2+g_2(y_1,y_2).
\end{equation}

\begin{theo} \label{thm vp for FIN} Consider an $s$-stage SSEI method for solving $y_1'= u(y_1)$ that is
of volume preservation for the vector field $u(y_1):
\mathbb{R}^m\rightarrow \mathbb{R}^m$.   Let $v(y_1,y_2):
\mathbb{R}^{m+n}\rightarrow \mathbb{R}^{m+n}$  and assume that there
exists an invertible matrix $P$ such that for all $y_1,y_2,$
$$P\partial_{y_2}v(y_1,y_2)P^{-1}=-\partial_{y_2}v(y_1,y_2)^{\intercal},\ \ P\partial_{y_2}g_2(y_1,y_2)P^{-1}=-\partial_{y_2}g_2(y_1,y_2)^{\intercal}.$$
Then the SSEI method is of volume preservation for vector fields
$f(y_1,y_2)=(u(y_1),v(y_1,y_2))^{\intercal}$ in
$\mathcal{F}^{(\infty)}.$
\end{theo}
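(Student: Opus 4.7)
The plan is to exploit the block structure \eqref{KG} of $K$ and $g$ to split the SSEI map $\varphi_h$ into a $y_1$-subsystem, for which volume preservation is already assumed, and a $y_2$-subsystem that falls under the $\mathcal{H}$-type argument of Theorem \ref{thm vp for H}. Since $K$ is block-diagonal and $g_1$ depends only on $y_1$, the stage equations of \eqref{rev co} decouple in their $y_1$-components: each $k_i^{(1)}$ satisfies exactly the SSEI stage equation for $y_1'=u(y_1)$. Writing $\varphi_h(y)=(\varphi_h^{(1)}(y_1),\varphi_h^{(2)}(y_1,y_2))^{\intercal}$, the Jacobian is therefore block lower-triangular, so
\begin{equation*}
\abs{\varphi'_h(y)}=\abs{\partial_{y_1}\varphi_h^{(1)}}\cdot\abs{\partial_{y_2}\varphi_h^{(2)}},
\end{equation*}
and the first factor equals $1$ by the standing hypothesis.

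To handle the second factor, I would first subtract the two assumed conjugation relations for $\partial_{y_2}v$ and $\partial_{y_2}g_2$, using $\partial_{y_2}v-\partial_{y_2}g_2=K_{22}$, to obtain $PK_{22}P^{-1}=-K_{22}^{\intercal}$ and hence $\abs{e^{hK_{22}}}=1$, as in the proof of Theorem \ref{thm vp for H}. Differentiating the stage equations for $k_i^{(2)}$ in $y_2$ and using that $k_j^{(1)}$ is $y_2$-independent yields exactly the linear system of Lemma \ref{lem jacob}, but with $K_{22}$ in place of $K$ and
\begin{equation*}
\tilde F:=\textmd{diag}\bigl(\partial_{y_2}g_2(k_1^{(1)},k_1^{(2)}),\ldots,\partial_{y_2}g_2(k_s^{(1)},k_s^{(2)})\bigr)
\end{equation*}
in place of $F$. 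Consequently, by Lemma \ref{lem VP condition}, $\abs{\partial_{y_2}\varphi_h^{(2)}}=1$ reduces to verifying the VP identity \eqref{VP cond} for the data $(K_{22},\tilde F)$. I would then run the computation in the proof of Theorem \ref{thm vp for H} verbatim on this identity: conjugate by $I_s\otimes P$, transpose, invoke \eqref{E-hk}, and close with Sylvester's determinant law.

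The main obstacle is this reduction step: justifying that the analysis of $\partial_{y_2}\varphi_h^{(2)}$ genuinely matches an instance of Lemma \ref{lem jacob} for a \emph{bona fide} SSEI applied to the $y_2$-subsystem, despite the diagonal blocks of $\tilde F$ being evaluated at distinct stage values $(k_j^{(1)},k_j^{(2)})$. This dissolves upon noting that Lemmas \ref{lem jacob} and \ref{lem VP condition} only use the block-diagonal shape of $F$ and nowhere require its blocks to coincide or be related, so $\tilde F$ is an admissible substitution and the $\mathcal{H}$-type determinant computation from Theorem \ref{thm vp for H} transfers without modification.
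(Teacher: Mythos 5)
Your proposal is correct and follows essentially the same route as the paper: both exploit the block-diagonal structure of $K$ and the block-triangular Jacobian of $g$ to factor the relevant determinant into a $y_1$-block (handled by the volume-preservation hypothesis for $u$, valid because the $y_1$-stages decouple) and a $y_2$-block (handled by the $P$-conjugation, transpose, property \eqref{E-hk}, and Sylvester's law exactly as in Theorem \ref{thm vp for H}). The only cosmetic difference is that you factor $\abs{\varphi_h'}$ directly while the paper factors the two determinants in the VP condition \eqref{VP cond}, which is equivalent via Lemma \ref{lem VP condition}.
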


\begin{proof}
From the property of $v$, we have
$PK_{22}P^{-1}=-K_{22}^{\intercal}$ and $\abs{e^{hK_{22}}}=1$. Thus
$\abs{e^{hK}}=\abs{e^{hK_{11}}}\abs{e^{hK_{22}}}=\abs{e^{hK_{11}}}.$
The Jacobian matrix of $g(y)$ is block triangular as follows
$$g'(y_1,y_2)=\left(
                \begin{array}{cc}
                  \partial _{y_1}g_1(y_1) & 0 \\
                  * & \partial _{y_2}g_2(y_1,y_2) \\
                \end{array}
              \right).
$$
In what follows, we prove the condition \eqref{VP cond}. Using the
block  transformation, we can bring the left-hand side of \eqref{VP
cond} to the block form
\begin{equation*}
 \begin{array}[c]{ll}  &\abs{I_s\otimes I-h(A\otimes I .* E(hK))
 F}=
 \left(
   \begin{array}{cc}
     \Phi_1 & 0 \\
    * & \Phi_2 \\
   \end{array}
 \right),
\end{array}
\end{equation*}
where
\begin{equation*}
 \begin{array}[c]{ll}  &\Phi_1=
 \left(
 \begin{array}{ccc}
I-h\bar{a}_{11}(hK_{11}) \partial _{y_1}g_1(k_1)& \cdots & -h\bar{a}_{1s}(hK_{11})\partial _{y_1}g_1(k_s) \\
 \vdots & \ddots & \vdots  \\
-h\bar{a}_{s1}(hK_{11})\partial _{y_1}g_1(k_1) & \cdots & I-h\bar{a}_{ss}(hK_{11})\partial _{y_1}g_1(k_s) \\
\end{array} \right),\\
&\Phi_2=
 \left(
 \begin{array}{ccc}
I-h\bar{a}_{11}(hK_{22})\partial _{y_2}g_2(k_1) & \cdots & -h\bar{a}_{1s}(hK_{22})\partial _{y_2}g_2(k_s)  \\
 \vdots & \ddots & \vdots  \\
-h\bar{a}_{s1}(hK_{22}) \partial _{y_2}g_2(k_1)& \cdots & I-h\bar{a}_{ss}(hK_{22})\partial _{y_2}g_2(k_s) \\
\end{array} \right).
\end{array}
\end{equation*}
Let $F_1=\textmd{diag}(\partial _{y_1}g_1(k_1),\ldots,\partial
_{y_1}g_1(k_s))$ and $F_2=\textmd{diag}(\partial
_{y_2}g_2(k_1),\ldots,\partial _{y_2}g_2(k_s)).$ The above result
can be simplified as
\begin{equation*}
 \begin{array}[c]{ll}  &\abs{I_s\otimes I-h(A\otimes I .* E(hK))
 F}\\
 =& \abs{I_s\otimes I-h(A\otimes I .* E(hK_{11})) F_1}\abs{I_s\otimes
I-h(A\otimes I .* E(hK_{22})) F_2} .
\end{array}
\end{equation*}
 Since the SSEI method  is of volume preservation for the vector
field $u(y_1)$, the following condition is true
$$\abs{I_s\otimes I-h(A\otimes I .*
E(hK_{11})) F_1}=\abs{e^{hK_{11}}}\abs{I_s\otimes
I+h(A^{\intercal}\otimes I .* E(hK_{11})) F_1}.$$ On the other hand,
we compute
\begin{equation*}
 \begin{array}[c]{ll}  &\abs{I_s\otimes I-h(A\otimes I .* E(hK_{22})) F_2}\\
=&\abs{(I_s\otimes P)(I_s\otimes P^{-1})-h(I_s\otimes P)(A\otimes I .* E(hK_{22})) (I_s\otimes P^{-1})(I_s\otimes P)F(I_s\otimes P^{-1})}\\
=&\abs{I_s\otimes I+h (I_s\otimes P)(A\otimes I .* E(hK_{22})) (I_s\otimes P^{-1})  F_2^{\intercal}}\\
=&\abs{I_s\otimes I+h  (A\otimes I .* E(-hK_{22}^{\intercal}))
F_2^{\intercal}}\\
=&\abs{I_s\otimes I+h  F_2(A^{\intercal}\otimes I .* E(-hK_{22}^{\intercal})^{\intercal}) }\ \textmd{(transpose)}\\
=&\abs{I_s\otimes I+h  (A^{\intercal}\otimes I .*
E(-hK_{22}^{\intercal})^{\intercal}) F_2}\ \textmd{(Sylvester's
law)}\\
=&\abs{I_s\otimes I+h  (A^{\intercal}\otimes I .* E(hK_{22})) F_2}\
\textmd{(property \eqref{E-hk})}.
\end{array}
\end{equation*}
Therefore,   the VP condition \eqref{VP cond} holds and   the SSEI
method is of volume preservation for vector fields in
$\mathcal{F}^{(\infty)}$.
 \hfill
\end{proof}

\subsection{Vector
field $\mathcal{F}^{(2)}$}\label{subsec: VPP for F2} Suppose that
the function $f(y)$ of \eqref{IVP1} falls into $\mathcal{F}^{(2)}$.
Under this situation,  \eqref{KG} and \eqref{KG-2} are still true.
We obtain the following result about the VP property of SSEI
methods.

\begin{theo} \label{thm vp for F2} Consider  a one-stage or two-stage SSEI   with $c_1=c_2$ (or a composition of such method)
 that is of volume preservation for the vector field   $u(y_1):
\mathbb{R}^m\rightarrow \mathbb{R}^m$. Letting  $v(y_1,y_2):
\mathbb{R}^{m+n}\rightarrow \mathbb{R}^{m+n}$, we assume that
  there exists an invertible matrix $P$ such that for all
$y_1,y_2,$
$$P\partial_{y_2}v(y_1,y_2)P^{-1}=-\partial_{y_2}v(y_1,y_2),\ \ \ P\partial_{y_2}g_2(y_1,y_2)P^{-1}=-\partial_{y_2}g_2(y_1,y_2).$$
Then the SSEI method is of volume preservation for  the vector
fields $f(y_1,y_2)=(u(y_1),v(y_1,y_2))^{\intercal}$   in
$\mathcal{F}^{(2)}$.
\end{theo}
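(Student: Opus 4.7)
The plan is to combine the block-decomposition argument used in the proof of Theorem \ref{thm vp for FIN} with the explicit $2\times 2$ determinant expansion and $P$-conjugation sign-flip used in Theorem \ref{thm vp for S}, verifying the VP criterion \eqref{VP cond} for the full method by factorising it over the two diagonal blocks of $K$.

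First, I would exploit the block structures \eqref{KG} and \eqref{KG-2}: $K=\textmd{diag}(K_{11},K_{22})$ and $g'(y)$ is block lower triangular with diagonal blocks $\partial_{y_1}g_1(y_1)$ and $\partial_{y_2}g_2(y_1,y_2)$. Exactly as in the proof of Theorem \ref{thm vp for FIN}, this forces the matrix $I_s\otimes I - h(A\otimes I\,.*\,E(hK))F$ to be block triangular, so that both sides of \eqref{VP cond} factorise as products over the $y_1$-block (with data $E(hK_{11}),F_1$) and the $y_2$-block (with data $E(hK_{22}),F_2$), together with $\abs{e^{hK}}=\abs{e^{hK_{11}}}\cdot\abs{e^{hK_{22}}}$. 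The $y_1$-block identity is precisely the VP criterion for the SSEI applied to $u(y_1)=K_{11}y_1+g_1(y_1)$, which holds by hypothesis.

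It then remains to establish the $y_2$-block identity
\begin{equation*}
\abs{I_s\otimes I - h(A\otimes I\,.*\,E(hK_{22}))F_2}=\abs{e^{hK_{22}}}\cdot\abs{I_s\otimes I + h(A^{\intercal}\otimes I\,.*\,E(hK_{22}))F_2}.
\end{equation*}
The $\mathcal{S}$-type hypothesis on $v$ gives $PK_{22}P^{-1}=-K_{22}$, hence $Pe^{hK_{22}}P^{-1}=e^{-hK_{22}}$ and $\abs{e^{hK_{22}}}=1$. For a one-stage SSEI the identity collapses to $\abs{I-ha_{11}\partial_{y_2}g_2(k_1)}=\abs{I+ha_{11}\partial_{y_2}g_2(k_1)}$, which follows by $P$-conjugation together with $P\partial_{y_2}g_2P^{-1}=-\partial_{y_2}g_2$, exactly as in the one-stage part of Theorem \ref{thm vp for S}. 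For a two-stage SSEI with $c_1=c_2$, I would expand both determinants in the $y_2$-block as in \eqref{resul 2}, with $g'(k_i)$ replaced by $\partial_{y_2}g_2(k_i)$ and $K$ by $K_{22}$; conjugation by $I_2\otimes P$ then flips the sign of every $\partial_{y_2}g_2$-factor, so the $h$-linear terms change sign (one flip each), the $h^2a_{11}a_{22}$-term is preserved (two flips), and the cross term $h^2a_{12}a_{21}\,e^{(c_1-c_2)hK_{22}}\partial_{y_2}g_2(k_2)\,e^{(c_2-c_1)hK_{22}}\partial_{y_2}g_2(k_1)$ undergoes two sign flips in the $\partial_{y_2}g_2$-factors and simultaneous inversion of the exponentials via $PK_{22}P^{-1}=-K_{22}$.

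The main obstacle, and the reason the hypothesis $c_1=c_2$ is essential, is precisely this cross term: after $P$-conjugation the exponentials become $e^{(c_2-c_1)hK_{22}}$ and $e^{(c_1-c_2)hK_{22}}$, in the swapped order, which matches the original only when all exponentials collapse to $I$, i.e.\ when $c_1=c_2$ (in full analogy with the two-stage argument in Theorem \ref{thm vp for S}). Once this is established, both the $y_1$- and $y_2$-block factors of \eqref{VP cond} hold, and the composition statement follows from the trivial fact that a composition of volume-preserving maps is volume-preserving.
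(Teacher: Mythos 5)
Your proposal is correct and follows essentially the same route as the paper: the block lower-triangular structure of $g'$ factorises the VP criterion into a $y_1$-block identity (which is the hypothesis that the method preserves volume for $u$) and a $y_2$-block identity established by $P$-conjugation with $PK_{22}P^{-1}=-K_{22}$ and $P\partial_{y_2}g_2P^{-1}=-\partial_{y_2}g_2$, with $c_1=c_2$ needed exactly to reconcile the swapped exponentials in the cross term. Your identification of the role of $c_1=c_2$ matches the paper's remark following Theorem \ref{thm vp for S}, and your handling of the $\abs{e^{hK_{11}}}$ factor inside the $y_1$-block identity is, if anything, slightly more explicit than the paper's.
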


\begin{proof}
 From the conditions of this theorem, it follows that
$PK_{22}P^{-1}=-K_{22}$ and $\abs{e^{hK_{22}}}=1$.

For the one-stage SSEI, the condition for volume preservation is
\begin{equation*}
\abs{ I-ha_{11} g'(k_{1}) }= \abs{
        I+ha_{11} g'(k_{1})},
\end{equation*}
which  can be rewritten as
\begin{equation}\label{ff2}
  \abs{ I-ha_{11} \partial_{y_1}g_1 }   \abs{ I-ha_{11} \partial_{y_2}g_2 }
 =  \abs{ I+ha_{11} \partial_{y_1}g_1 }   \abs{ I+ha_{11} \partial_{y_2}g_2}.
\end{equation}
 Since the   method  is of volume preservation for the vector
field $u(y_1)$, we have $$\abs{ I-ha_{11} \partial_{y_1}g_1 } =\abs{
I+ha_{11} \partial_{y_1}g_1 }.$$ On the other hand,
$$ \abs{ I-ha_{11} \partial _{y_2}g_2 }= \abs{ PP^{-1}-ha_{11}P \partial _{y_2}g_2 P^{-1}}
= \abs{ I+ha_{11}  \partial _{y_2}g_2  }.$$ Thus \eqref{ff2} is
proved.

For the two-stage SSEI,  it is of volume preservation  if and only
if \eqref{resul 2} is true. According to the special result of $g'$,
one has
\begin{equation*}
 \begin{array}[c]{ll}  & \textmd{the left hand side of}\ \eqref{resul 2}\\
 =&|I-ha_{11}  \partial_{y_1}g_1(k_{1})-ha_{22}  \partial_{y_1}g_1(k_{2})+h^2a_{11}a_{22} \partial_{y_1}g_1(k_{1}) \partial_{y_1}g_1(k_{2})\\
 &\ \ -h^2a_{12}a_{21}
 \partial_{y_1}g_1(k_{2})
\partial_{y_1}g_1(k_{1})|\\
&|I-ha_{11}  \partial_{y_2}g_2(k_{1})-ha_{22}
 \partial_{y_2}g_2(k_{2})+h^2a_{11}a_{22}  \partial_{y_2}g_2(k_{1})  \partial_{y_2}g_2(k_{2})\\
 &\ \ -h^2a_{12}a_{21}
  \partial_{y_2}g_2(k_{2})
 \partial_{y_2}g_2(k_{1})|\\
  =&|I+ha_{11}  \partial_{y_1}g_1(k_{1})+ha_{22}  \partial_{y_1}g_1(k_{2})+h^2a_{11}a_{22} \partial_{y_1}g_1(k_{1}) \partial_{y_1}g_1(k_{2})\\
  &\ \ -h^2a_{12}a_{21}
 \partial_{y_1}g_1(k_{2})
\partial_{y_1}g_1(k_{1})|\\
&|I-ha_{11}  \partial_{y_2}g_2(k_{1})-ha_{22}
 \partial_{y_2}g_2(k_{2})+h^2a_{11}a_{22}  \partial_{y_2}g_2(k_{1})  \partial_{y_2}g_2(k_{2})\\
 &\ \ -h^2a_{12}a_{21}
  \partial_{y_2}g_2(k_{2})
 \partial_{y_2}g_2(k_{1})|.
\end{array}
\end{equation*}
It then can be verified that
\begin{equation*}
 \begin{array}[c]{ll}  & |I-ha_{11}  \partial_{y_2}g_2(k_{1})-ha_{22}
 \partial_{y_2}g_2(k_{2})+h^2a_{11}a_{22}  \partial_{y_2}g_2(k_{1})  \partial_{y_2}g_2(k_{2})\\
 &\ \ -h^2a_{12}a_{21}
  \partial_{y_2}g_2(k_{2})
 \partial_{y_2}g_2(k_{1})|\\
 =& | PP^{-1}-ha_{11} P \partial_{y_2}g_2(k_{1})P^{-1}-ha_{22}
 P\partial_{y_2}g_2(k_{2})P^{-1}+h^2a_{11}a_{22}  P\partial_{y_2}g_2(k_{1})P^{-1}\\
 &\ \ P  \partial_{y_2}g_2(k_{2})P^{-1} -h^2a_{12}a_{21}
  P\partial_{y_2}g_2(k_{2})P^{-1}P
 \partial_{y_2}g_2(k_{1})P^{-1}|\\
  =& \mid I+ha_{11}  \partial_{y_2}g_2(k_{1})+ha_{22}
  \partial_{y_2}g_2(k_{2}) +h^2a_{11}a_{22}   \partial_{y_2}g_2(k_{1})   \partial_{y_2}g_2(k_{2})\\
  &\ \  -h^2a_{12}a_{21}
  \partial_{y_2}g_2(k_{2})
 \partial_{y_2}g_2(k_{1}) \mid.
\end{array}
\end{equation*}
Consequently, \begin{equation*}
 \begin{array}[c]{ll}  & \textmd{the left hand side of}\ \eqref{resul 2}\\
  =&|I+ha_{11}  \partial_{y_1}g_1(k_{1})+ha_{22}  \partial_{y_1}g_1(k_{2})+h^2a_{11}a_{22} \partial_{y_1}g_1(k_{1}) \partial_{y_1}g_1(k_{2})\\
  &\ \ -h^2a_{12}a_{21}
 \partial_{y_1}g_1(k_{2})
\partial_{y_1}g_1(k_{1})|\\
&| I+ha_{11}  \partial_{y_2}g_2(k_{1})+ha_{22}
  \partial_{y_2}g_2(k_{2}) +h^2a_{11}a_{22}   \partial_{y_2}g_2(k_{1})   \partial_{y_2}g_2(k_{2}) \\
  &\ \ -h^2a_{12}a_{21}
  \partial_{y_2}g_2(k_{2})
 \partial_{y_2}g_2(k_{1})|\\
 =&\textmd{the right hand side of}\ \eqref{resul 2}.
\end{array}
\end{equation*}
 Therefore, all two-stage SSEI methods with $c_1=c_2$ are
 of volume preservation.
 \hfill
\end{proof}

\begin{rem}
We  note  that when $K=\mathbf{0}$, all the results given in this
section reduce  to those proposed in  \cite{Webb2016}, which
demonstrate the wilder  applications of the analysis.
\end{rem}

\section{Applications to various problems}
\label{sec:applications} In this section, we pay attention to the
application of the SSEI methods to various problems and show the
volume preservation of different exponential integrators and
ERKN/RKN methods by using the analysis given in Section \ref{sec:
VPP for DVF}.
\subsection{Highly oscillatory second-order systems}\label{subsec-highly 2}
  Consider the following first-order
 systems
\begin{equation}\label{IVPPP}
y^{\prime}(t)=J^{-1}My(t)+J^{-1}\nabla V(y(t)),
\end{equation}
where the matrix $J$ is constant and invertible, $M$ is a symmetric
matrix and $V$ is a differentiable function. This system is the
exact  pattern  \eqref{IVP1} with
\begin{equation}\label{AG}
K=J^{-1}M,\ \ g(y(t))=J^{-1}\nabla V(y(t)).
\end{equation}
It can be verified that
$$Jg'(y)J^{-1}=JJ^{-1}\nabla^2 V(y)J^{-1}=\nabla^2 V(y)J^{-1}=-g'(y)^{\intercal}$$
and
$$J(K+g'(y))J^{-1}=-(K+g'(y))^{\intercal}.$$
This shows that the set  $\mathcal{H}$ contains all vector fields of
\eqref{IVPPP} with the same $P=J$. Thus in the light of Theorem
\ref{thm vp for H}, all SSEI methods  are of volume preservation for
solving the
 system  \eqref{IVPPP}.

 When  $J=\left(
                                                                \begin{array}{cc}
                                                                  0 & I \\
                                                                  -I& 0 \\
                                                                \end{array}
                                                              \right)
,$ the system     \eqref{IVPPP} is a Hamiltonian system $
y^{\prime}(t)=J^{-1} \nabla H(y(t)) $ with  the Hamiltonian
$
 H(y)=  \dfrac{1}{2}y^{\intercal}My+V(y).
$ All SSEI methods  are of volume preservation for this Hamiltonian
system.
 This is another explanation of the fact that
 symplectic exponential integrators are  of volume preservation for
Hamiltonian systems.

Consider another special and important case of \eqref{IVPPP} by
choosing
$$y=\left(
      \begin{array}{c}
        q \\
        p \\
      \end{array}
    \right),\ \
J^{-1}=\left(
                                                                \begin{array}{cc}
                                                                  0 & I \\
                                                                  -I& N \\
                                                                \end{array}
                                                              \right),\
                                                              \
M=\left(
    \begin{array}{cc}
      \Omega & 0 \\
      0 & I \\
    \end{array}
  \right),\ \ V(y)=
                       V_1(q),
 $$
which gives the following second-order ODE
\begin{equation}q''-Nq'+\Omega q=-\nabla V_1(q).\label{HOS-2}\end{equation}
This system   stands for highly oscillatory problems and many
problems fall into this kind of equation such as the dissipative
molecular dynamics, the (damped) Duffing, charged-particle dynamics
 in a   constant magnetic field and semidiscrete nonlinear
wave equations. Applying the   SSEI methods to \eqref{HOS-2} and
considering Theorem \ref{thm vp for H}, we obtain the following
corollary.

\begin{cor}
The following $s$-stage adapted exponential integrator
\begin{equation}\label{EI for second}
\left\{\begin{array}[c]{ll} &k_i=\exp^{11}(c_i hK)q_n+\exp^{12}(c_i
hK)q'_n-h\sum\limits_{j=1}^{s}a_{ij}\exp^{12}((c_i-c_j) h K)\nabla
V_1(k_{j}),\\
&\qquad \qquad \qquad \qquad \qquad \qquad \qquad \qquad \qquad
\qquad \qquad \qquad  \ i=1,2,\ldots,s,
\\
&q_{n+1}=\exp^{11}(hK)q_n+\exp^{12}(hK)q'_n-h\sum\limits_{i=1}^{s}b_{i}\exp^{12}((1-c_i)hK)\nabla V_1(k_{i}),\\
&q'_{n+1}=\exp^{21}(hK)q_n+\exp^{22}(hK)q'_n-h\sum\limits_{i=1}^{s}b_{i}\exp^{22}((1-c_i)hK)\nabla
V_1(k_{i})
\end{array}\right.
\end{equation}
are  of volume preservation for the  second-order highly oscillatory
equation \eqref{HOS-2},  where  $\exp (hK)$ is partitioned into
$\left(
                                           \begin{array}{cc}
                                             \exp^{11}(hK) & \exp^{12}(hK) \\
                                             \exp^{21}(hK) & \exp^{22}(hK) \\
                                           \end{array}
                                         \right)
$ and the same denotations are used for other matrix-valued
functions. Here $c=(c_1,\ldots,c_s)^{\intercal}, \
b=(b_1,\ldots,b_s)^{\intercal}$ and $A=(a_{ij})_{s\times s}$ are
given in Definition \ref{scheme EI spe}. If $N$ commutes with
$\Omega$, the results of $\exp^{ij}$ for $i,j=1,2$ appearing in
\eqref{EI for second} can be expressed explicitly. After some
calculations, it is obtained that
\begin{equation}\label{EE1234}
 \begin{array}[c]{ll} & \exp^{11}(hK)=e^{\frac{h}{2}N}\Big(\cosh\big(\frac{h}{2}\sqrt{N^2-4\Omega}\big)-
 N\sinh\big(\frac{h}{2}\sqrt{N^2-4\Omega}\big) (\sqrt{N^2-4\Omega})^{-1}\Big),\\
 & \exp^{12}(hK)=2e^{\frac{h}{2}N} \sinh\big(\frac{h}{2}\sqrt{N^2-4\Omega}\big) (\sqrt{N^2-4\Omega})^{-1},\\
 &\exp^{21}(hK)=-\Omega\exp^{12}(hK),\\
 &
 \exp^{22}(hK)=e^{\frac{h}{2}N}\Big(\cosh\big(\frac{h}{2}\sqrt{N^2-4\Omega}\big)+
 N\sinh\big(\frac{h}{2}\sqrt{N^2-4\Omega}\big)
 (\sqrt{N^2-4\Omega})^{-1}\Big).
\end{array}
\end{equation}
The  results are still true if we replace $h$ by $kh$ with any $k\in
\mathbb{R}$.
\end{cor}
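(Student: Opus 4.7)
The plan is to recognise the second-order system \eqref{HOS-2} as a concrete instance of the general framework \eqref{IVPPP}, reducing the volume-preservation claim to something already in hand, and then to compute the relevant matrix exponential under the commuting assumption. More precisely, with $y = (q^{\intercal}, (q')^{\intercal})^{\intercal}$ the specified choices
\[
J^{-1} = \begin{pmatrix} 0 & I \\ -I & N \end{pmatrix}, \quad
M = \begin{pmatrix} \Omega & 0 \\ 0 & I \end{pmatrix}, \quad
V(y) = V_1(q)
\]
give $K = J^{-1}M = \bigl(\begin{smallmatrix} 0 & I \\ -\Omega & N \end{smallmatrix}\bigr)$ and $g(y) = J^{-1}\nabla V(y) = (0, -\nabla V_1(q))^{\intercal}$. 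The paragraph preceding the corollary has already verified that every vector field of the form \eqref{IVPPP} lies in $\mathcal{H}$ with $P = J$, so Theorem \ref{thm vp for H} immediately gives volume preservation of every SSEI method applied to \eqref{HOS-2}. The first step is therefore only a bookkeeping check that the hypotheses of Theorem \ref{thm vp for H} apply here.

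The next step is to specialise the abstract SSEI scheme \eqref{EI} with coefficients \eqref{rev co} to the present block structure, in order to obtain the explicit position–velocity recursion \eqref{EI for second}. I would partition $e^{c_i hK}$, $e^{(c_i - c_j)hK}$ and $e^{(1-c_i)hK}$ into $2\times 2$ operator blocks with entries $\exp^{lm}(\cdot)$, write $k_i = ((k_i^q)^{\intercal}, (k_i^p)^{\intercal})^{\intercal}$, and then carry out the matrix multiplications row by row. Because $g$ has a zero first component and $-\nabla V_1(k_j^q)$ in the second, each sum $\bar a_{ij}(hK)g(k_j)$ contributes only through the second column of the exponential blocks; reading off the first row gives the $k_i = k_i^q$-equation of \eqref{EI for second}, and applying the same reduction to the update step gives the equations for $q_{n+1}$ and $q'_{n+1}$. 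This part is purely a rewriting; the observation to keep in mind is that $k_j^q$ alone enters $\nabla V_1$, so the nonlinear system decouples to a fixed-point problem in the $k_i^q$ only.

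For the explicit formulas \eqref{EE1234} under the assumption $N\Omega = \Omega N$, the plan is to compute $e^{hK}$ by solving the linear ODE $q'' - Nq' + \Omega q = 0$ in closed form. I would use the gauge substitution $q(t) = e^{tN/2} r(t)$, which turns the equation (using the commutativity of $N$ and $\Omega$, hence of $N$ with $e^{tN/2}$) into $r''(t) = \tfrac{1}{4}(N^2 - 4\Omega)\, r(t)$. Setting $W = \tfrac{1}{2}\sqrt{N^2 - 4\Omega}$ (defined through the matrix functional calculus applied to the commuting pair $N,\Omega$), the general solution is $r(t) = \cosh(tW) r_0 + \sinh(tW) W^{-1} r_0'$. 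Inverting the gauge change via $r_0 = q_0$ and $r_0' = q_0' - \tfrac{N}{2}q_0$, and then differentiating to get $q'(t)$, expresses the flow map $(q_0, q_0') \mapsto (q(t), q'(t))$ — which is $e^{tK}$ — as a $2\times 2$ block matrix whose entries are exactly the four formulas stated in \eqref{EE1234}. Replacing $h$ by $kh$ is literally the observation that $e^{khK}$ has the same form with $h$ replaced by $kh$, since all the arguments above are homogeneous in $t$.

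The only nontrivial technical point, and thus the most likely obstacle, is that the matrix $\sqrt{N^2 - 4\Omega}$ need not be real (for instance when $4\Omega - N^2$ is positive definite, as in stiff oscillatory regimes). To handle this uniformly I would invoke the analytic functional calculus for commuting matrices and treat $\cosh$, $\sinh$ and $W^{-1}$ as entire/analytic functions of $N^2 - 4\Omega$, so that the final expressions in \eqref{EE1234} are independent of the branch of the square root and reduce to the usual trigonometric or hyperbolic sine–cosine identities in either regime. Once this is set up, all computations are routine matrix manipulations and the corollary follows.
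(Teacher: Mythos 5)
Your proposal is correct and follows essentially the same route as the paper: it identifies \eqref{HOS-2} as the special case of \eqref{IVPPP} already shown to lie in $\mathcal{H}$ with $P=J$, invokes Theorem \ref{thm vp for H} for volume preservation, and reads off the scheme \eqref{EI for second} from the block structure of $e^{hK}$ and $g$. The gauge substitution $q=e^{tN/2}r$ is a clean way to carry out the ``some calculations'' that the paper omits, and it does reproduce \eqref{EE1234} exactly (including the well-definedness of the even functions of $\sqrt{N^2-4\Omega}$).
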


If we further assume that $\Omega=0$, the equation \eqref{HOS-2}
becomes
 \begin{equation}q''=Nq'-\nabla V_1(q).\label{HOS-2spe}\end{equation}
One typical example of this system is charged-particle dynamics  in
a   constant magnetic field, which can be expressed by (see
\cite{Hairer2017-2})
 \begin{equation}\label{CPD}
x''= x' \times  B +F(x). \end{equation}
 Here $x(t)\in \RR^3$
describes the position of a particle moving in an electro-magnetic
field,   $F(x) = -\nabla_x U(x)$ is an electric field with the
scalar potential $U(x)$, and $B= \nabla_x \times A(x)$ is a constant
magnetic field with the vector potential  $A(x) = -\frac{1}{2}x
\times B$.

 Under the condition that $\Omega=0$, formulae
\eqref{EE1234} can be written more succinctly as:
\begin{equation*}
  \exp^{11}(hK)=I,\ \ \exp^{12}(hK)= h\varphi_1(hN),\ \
  \exp^{21}(hK)=0,\ \
 \exp^{22}(hK)=\varphi_0(hN),
\end{equation*}
where the $\varphi$-functions are defined by (see
\cite{Hochbruck2010,Hochbruck2009})
\begin{equation}
 \varphi_0(z)=e^{z},\ \ \varphi_k(z)=\int_{0}^1
e^{(1-\sigma)z}\frac{\sigma^{k-1}}{(k-1)!}d\sigma, \ \ k=1,2,\ldots.
\label{pphi}%
\end{equation}
We then get the following volume preserving  methods for the special
and important second-order system \eqref{HOS-2spe}.
\begin{cor}
The following $s$-stage   integrator
\begin{equation}\label{spe EI for second}
\left\{\begin{array}[c]{ll} &k_i=q_n+c_ih\varphi_1(c_i
hN)q'_n-h^2\sum\limits_{j=1}^{s}a_{ij}(c_i-c_j) \varphi_1((c_i-c_j)
h N)\nabla
V_1(k_{j}),\\
&\qquad \qquad \qquad \qquad \qquad \qquad \qquad \qquad \qquad
\qquad \qquad \qquad  \ i=1,2,\ldots,s,
\\
&q_{n+1}= q_n+h\varphi_1(hN)q'_n-h^2\sum\limits_{i=1}^{s}b_{i}(1-c_i)  \varphi_1((1-c_i)hN)\nabla V_1(k_{i}),\\
&q'_{n+1}=\varphi_0(hN)q'_n-h\sum\limits_{i=1}^{s}b_{i}\varphi_0((1-c_i)hN)\nabla
V_1(k_{i})
\end{array}\right.
\end{equation}
are  of volume preservation for the  highly oscillatory second-order
system \eqref{HOS-2spe}, where $c=(c_1,\ldots,c_s)^{\intercal}, \
b=(b_1,\ldots,b_s)^{\intercal}$ and $A=(a_{ij})_{s\times s}$ are
given in Definition \ref{scheme EI spe}.
\end{cor}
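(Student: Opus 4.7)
The plan is to derive this corollary as a direct specialization of the preceding corollary for \eqref{HOS-2}, by taking $\Omega=\mathbf{0}$. With $\Omega=\mathbf{0}$, the equation \eqref{HOS-2} collapses to $q''=Nq'-\nabla V_1(q)$, which is precisely \eqref{HOS-2spe}. Since the preceding corollary already establishes that the integrator \eqref{EI for second} is volume preserving for \eqref{HOS-2}, volume preservation for \eqref{HOS-2spe} is automatic as soon as one verifies two things: the claimed identities for the blocks $\exp^{ij}(hK)$ when $\Omega=\mathbf{0}$, and that substituting these identities into \eqref{EI for second} produces exactly \eqref{spe EI for second}.

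To identify the four blocks of $e^{hK}$ when $\Omega=\mathbf{0}$, I would bypass the formulas \eqref{EE1234} (whose expressions contain $(\sqrt{N^2-4\Omega})^{-1}$ and are most cleanly evaluated as limits) and instead compute $e^{hK}$ directly by power series. With
\[
K=\begin{pmatrix} 0 & I \\ 0 & N \end{pmatrix},
\]
an easy induction gives $(hK)^{k}=\begin{pmatrix} 0 & h^{k}N^{k-1} \\ 0 & h^{k}N^{k} \end{pmatrix}$ for $k\ge 1$, and termwise summation together with the definitions in \eqref{pphi} yields
\[
e^{hK}=\begin{pmatrix} I & h\varphi_1(hN) \\ 0 & \varphi_0(hN) \end{pmatrix}.
\]
This identifies $\exp^{11}(hK)=I$, $\exp^{12}(hK)=h\varphi_1(hN)$, $\exp^{21}(hK)=0$ and $\exp^{22}(hK)=\varphi_0(hN)$, as claimed. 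Because the derivation nowhere uses any specific sign or magnitude of $h$, the same formulas hold with $h$ replaced by any scaled argument $\kappa h$; in particular $\exp^{12}(\kappa hK)=\kappa h\,\varphi_1(\kappa hN)$.

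Inserting these blocks into the three equations of \eqref{EI for second} then reduces them, line by line, to \eqref{spe EI for second}: the $\exp^{11}$ factor in front of $q_n$ becomes the identity in the first two lines, the $\exp^{21}$ term drops from the $q'_{n+1}$ equation because it vanishes, and each $\exp^{12}(\kappa hK)$ term produces the factor $\kappa h\varphi_1(\kappa hN)$ that, combined with the outer $h$, generates the $h^2$ prefactor and the scalar $(c_i-c_j)$ or $(1-c_i)$ visible in \eqref{spe EI for second}. Since the two schemes define the same numerical map $\varphi_h$, volume preservation transfers immediately from the former to the latter. The only genuinely subtle point in the argument is the evaluation of $\exp^{12}$ when $\Omega=\mathbf{0}$, since a naive substitution into \eqref{EE1234} gives a $0/0$-type expression through $(\sqrt{N^2})^{-1}\sinh(hN/2)$; the cleanest remedy, as outlined, is to redo the exponential via the power series of $e^{hK}$ and read off the $\varphi$-functions directly.
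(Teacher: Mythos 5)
Your proposal is correct and follows essentially the same route as the paper: the corollary is obtained by specializing the preceding corollary for \eqref{HOS-2} to $\Omega=\mathbf{0}$, identifying the blocks $\exp^{11}(hK)=I$, $\exp^{12}(hK)=h\varphi_1(hN)$, $\exp^{21}(hK)=0$, $\exp^{22}(hK)=\varphi_0(hN)$, and substituting into \eqref{EI for second}. Your only deviation is evaluating $e^{hK}$ by the power series of the block-triangular $K$ rather than by simplifying \eqref{EE1234}, which is a cleaner way to sidestep the degenerate $(\sqrt{N^2-4\Omega})^{-1}$ factor but does not change the argument.
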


\begin{rem}
 It is noted that the
above two corollaries are new
 discoveries which are of great importance to Geometric Integration for
  second-order highly oscillatory problems.
\end{rem}

\subsection{Separable partitioned  systems}
 It has been proved in \cite{Webb2016} that the set $\mathcal{S}$
contains all separable partitioned  systems.  As an example, we
consider
\begin{equation}\begin{aligned}& \left(
                   \begin{array}{c}
                     q \\
                      p \\
                   \end{array}
                 \right)
'=\left(
    \begin{array}{c}
      p \\
      -\Omega q+\tilde{g}(q) \\
    \end{array}
  \right)
 =\left(
    \begin{array}{cc}
      0 & I \\
      -\Omega  & 0 \\
    \end{array}
  \right) \left(
                   \begin{array}{c}
                     q \\
                      p \\
                   \end{array}
                 \right)+\left(
                           \begin{array}{c}
                             0 \\
                            \tilde{g}(q) \\
                           \end{array}
                         \right),
\label{2prob}%
\end{aligned}\end{equation}
which is exactly the system \eqref{IVP1} with
\begin{equation*}
 K=\left(
    \begin{array}{cc}
      0 & I \\
      -\Omega  & 0 \\
    \end{array}
  \right),\ g=\left(
                           \begin{array}{c}
                             0 \\
                             \tilde{g}(q) \\
                           \end{array}
                         \right), \ f=\left(
    \begin{array}{c}
      p \\
      -\Omega q+\tilde{g}(q) \\
    \end{array}
  \right).
\end{equation*}

It can be checked that  $f$ and $g$ both fall into $\mathcal{S}$
with the same $P=\textmd{diag}(I,-I)$. For this special matrix $K$,
it is clear that
\begin{equation}\label{exp spe}
e^{xK} = \left(
\begin{array}
[c]{cc}%
\phi_{0} (x^{2}\Omega ) & x\phi_{1} (x^{2}\Omega )\\
-x\Omega\phi_{1}  (x^{2}\Omega ) & \phi_{0} (x^{2}\Omega )
\end{array}
\right),\ \ \   \textmd{for}\ \ \   x\in \mathbb{R}\end{equation}
with $
\phi_{i}(\Omega):=\sum\limits_{l=0}^{\infty}\dfrac{(-1)^{l}\Omega^{l}}{(2l+i)!}
$ for  $i=0,1.$
 Thus the exponential
integrator \eqref{EI} has a special form, which is called as
extended RKN (ERKN) integrators  and is represented below.
\begin{defi}
\label{scheme Erkn} (See \cite{wu2010-1}) An $s$-stage
   ERKN
integrator for solving (\ref{2prob}) is defined by%
 \begin{equation*}
\left\{\begin{array}
[c]{ll}%
Q_{i} &
=\phi_{0}(c_{i}^{2}V)q_{n}+hc_{i}\phi_{1}(c_{i}^{2}V)p_{n}+h^{2}\textstyle\sum\limits_{j=1}^{s}\bar{a}_{ij}(V)\tilde{g}(Q_{j}),\ \ \ i=1,\ldots,s,\\
q_{n+1} & =\phi_{0}(V)q_{n}+h\phi_{1}(V)p_{n}+h^{2}\textstyle\sum
\limits_{i=1}^{s}\bar{b}_{i}(V) \tilde{g}(Q_{i}),\\
p_{n+1} &
=-h\Omega\phi_{1}(V)q_{n}+\phi_{0}(V)p_{n}+h\textstyle\sum\limits_{i=1}^{s}b_{i}(V)
\tilde{g}(Q_{i}),
\end{array}\right.
\end{equation*}
 where $c_i$ for $i=1,\ldots,s$ are real constants, $b_{i}(V)$,
$\bar{b}_{i}(V)$ for $i=1,\ldots,s$, and $\bar{a}_{ij}(V)$ for
$i,j=1,\ldots,s$ are matrix-valued functions of $V\equiv
h^{2}\Omega.$
\end{defi}

ERKN integrators were firstly proposed in \cite{wu2010-1}. Further
discussions about  ERKN integrators have been given recently,
including symmetric integrators (see \cite{wang2017-Cal}),
symplectic integrators (see \cite{wu2013-book}), energy-preserving
integrators  (see \cite{wu2013-JCP}) and other kinds integrators
(see \cite{wang-2016,wu2017-JCAM}). However, to our knowledge,  the
volume-preserving property of ERKN integrators has not been
researched yet in the literature. With the analysis given in this
paper, we get the following VP result of ERKN integrators.
\begin{cor}
Consider a  kind of $s$-stage ERKN integrators
\begin{equation}
\left\{\begin{array}
[c]{ll}%
Q_{i} &
=\phi_{0}(c_{i}^{2}V)q_{n}+hc_{i}\phi_{1}(c_{i}^{2}V)p_{n}+h^{2}\textstyle\sum\limits_{j=1}^{s}a_{ij}(c_i-c_j)\phi_{1}((c_i-c_j)^2V)\tilde{g}(Q_{j}),\\
& \qquad \qquad \qquad \qquad \qquad \qquad \qquad \qquad \qquad \qquad \qquad \qquad  i=1,\ldots,s,\\
q_{n+1} & =\phi_{0}(V)q_{n}+h\phi_{1}(V)p_{n}+h^{2}\textstyle\sum
\limits_{i=1}^{s}b_{i}(1-c_i)\phi_{1}((1-c_i)^2V) \tilde{g}(Q_{i}),\\
p_{n+1} &
=-h\Omega\phi_{1}(V)q_{n}+\phi_{0}(V)p_{n}+h\textstyle\sum\limits_{i=1}^{s}b_{i}
\phi_{0}((1-c_i)^2V) \tilde{g}(Q_{i}),
\end{array}\right.
\label{ERKN rev co}%
\end{equation}
where  $c=(c_1,\ldots,c_s)^{\intercal}, \
b=(b_1,\ldots,b_s)^{\intercal}$ and $A=(a_{ij})_{s\times s}$ are
given in Definition \ref{scheme EI spe}. Under the condition that
$b_j\neq0$ for all $j=1,\ldots,s,$ all one-stage and two-stage (with
$c_1=c_2$)   ERKN integrators \eqref{ERKN rev co}, and compositions
thereof, are of volume preservation for solving the separable
partitioned  system \eqref{2prob}.
\end{cor}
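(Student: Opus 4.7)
The plan is to reduce the corollary to Theorem \ref{thm vp for S} by recognising the ERKN scheme \eqref{ERKN rev co} as an instance of the SSEI framework applied to the structured system \eqref{2prob}. First I would point out that \eqref{2prob} is precisely \eqref{IVP1} with the block matrix $K=\bigl(\begin{smallmatrix} 0 & I \\ -\Omega & 0\end{smallmatrix}\bigr)$ and $g(q,p)=(0,\tilde g(q))^{\intercal}$, and recall that the paper has already established $f,g\in\mathcal{S}$ with common conjugating matrix $P=\textmd{diag}(I,-I)$. Together with the hypothesis $b_j\neq 0$, this means the abstract hypothesis for applying Theorem \ref{thm vp for S} to this problem is already in place; all that remains is to identify \eqref{ERKN rev co} as an SSEI method in the sense of Definition \ref{scheme EI sspe}.

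Next I would perform the identification step. Using the explicit form \eqref{exp spe} of $e^{xK}$, one has, for any vector of the form $(0,w)^{\intercal}$ with $w\in\mathbb{R}^d$,
\begin{equation*}
e^{xK}\begin{pmatrix} 0 \\ w\end{pmatrix}
= \begin{pmatrix} x\phi_1(x^2\Omega)\,w \\ \phi_0(x^2\Omega)\,w\end{pmatrix}.
\end{equation*}
Plugging the SSEI coefficients $\bar a_{ij}(hK)=a_{ij}e^{(c_i-c_j)hK}$ and $\bar b_i(hK)=b_i e^{(1-c_i)hK}$ into \eqref{EI} and writing $k_i=(Q_i,P_i)^{\intercal}$, the contributions from $g(k_j)=(0,\tilde g(Q_j))^{\intercal}$ produce exactly the $\phi_1$-weighted terms in the $Q_i$- and $q_{n+1}$-equations of \eqref{ERKN rev co}, and the $\phi_0$-weighted terms in the $p_{n+1}$-equation; the homogeneous pieces $e^{c_i hK}y_n$ and $e^{hK}y_n$ give the leading $\phi_0(c_i^2V)q_n+c_ih\phi_1(c_i^2V)p_n$ and its analogues with $V=h^2\Omega$. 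This is a block-by-block calculation, with the only subtlety being to keep track of the factors $(c_i-c_j)h$ and $(1-c_i)h$ that come out of $e^{xK}(0,w)^{\intercal}$; no nontrivial computation beyond \eqref{exp spe} is required.

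Having matched \eqref{ERKN rev co} with an SSEI method whose underlying RK tableau $(A,b,c)$ is the prescribed symplectic one with nonzero $b_j$, the conclusion follows directly: by Theorem \ref{thm vp for S} every one-stage SSEI and every two-stage SSEI with $c_1=c_2$ is volume-preserving for vector fields $f,g\in\mathcal{S}$ sharing a common $P$, and a composition of volume-preserving maps is volume-preserving. Applied to \eqref{2prob}, this yields the claimed statement.

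The main obstacle is really only bookkeeping: one must check carefully that the mixed factors $(c_i-c_j)h$ and $(1-c_i)h$ land in the correct slot and that $e^{(c_i-c_j)hK}$ acting on $(0,\tilde g(Q_j))^{\intercal}$ reproduces exactly the ERKN ansatz rather than a scaled variant. Everything else, including the volume-preservation argument itself, is supplied verbatim by Theorem \ref{thm vp for S} since the vector-field hypothesis $f,g\in\mathcal{S}$ with $P=\textmd{diag}(I,-I)$ has already been verified in the discussion preceding Definition \ref{scheme Erkn}.
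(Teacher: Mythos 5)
Your proposal is correct and follows essentially the same route as the paper: the paper's own proof simply adapts the SSEI scheme of Definition \ref{scheme EI spe} to the system \eqref{2prob} using the explicit exponential \eqref{exp spe} to recover \eqref{ERKN rev co}, and then invokes Theorem \ref{thm vp for S}. Your block-by-block verification of the identification (including the factors $(c_i-c_j)h$ and $(1-c_i)h$ emerging from $e^{xK}(0,w)^{\intercal}$) is just a more explicit rendering of the step the paper states in one line.
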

\begin{proof}
In the light of Definition \ref{scheme EI spe} and the result
\eqref{exp spe}, we adapt the SSEI methods to the system
(\ref{2prob}) and then get the scheme \eqref{ERKN rev co}. Based on
Theorem \ref{thm vp for S}, the  volume preserving  result of
\eqref{ERKN rev co} is immediately obtained.\hfill
\end{proof}

\begin{rem}
We note that this is a novel result which  studies the
volume-preserving ERKN integrators for \eqref{2prob}.  Moreover,
from the scheme \eqref{ERKN rev co}, it can be observed that all
one-stage and two-stage  with $c_1=c_2$  ERKN integrators  are
explicit, which means that we obtain explicit
 volume preserving   ERKN integrators for the separable partitioned system
\eqref{2prob}.
\end{rem}

  If $\Omega$ is a
 symmetric and positive  semi-definite matrix,  and
$\tilde{g}(q)=-\nabla U(q)$, the system \eqref{2prob} is  an
oscillatory  Hamiltonian system $\left(
                   \begin{array}{c}
                     q \\
                      p \\
                   \end{array}
                 \right)'= \left(
                             \begin{array}{cc}
                               0 & I \\
                               -I & 0 \\
                             \end{array}
                           \right)
                  \nabla H(q,p)$  with  the Hamiltonian
 \begin{equation} \label{HH2} H(q,p) = \frac{1}{2}p^{\intercal}p+ \frac{1}{2}
q^{\intercal}\Omega q+U (q).\end{equation} It has been noted in
Subsection \ref{subsec-highly 2} that this vector field falls into
the set $\mathcal{H}$. Thus Theorem \ref{thm vp for H} provides
another way to prove  the well-known fact that all symplectic ERKN
integrators \eqref{ERKN rev co} are of volume preservation for the
oscillatory Hamiltonian system \eqref{HH2}.

In what follows, we study the volume-preserving property of RKN
methods for  second-order ODEs. Consider $\Omega=0$ for the above
analysis and under this situation, ERKN integrators reduce to RKN
methods. Therefore, we are  now  in a position to present the
following volume-preserving property for RKN methods.
\begin{cor}\label{cor rkn}
Consider the following $s$-stage RKN methods
\begin{equation}
\left\{\begin{array}
[c]{ll}%
Q_{i} &
=q_{n}+hc_{i}q'_{n}+h^{2}\textstyle\sum\limits_{j=1}^{s}a_{ij}(c_i-c_j)\tilde{g}(Q_{j}),\ \   i=1,\ldots,s,\\
q_{n+1} & = q_{n}+h q'_{n}+h^{2}\textstyle\sum
\limits_{i=1}^{s}b_{i}(1-c_i)  \tilde{g}(Q_{i}),\\
q'_{n+1} & = q'_{n}+h\textstyle\sum\limits_{i=1}^{s}b_{i}
  \tilde{g}(Q_{i})
\end{array}\right.
\label{RKN rev co}%
\end{equation}
with the coefficients  $c=(c_1,\ldots,c_s)^{\intercal}, \
b=(b_1,\ldots,b_s)^{\intercal}$ and $A=(a_{ij})_{s\times s}$ of an
$s$-stage RK method.   If this RK method   is symplectic and
$b_j\neq0$ for all $j=1,\ldots,s,$ all one-stage and two-stage   RKN
methods \eqref{RKN rev co}  (and compositions thereof) are of volume
preservation for solving the second-order system $q''=\tilde{g}(q)$.
\end{cor}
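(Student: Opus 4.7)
The plan is to reduce the RKN case to the separable partitioned setting already handled by Theorem \ref{thm vp for S}, recognizing the RKN scheme \eqref{RKN rev co} as the $\Omega=0$ specialization of the ERKN scheme \eqref{ERKN rev co}. Substituting $\Omega=0$ gives $\phi_0(0)=I$ and $\phi_1(0)=I$, so \eqref{ERKN rev co} reduces exactly to \eqref{RKN rev co}. Correspondingly, the second-order system $q''=\tilde{g}(q)$ rewritten in first-order form fits \eqref{IVP1} with
$$K=\begin{pmatrix} 0 & I \\ 0 & 0 \end{pmatrix},\qquad g(q,p)=\begin{pmatrix} 0 \\ \tilde{g}(q) \end{pmatrix}.$$
A direct check with $P=\textmd{diag}(I,-I)$ gives $PKP^{-1}=-K$ and $Pg'(y)P^{-1}=-g'(y)$, so both $f$ and $g$ lie in $\mathcal{S}$, making the RKN scheme an SSEI method applied to a vector field in $\mathcal{S}$.

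For one-stage RKN methods, Theorem \ref{thm vp for S} applies immediately and closes the case. For two-stage RKN methods, Theorem \ref{thm vp for S} as stated assumes $c_1=c_2$, but as noted in the remark following that theorem the assumption may be dropped for special $K$. Here the structure to exploit is that $g'(y)=\begin{pmatrix} 0 & 0 \\ \tilde{g}'(q) & 0 \end{pmatrix}$ satisfies $g'(k_i)g'(k_j)=0$ for all $i,j$, and $e^{xK}=\begin{pmatrix} I & xI \\ 0 & I \end{pmatrix}$ is unipotent with a single nilpotent block.

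The core technical step is therefore to verify the identity \eqref{resul 2} without assuming $c_1=c_2$. The $h^2 a_{11}a_{22}$ product terms vanish by the nilpotency observation, and a direct computation of $e^{(c_1-c_2)hK} g'(k_2) e^{(c_2-c_1)hK} g'(k_1)$ produces a matrix supported only in the $(1,1)$ and $(2,1)$ blocks, with $(1,1)$-block equal to $-((c_1-c_2)h)^2 \tilde{g}'(Q_2)\tilde{g}'(Q_1)$. Consequently both matrices inside the determinants in \eqref{resul 2} are block lower triangular with identical $(1,1)$ blocks and $(2,2)$ blocks equal to $I$; their determinants therefore coincide independently of the off-diagonal $(2,1)$ block and hence independently of the sign of $c_1-c_2$. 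This verifies the hypothesis of Lemma \ref{lem VP condition} and establishes volume preservation.

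Closure under composition is automatic since the composition of volume-preserving maps is volume preserving. The main obstacle is really the two-stage calculation, but once one writes out $e^{xK}$ and $g'$ explicitly and notices $g'(k_i)g'(k_j)=0$, the argument collapses to a short block-triangular determinant computation.
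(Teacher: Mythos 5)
Your proof is correct, and it follows the same basic reduction as the paper — recognize \eqref{RKN rev co} as the $\Omega=0$ specialization of the ERKN scheme \eqref{ERKN rev co}, i.e.\ as an SSEI method applied to a first-order system whose vector field lies in $\mathcal{S}$ with $P=\mathrm{diag}(I,-I)$ — but you supply a step the paper only gestures at. The paper derives Corollary \ref{cor rkn} by citing the ERKN corollary, yet that corollary restricts two-stage methods to $c_1=c_2$, whereas Corollary \ref{cor rkn} does not; the paper covers this discrepancy only by the earlier remark that condition $c_1=c_2$ "is not necessary for some special matrix $K$" and by a separate remark invoking the Hairer--Lubich--Wanner result for symplectic RK methods on separable systems. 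Your explicit computation is exactly the missing verification: with $K=\left(\begin{smallmatrix}0&I\\0&0\end{smallmatrix}\right)$ one has $e^{xK}=\left(\begin{smallmatrix}I&xI\\0&I\end{smallmatrix}\right)$, $g'(k_i)g'(k_j)=0$, and $e^{(c_1-c_2)hK}g'(k_2)e^{(c_2-c_1)hK}g'(k_1)$ has $(1,1)$-block $-\big((c_1-c_2)h\big)^2\tilde{g}'(Q_2)\tilde{g}'(Q_1)$ (even in $c_1-c_2$) with only the $(2,1)$-block depending on the sign, so both sides of \eqref{resul 2} are block lower triangular with the same diagonal blocks and the VP condition of Lemma \ref{lem VP condition} holds for arbitrary $c_1,c_2$. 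This is a genuine improvement in completeness over the paper's own presentation; the paper's alternative HLW route buys independence from the exponential-integrator machinery, while your route stays entirely inside the paper's framework and makes the two-stage case airtight.
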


\begin{rem}
It is noted that the fact of this corollary can be derived in a
different way. Hairer, Lubich and Wanner have proved in
\cite{hairer2006} that any symplectic  RK method with at most two
stages (and any composition of such methods) is volume preserving
for separable divergence free vector fields. Rewriting the
second-order equation $q''=\tilde{g}(q)$ as a first-order system and
applying symplectic RK methods to it implies the result of Corollary
\ref{cor rkn}.  It is seen that the  analysis of volume-preserving
ERKN integrators provides an alternative derivation of the volume
preservation of RKN methods.
\end{rem}

\subsection{Other applications}
It has been shown in \cite{Webb2016} that $\mathcal{F}^{(\infty)}$
contains the affine vector fields $f(y)=Ly+d$ such that
$\abs{I+\frac{h}{2}L}=\abs{I-\frac{h}{2}L}$ for all $h>0$. For
solving the system in the affine vector fields, the exponential
integrator \eqref{EI}  becomes
\begin{equation*}
\left\{\begin{array}[c]{ll} &k_i=e^{c_i
hL}y_n+h\sum\limits_{j=1}^{s}\bar{a}_{ij}(hL)d,\ \ i=1,2,\ldots,s,
\\
&y_{n+1}=e^{ hL}y_n+h\sum\limits_{i=1}^{s}\bar{b}_{i}(hL)d.
\end{array}\right.
\end{equation*}
In the light of Theorem  \ref{thm vp for FIN},  this SSEI method is
of volume preservation for the   affine  vector fields.

It was also noted in \cite{Webb2016} that $\mathcal{F}^{(\infty)}$
contains the vector fields $f(y)$ such that $f'(y)=JS(y)$  with a
skew-symmetric matrix  $J$ and the symmetric matrix $S(y)$. Assume
that
\begin{equation}\label{KGspe}K=JM,\ \
g'(y)=JS(y),\end{equation} where $M$ is a symmetric matrix. The
system  \eqref{IVP1} with the vector field \eqref{KGspe} falls into
$\mathcal{F}^{(\infty)}$. Thus all SSEI methods are of volume
preservation for the   vector field \eqref{KGspe}.

\section{Numerical examples} \label{sec:examples}
In order to show the performance of SSEI methods,  the solvers
chosen for comparison are:
\begin{itemize}
\item SSRK1: the Gauss-Legendre method   of order two;
\item SSEI1: the one-stage SSEI method with the coefficients \eqref{rev
co}, where   the RK method  \eqref{RK co} is chosen as  SSRK1;
\item SSRK2:  the Gauss-Legendre method   of order four;
\item SSEI2: the two-stage SSEI method with the coefficients \eqref{rev
co},  where the RK method  \eqref{RK co} is chosen as  SSRK2.
\end{itemize}
It is noted  that all these methods are  general implicit and we use
fixed-point iteration here. We
  set $10^{-16}$ as the error tolerance
 and $100$ as the maximum number of each fixed-point iteration.

\vskip2mm \vskip2mm\noindent\textbf{Problem 1.}  As the first
numerical example, we consider  the Duffing equation defined by
\begin{equation*}
\begin{aligned}& \left(
                   \begin{array}{c}
                     q \\
                      p \\
                   \end{array}
                 \right)
'= \left(
    \begin{array}{cc}
      0& 1\\
       -\omega^{2}-k^{2} &0 \\
    \end{array}
  \right)\left(
                   \begin{array}{c}
                     q \\
                      p \\
                   \end{array}
                 \right)+
\left(
                                                                           \begin{array}{c}
                                                                           0\\
2k^{2}q^{3}
                                                                           \end{array}
                                                                         \right),\
                                                                         \ \left(
                                                                            \begin{array}{c}
                                                                              q(0) \\
                                                                              p(0) \\
                                                                            \end{array}
                                                                          \right)=\left(
                                                                                    \begin{array}{c}
                                                                                      0 \\
                                                                                      \omega \\
                                                                                    \end{array}
                                                                                  \right).
\end{aligned}\end{equation*}
The exact solution of this system is $q(t)=sn(\omega t;k/\omega)$
with the  Jacobi elliptic function  $sn$. Since it is a Hamiltonian
system, all the methods chosen for comparison  are volume
preserving. For this problem, we choose $k=0.07 $ and $\omega=20$
and  then  solve it  on  the interval $[0, 100]$ with different
stepsizes $h=1/2,1/10,1/50,1/200$. The numerical flows at the time
points $\{\frac{1}{2}i\}_{i=1,\ldots,200}$  of the four methods are
given in Figure \ref{p1}.  From the results, it can be observed
clearly that the  integrators SSEI1 and SSEI2 perform better than
Runge-Kutta methods in the preservation of volume. Finally, we
integrate this problem in $[0,t_{\textmd{end}}]$ with
 $h= 0.1/2^{i}$ for $i=1,\ldots,4.$  The  relative global errors for
 different $t_{\textmd{end}}$   are
presented in Figure \ref{p2}. These results show clearly again that
exponential integrators have better accuracy than Runge-Kutta
methods.
 \begin{figure}[ptb]
\centering
\includegraphics[width=3.0cm,height=3.0cm]{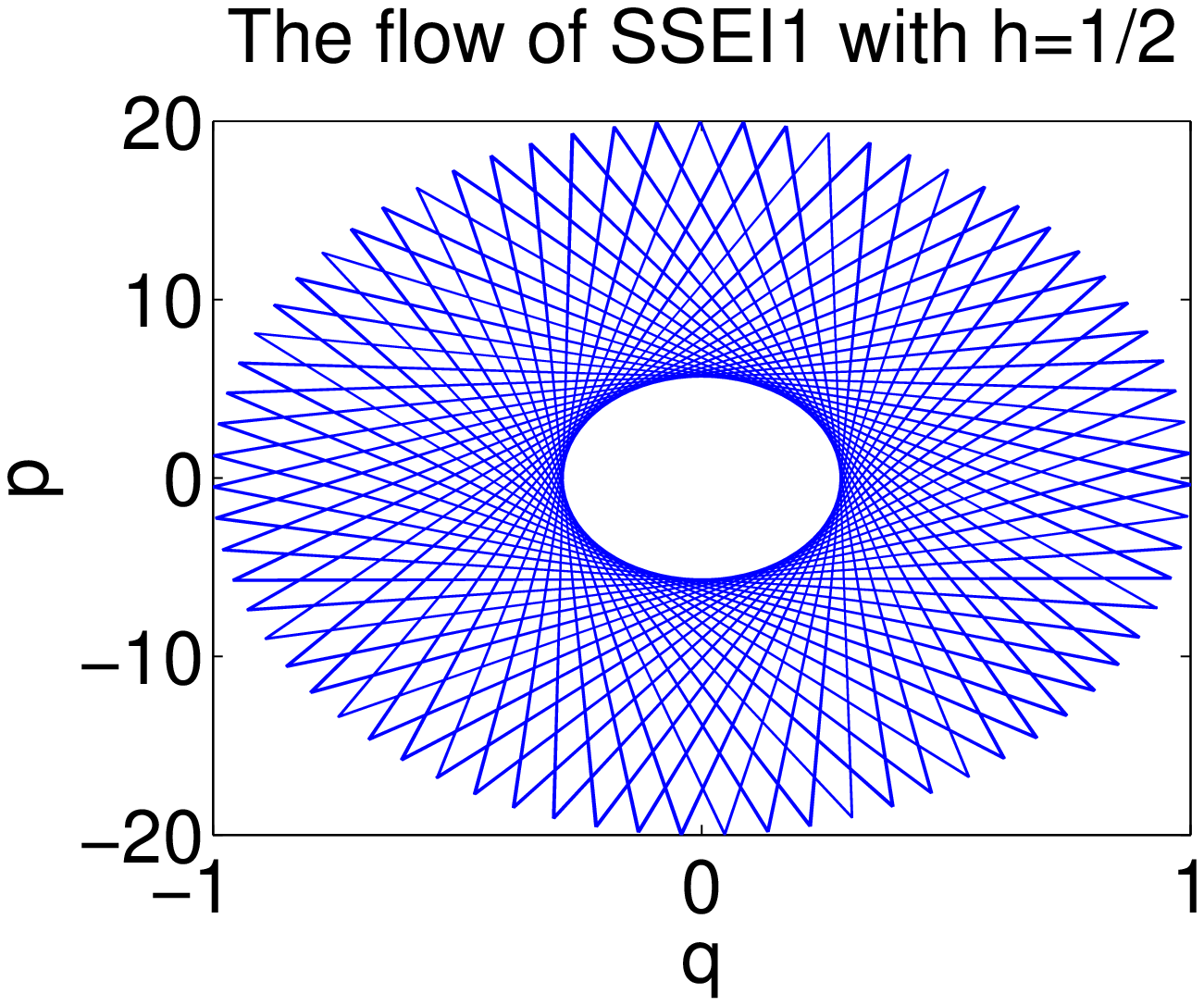}
\includegraphics[width=3.0cm,height=3.0cm]{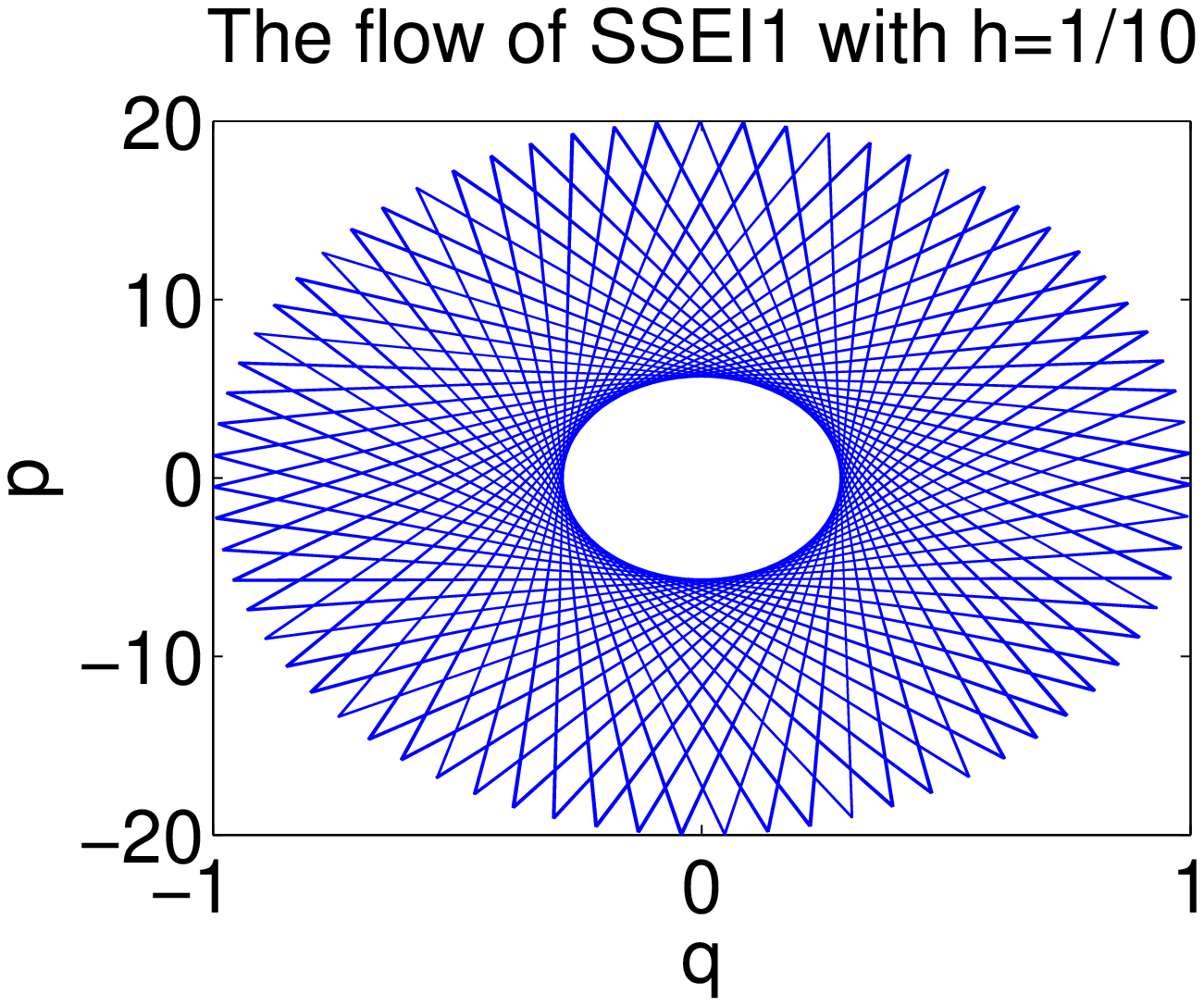}
\includegraphics[width=3.0cm,height=3.0cm]{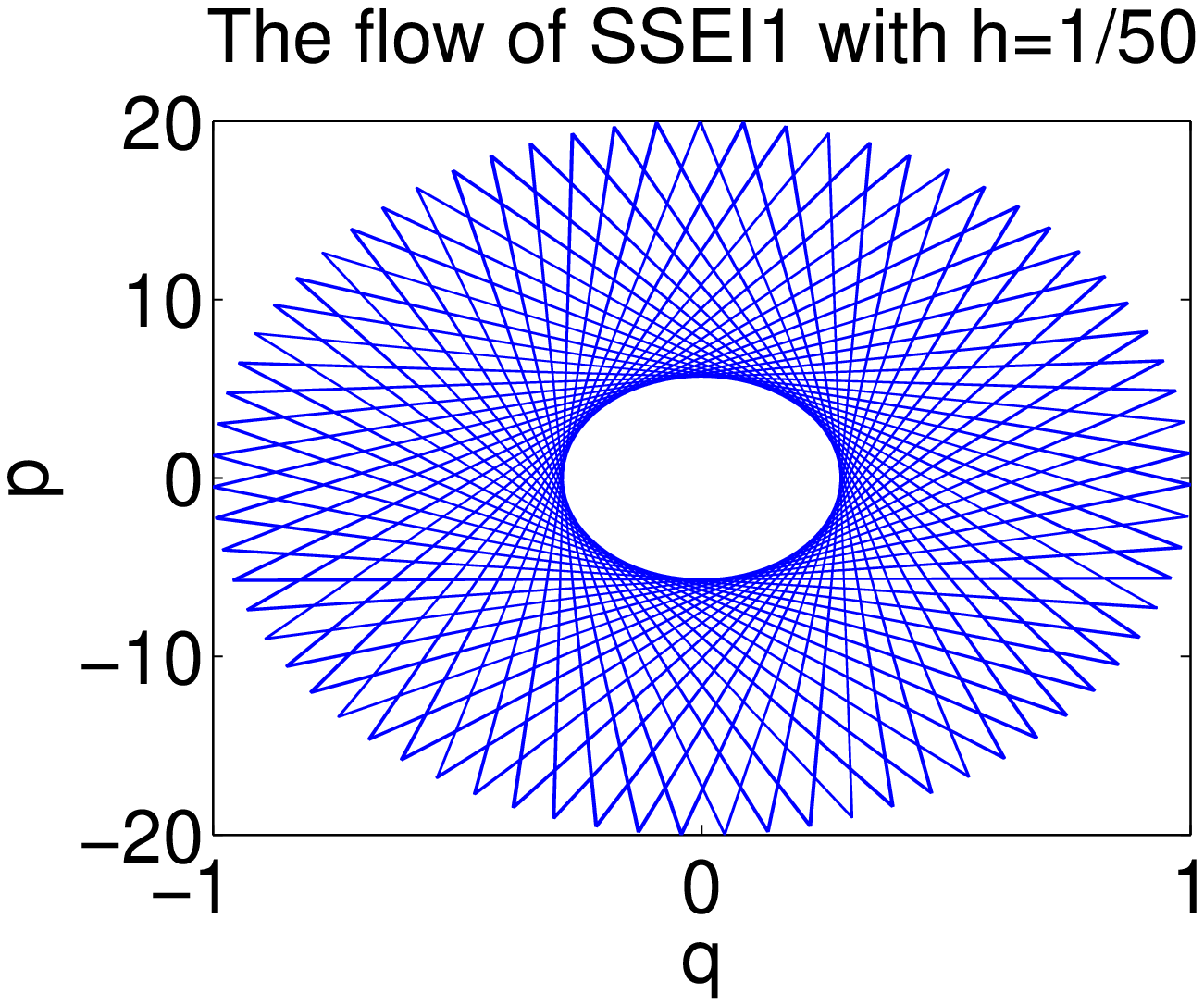}
\includegraphics[width=3.0cm,height=3.0cm]{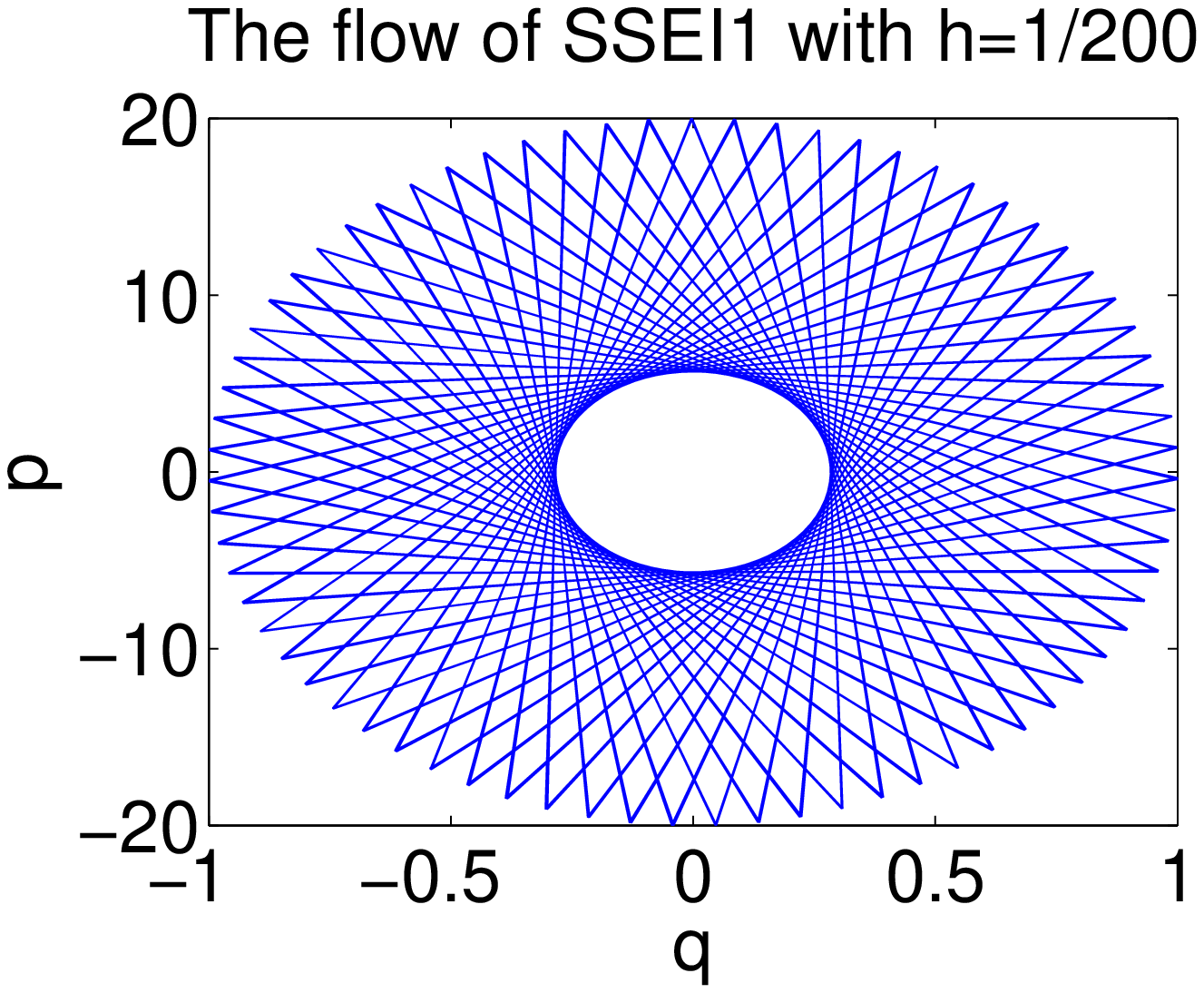}\\
\includegraphics[width=3.0cm,height=3.0cm]{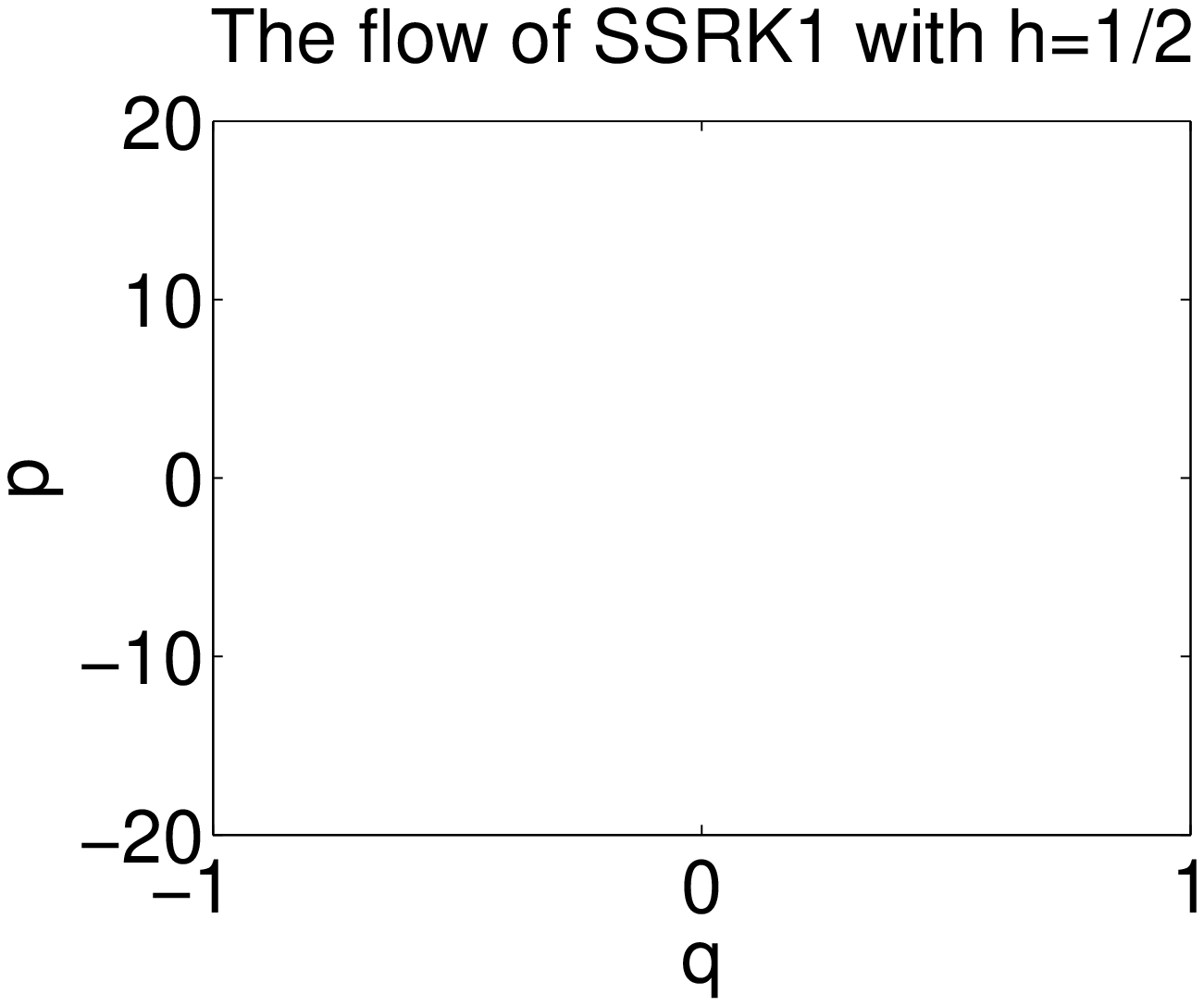}
\includegraphics[width=3.0cm,height=3.0cm]{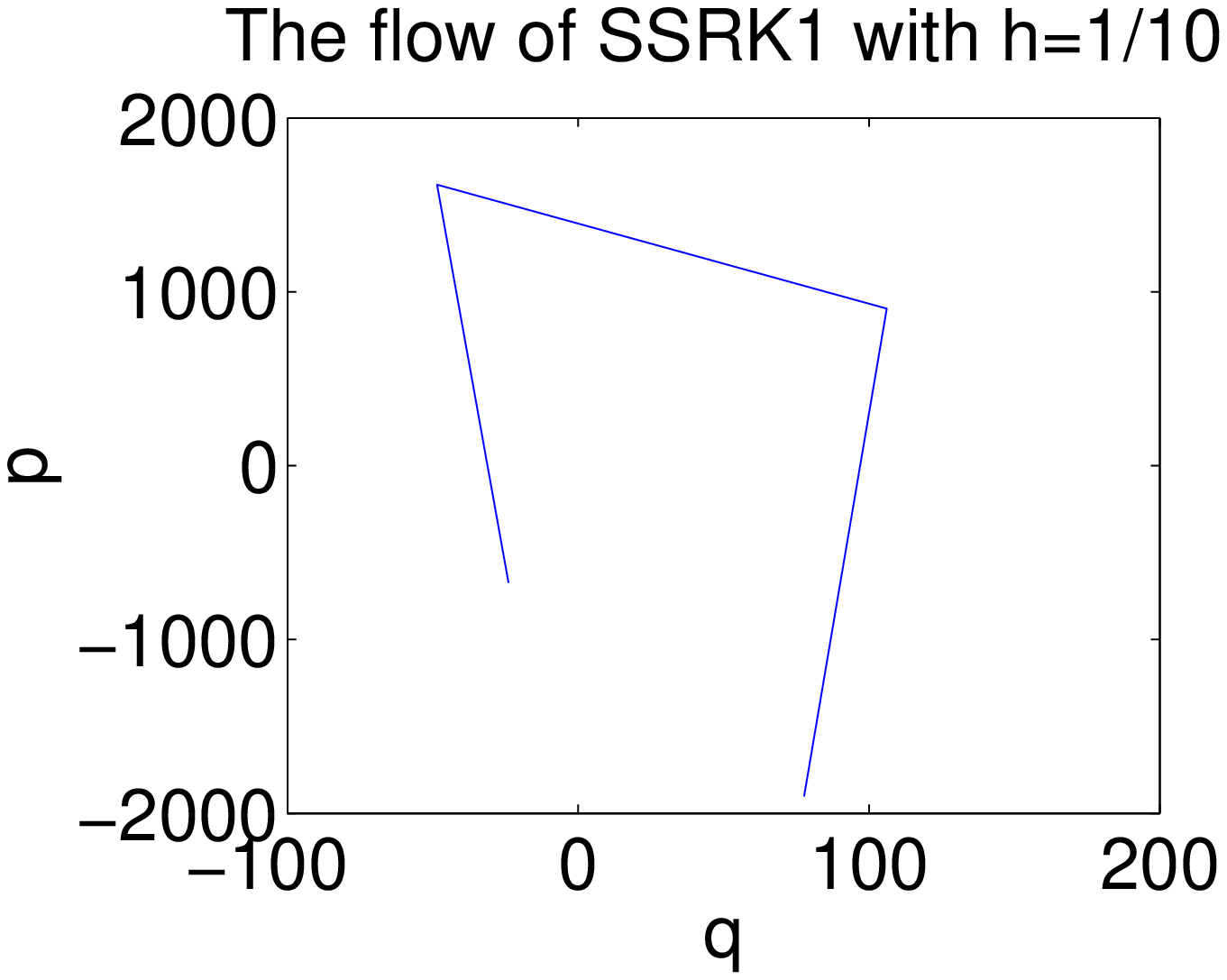}
\includegraphics[width=3.0cm,height=3.0cm]{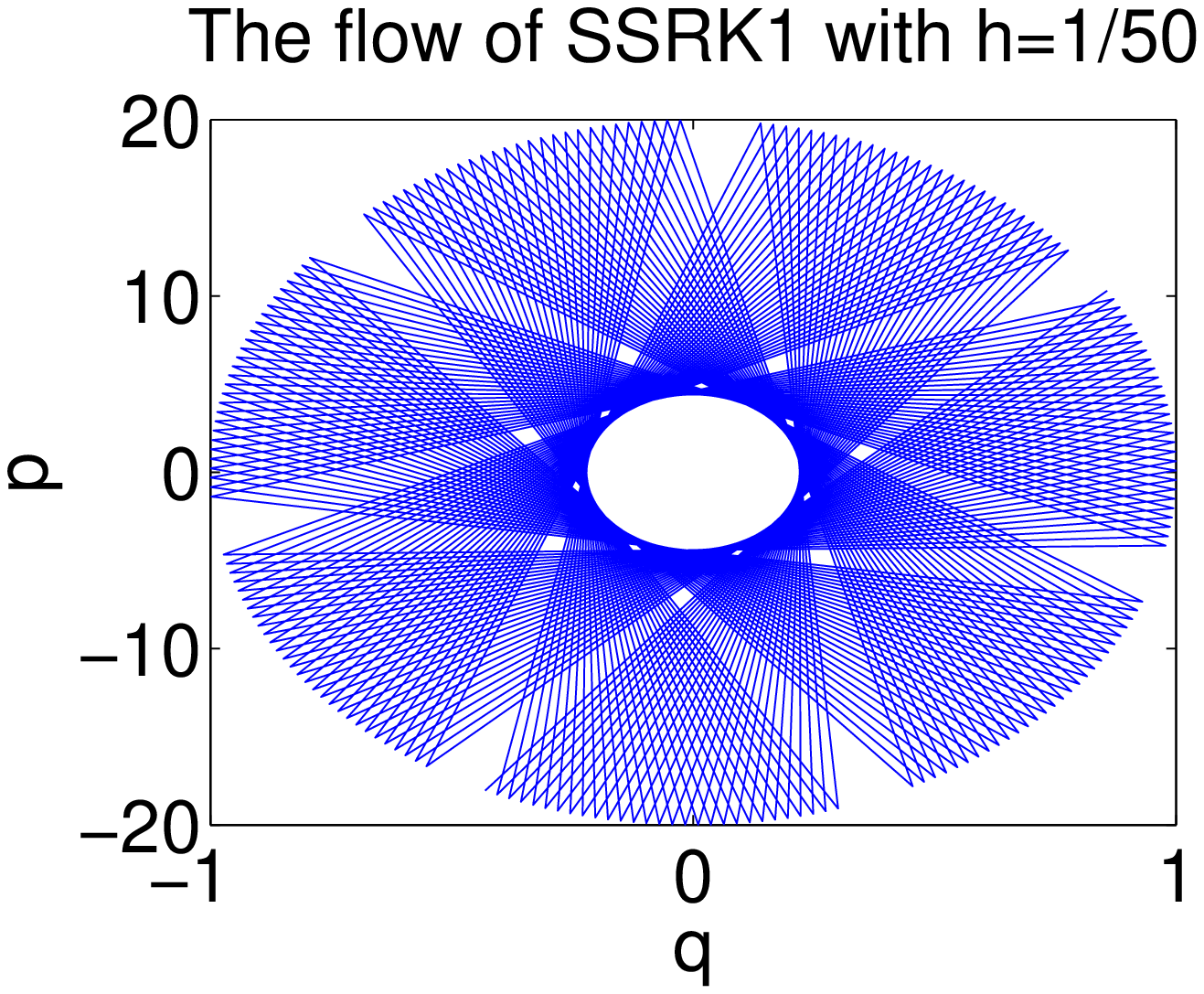}
\includegraphics[width=3.0cm,height=3.0cm]{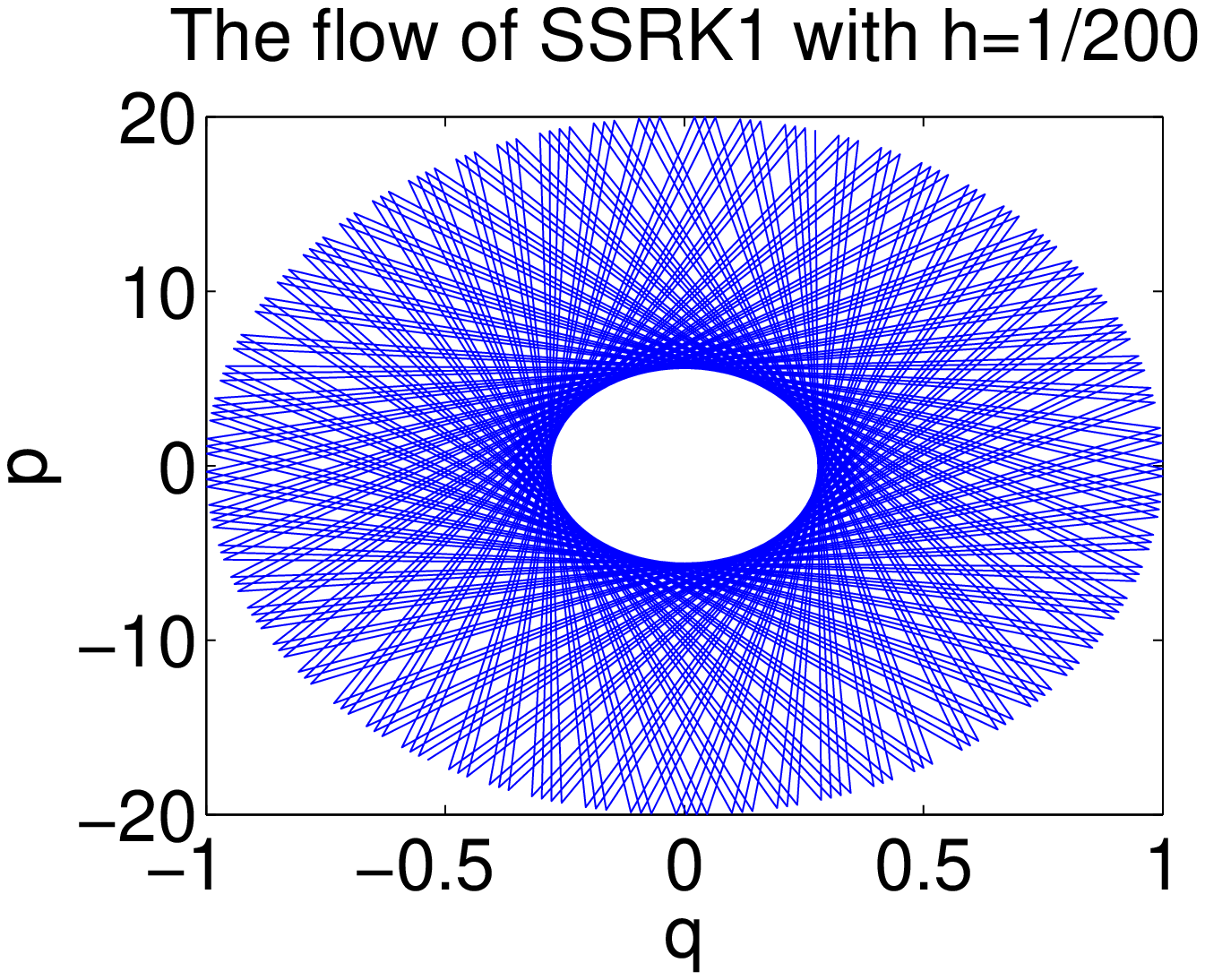}\\
\includegraphics[width=3.0cm,height=3.0cm]{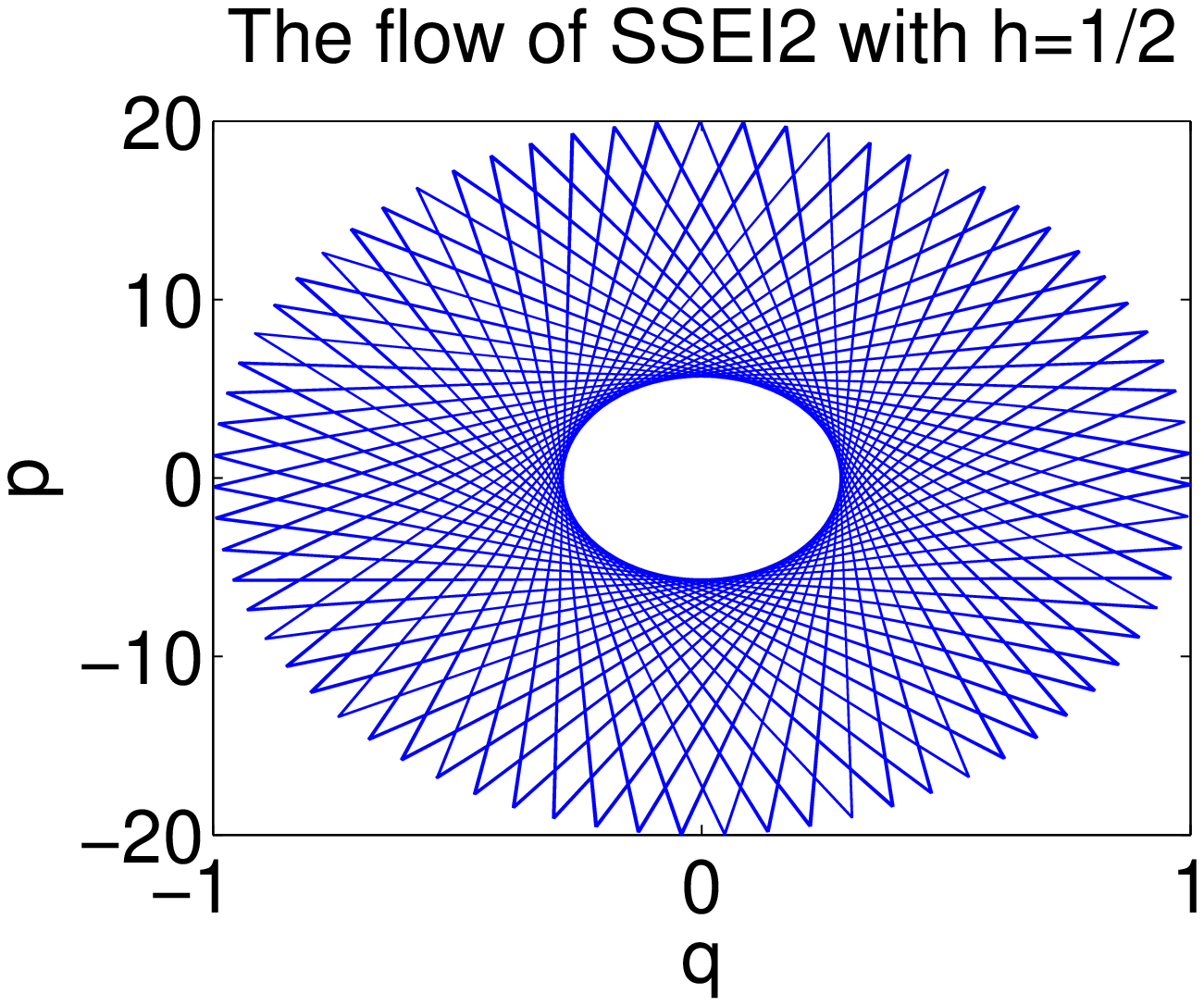}
\includegraphics[width=3.0cm,height=3.0cm]{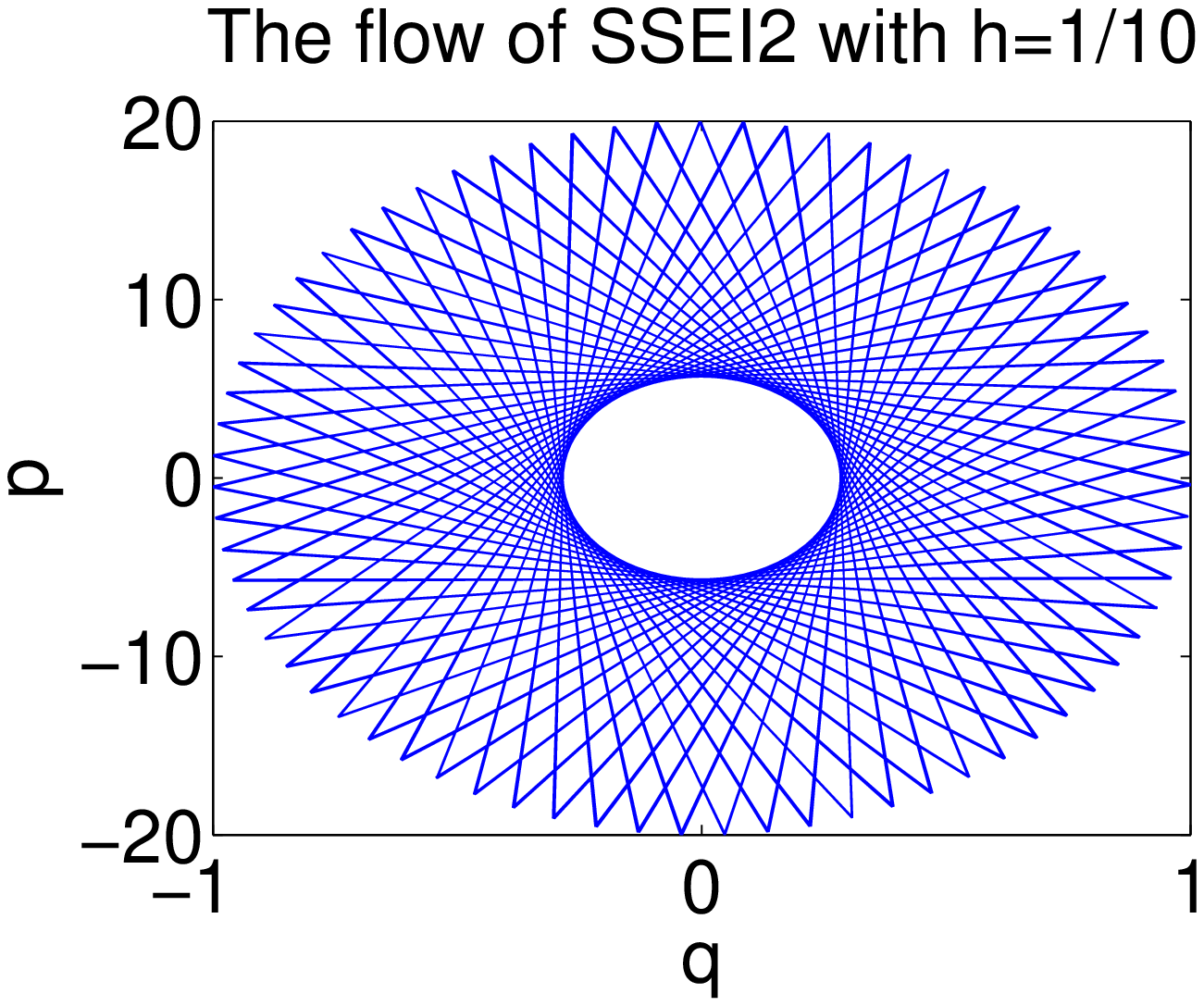}
\includegraphics[width=3.0cm,height=3.0cm]{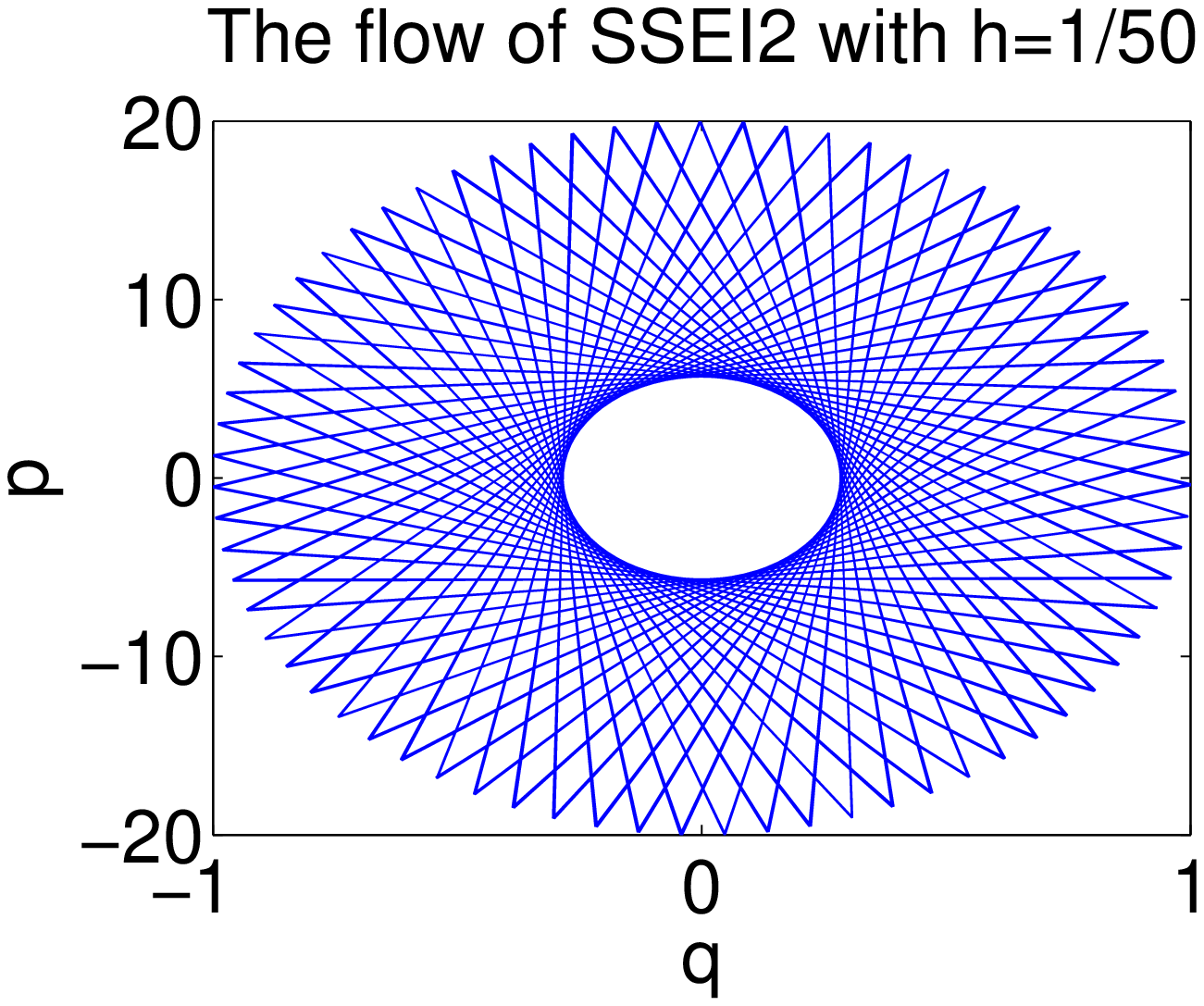}
\includegraphics[width=3.0cm,height=3.0cm]{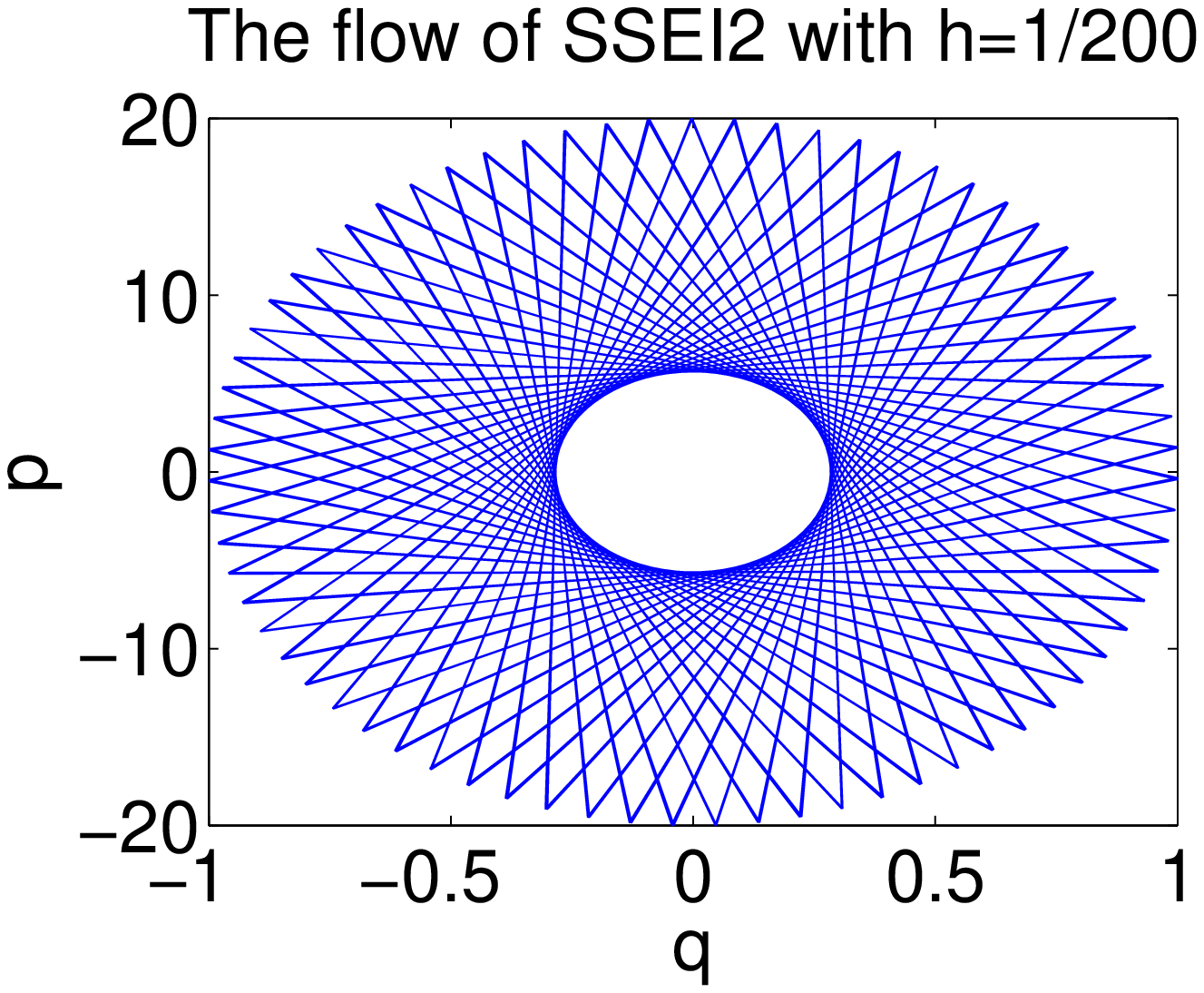}\\
\includegraphics[width=3.0cm,height=3.0cm]{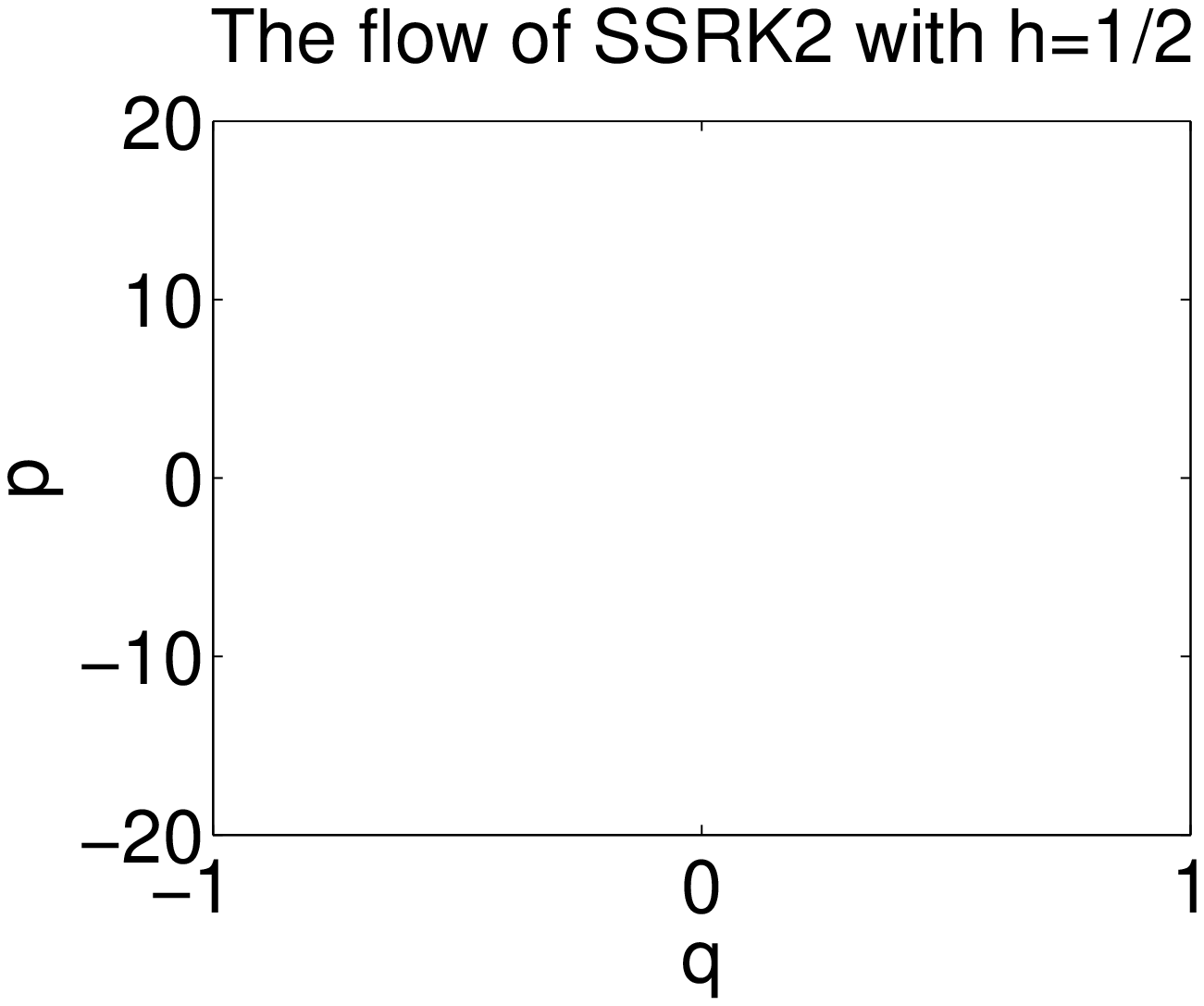}
\includegraphics[width=3.0cm,height=3.0cm]{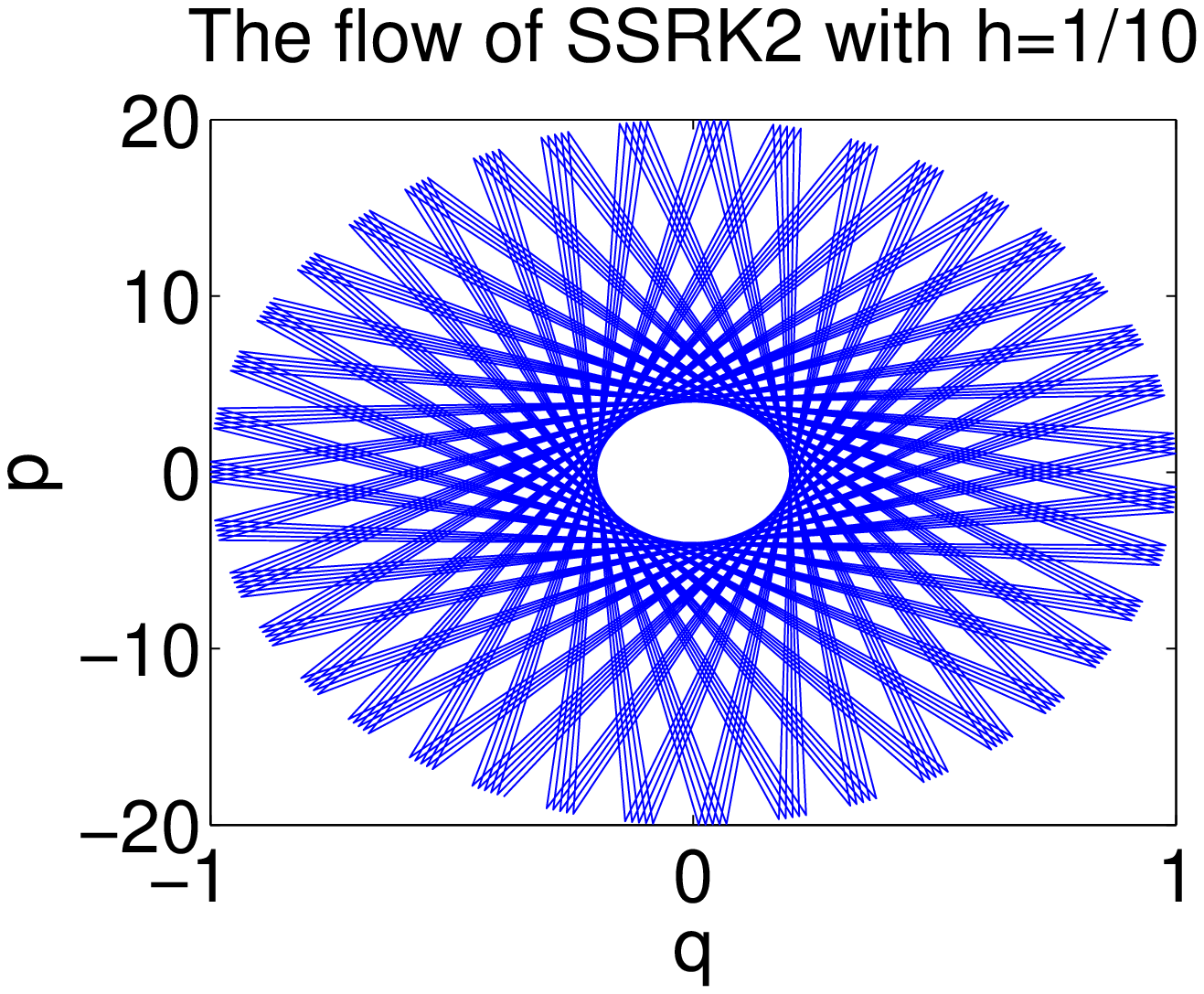}
\includegraphics[width=3.0cm,height=3.0cm]{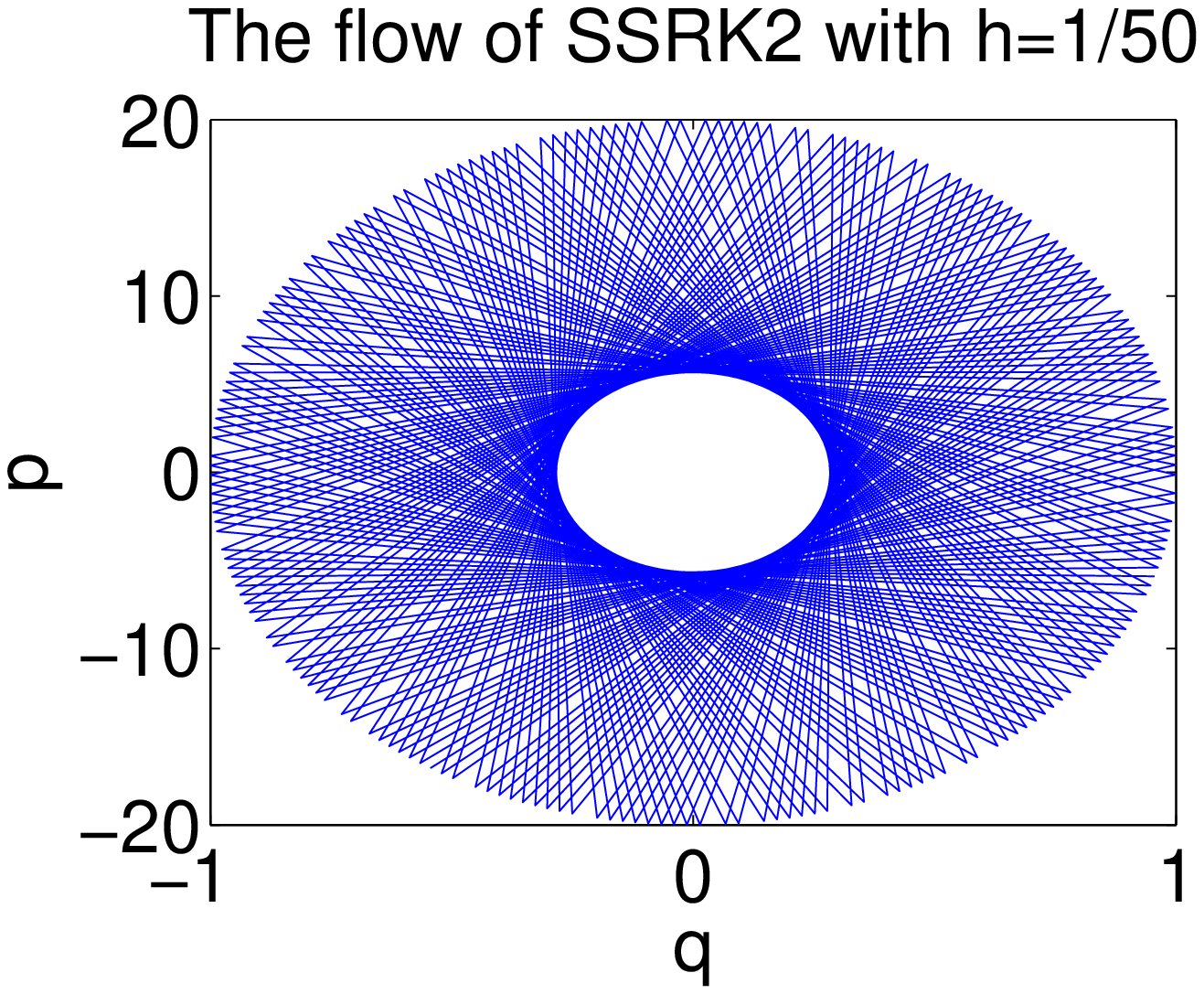}
\includegraphics[width=3.0cm,height=3.0cm]{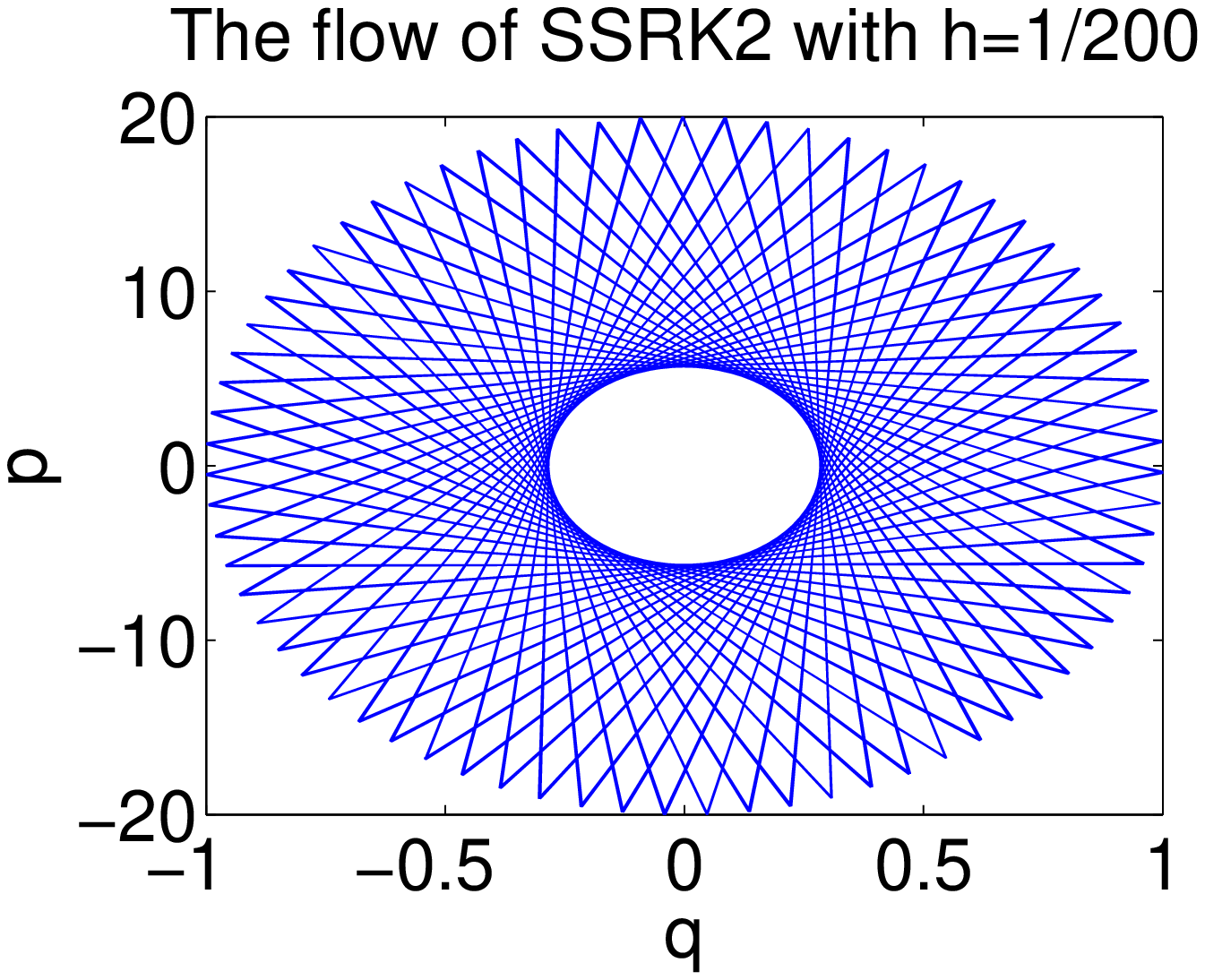}
\caption{Problem 1: the flows of different methods.} \label{p1}
\end{figure}
 \begin{figure}[ptb]
\centering
\includegraphics[width=3.8cm,height=4cm]{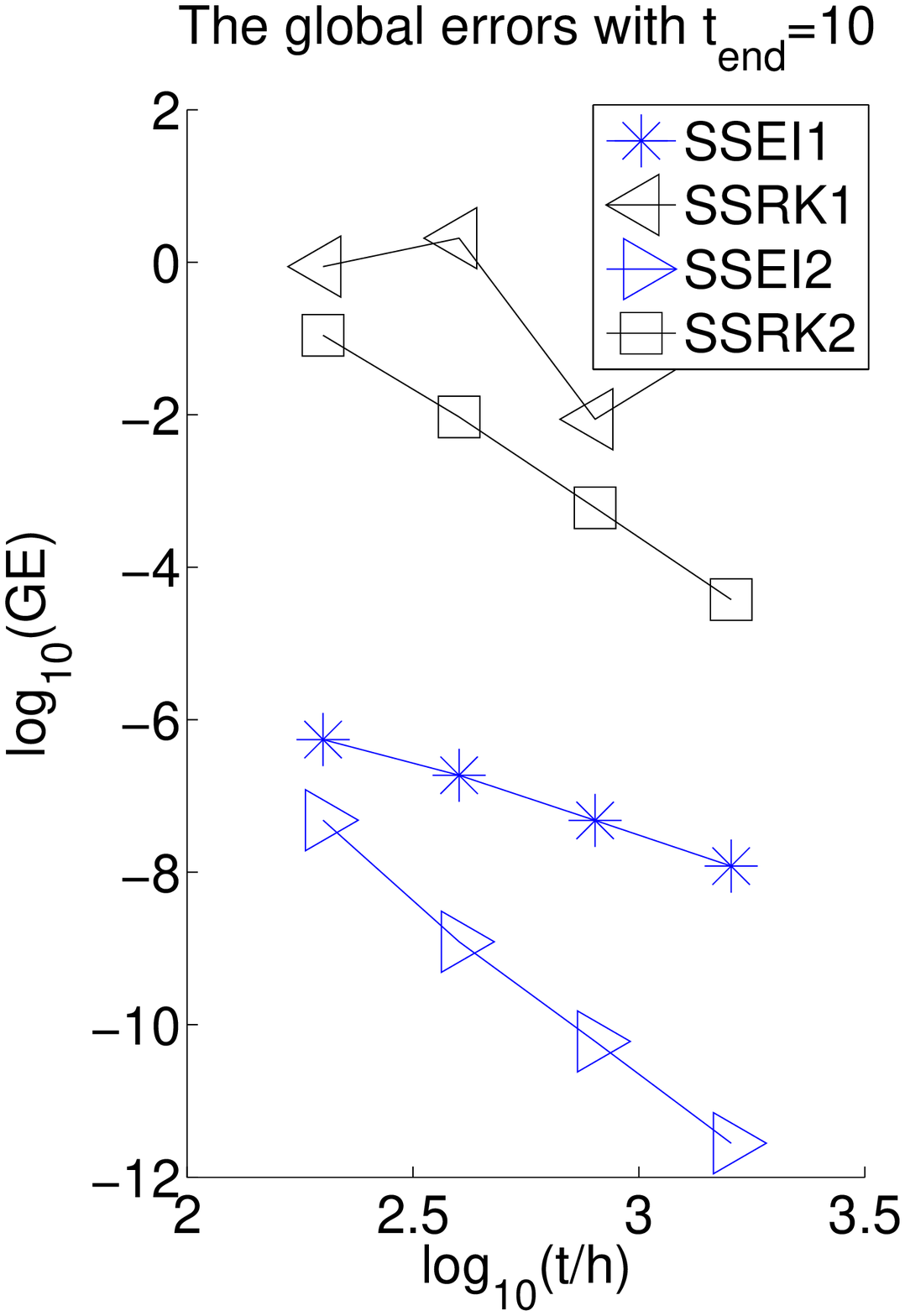}
\includegraphics[width=3.8cm,height=4cm]{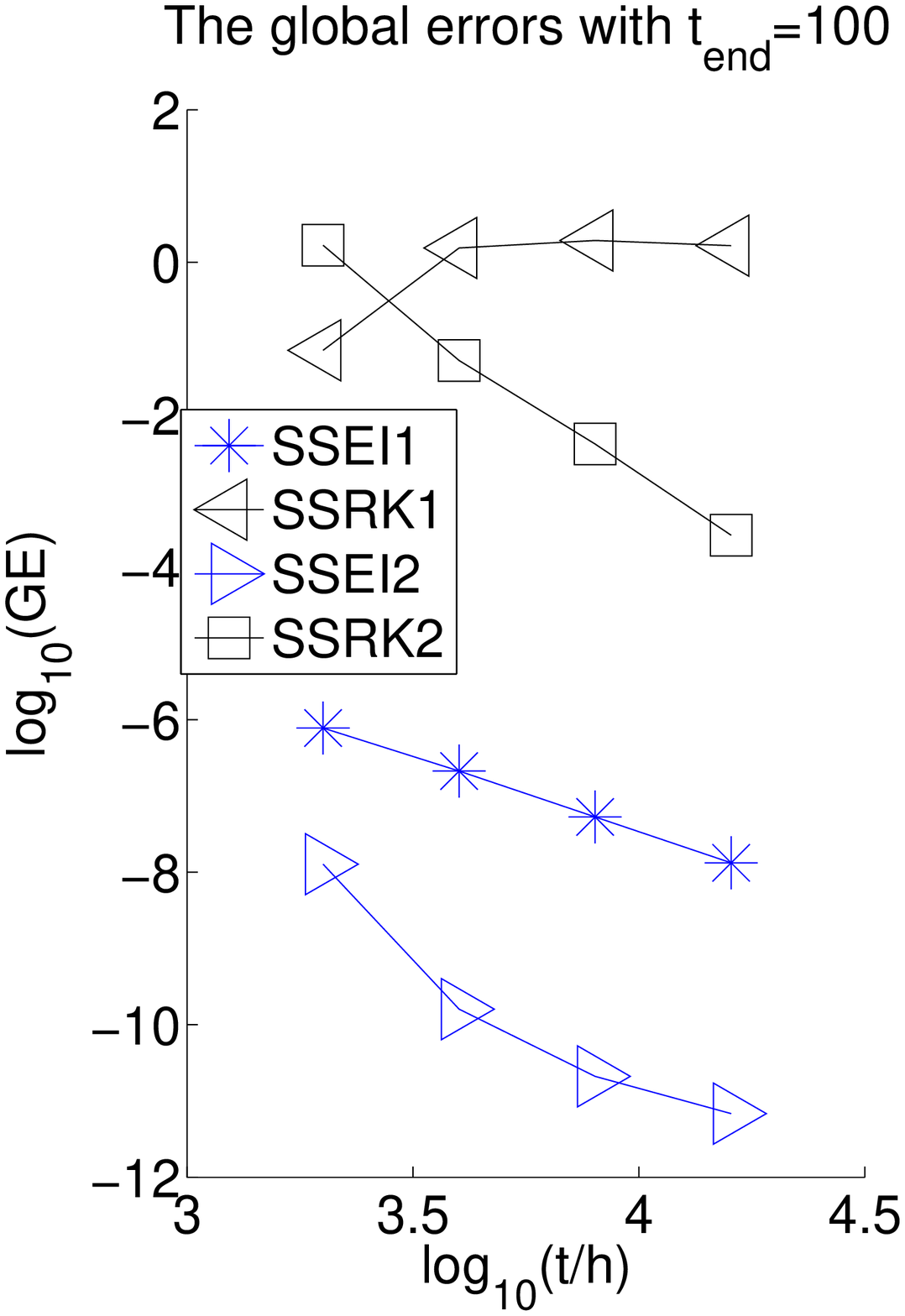}
\includegraphics[width=3.8cm,height=4cm]{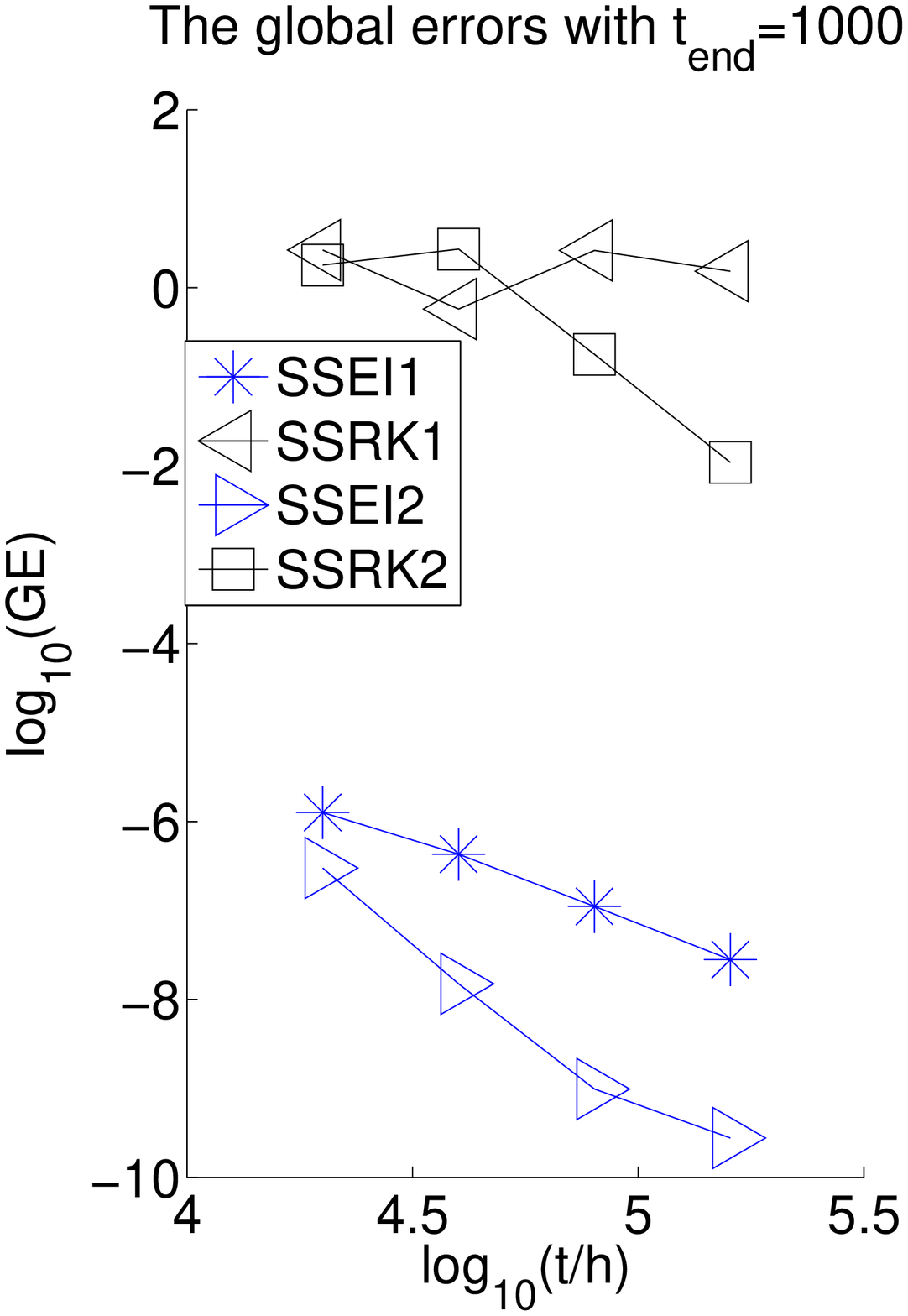}
\caption{Problem 1: the relative global errors.} \label{p2}
\end{figure}
\vskip2mm \noindent\textbf{Problem 2.} Consider the following
divergence free ODEs
$$\left(
    \begin{array}{c}
      x \\
      y \\
      z \\
    \end{array}
  \right)'=\left(
             \begin{array}{ccc}
               0 & -\omega & 0 \\
               \omega & 0 & -\omega \\
               0 & \omega & 0 \\
             \end{array}
           \right)\left(
    \begin{array}{c}
      x \\
      y \\
      z \\
    \end{array}
  \right)+\left(
            \begin{array}{c}
              \sin(x-z) \\
              0 \\
               \sin(x-z) \\
            \end{array}
          \right).
$$
By choosing $P=\left(
             \begin{array}{ccc}
               0 & 0 & 1 \\
               0 & 1 & 0 \\
              1 & 0 & 0 \\
             \end{array}
           \right),$
it can be checked that the vector field  of   this problem  falls
into $\mathcal{S}$. We consider $\omega=100$ and
$(0.5,0.5,0.5)^{\intercal}$ as the initial value. This problem is
firstly integrated  on $[0,100]$ with $h=1/50,1/100,1/200,1/400$ and
  the numerical flows $x$ and $y$   at the  time points
$\{\frac{1}{2}i\}_{i=1,\ldots,200}$ are shown in  Figure \ref{p2-1}.
Then the relative global errors for different $t_{\textmd{end}}$
with $h= 0.1/2^{i}$ for $i=2,\ldots,5$ are given in Figure
\ref{p2-2}. These results demonstrate clearly again that SSEI
methods perform better than SSRK methods.
 \begin{figure}[ptb]
\centering
\includegraphics[width=3.0cm,height=3.0cm]{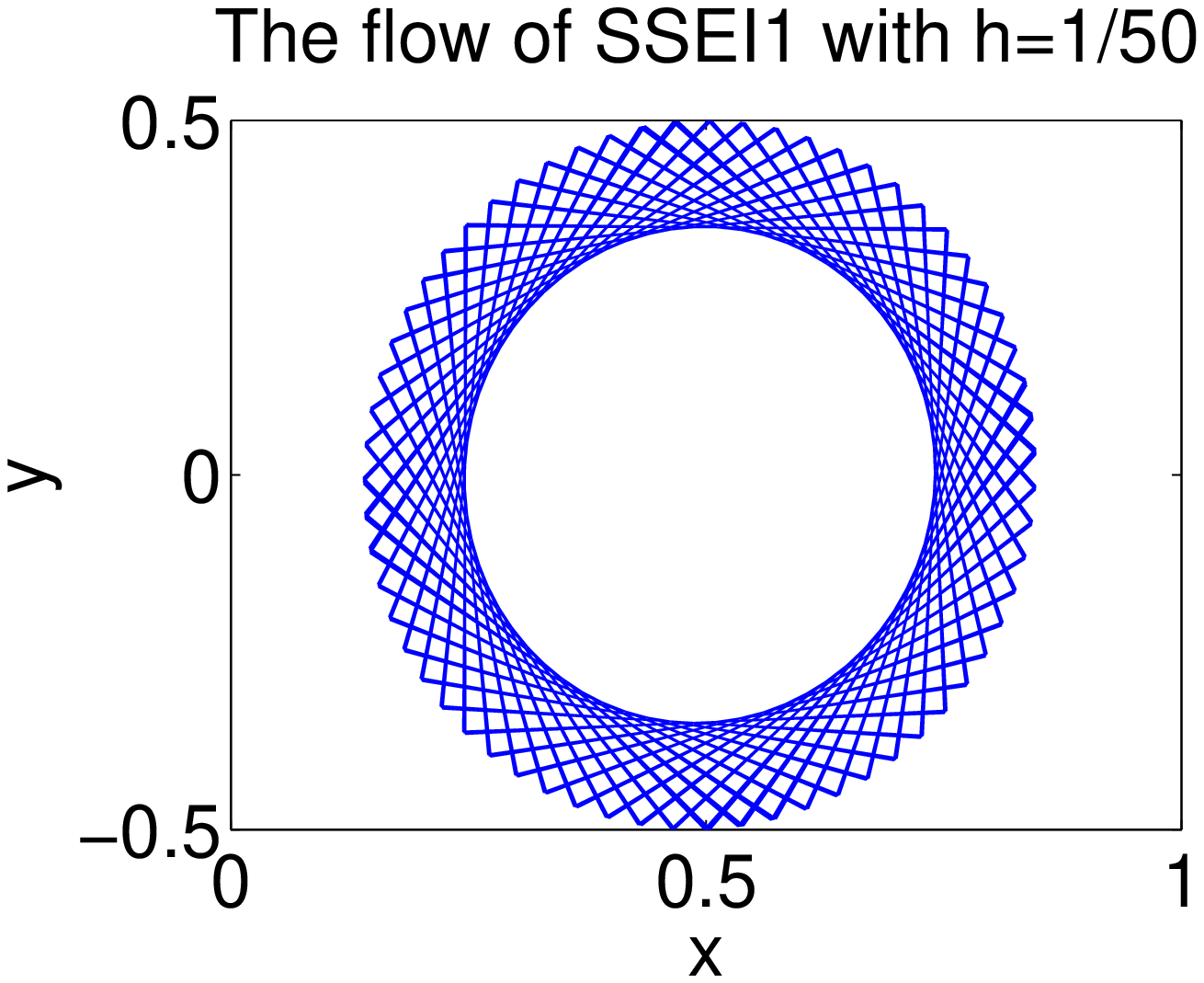}
\includegraphics[width=3.0cm,height=3.0cm]{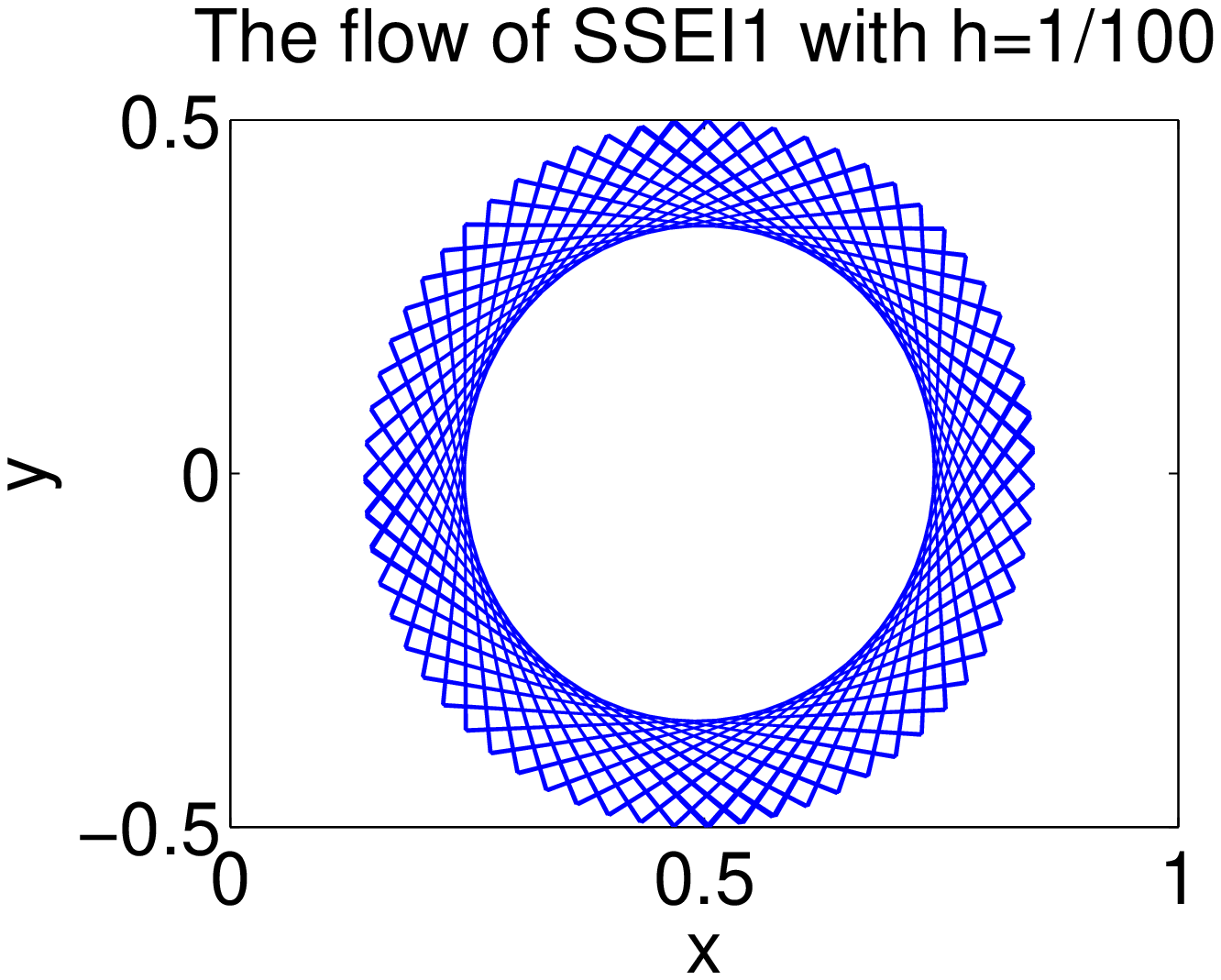}
\includegraphics[width=3.0cm,height=3.0cm]{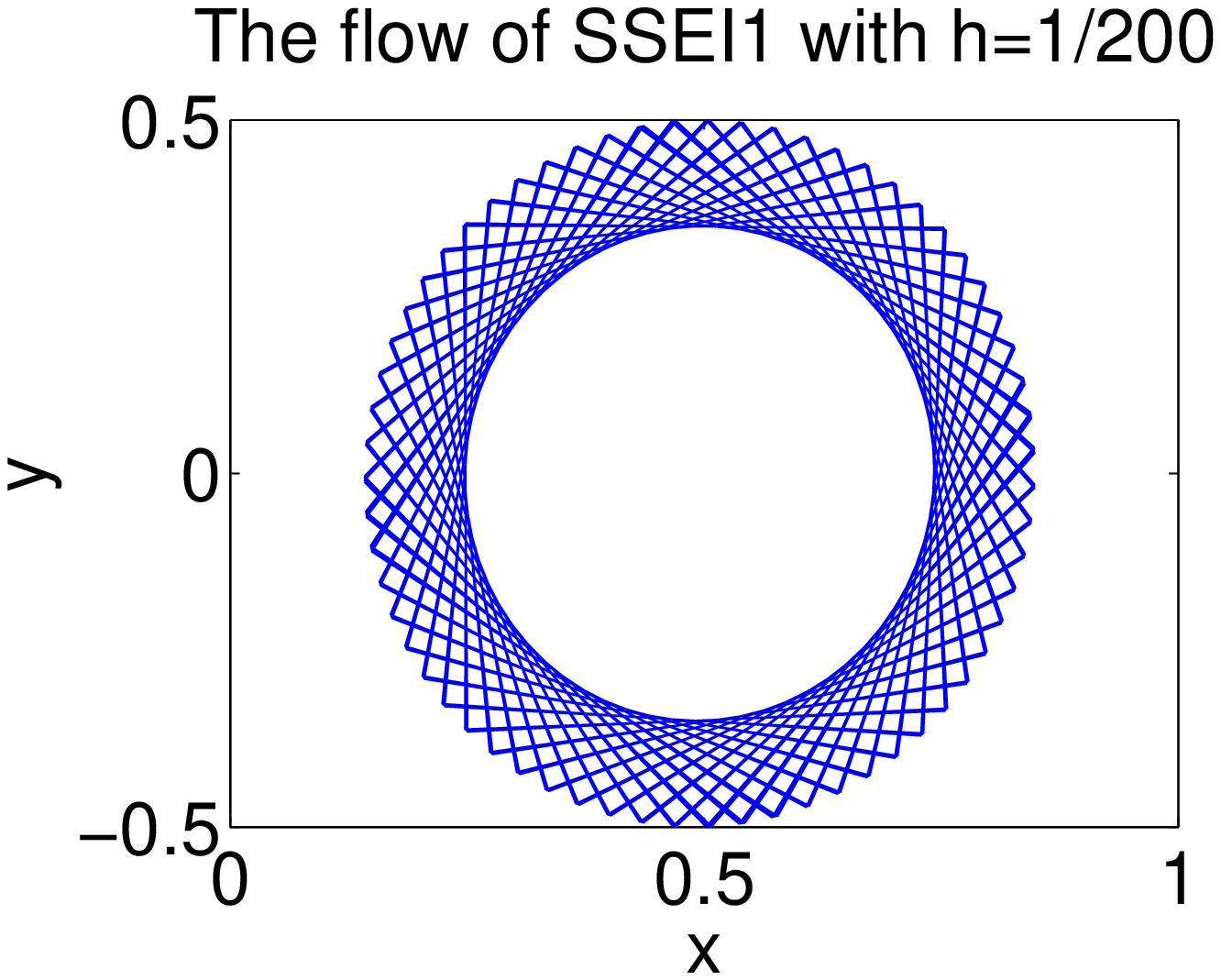}
\includegraphics[width=3.0cm,height=3.0cm]{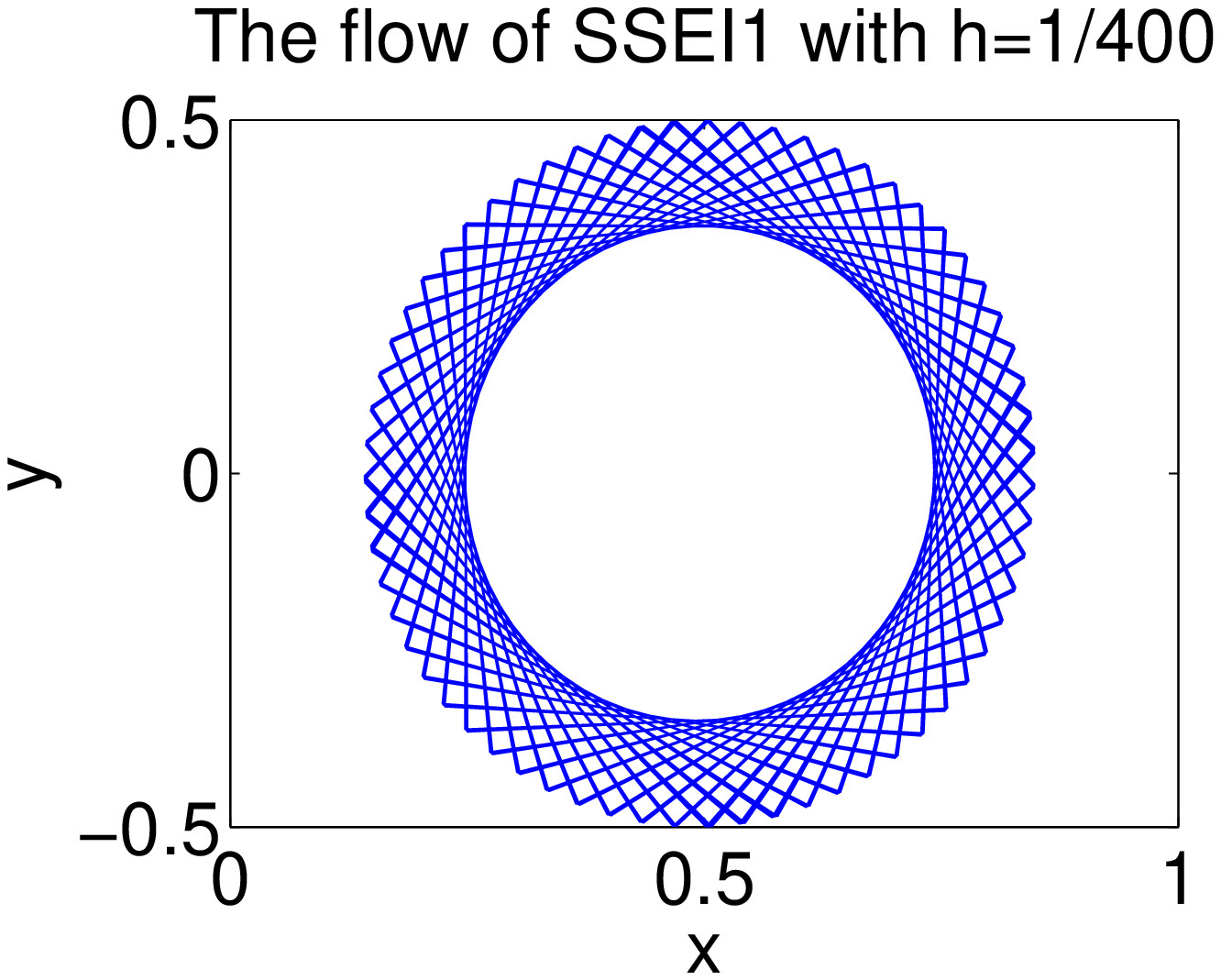}\\
\includegraphics[width=3.0cm,height=3.0cm]{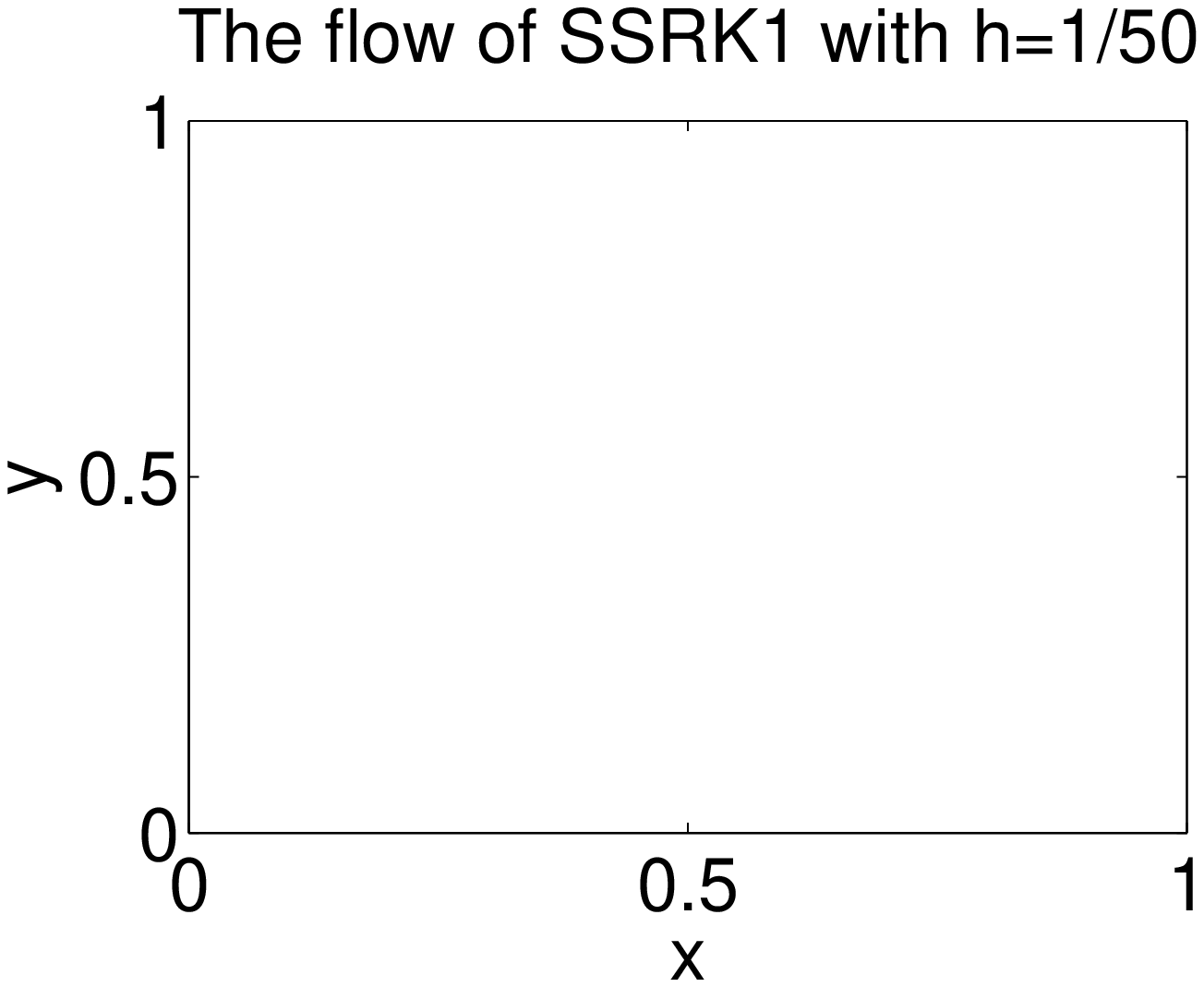}
\includegraphics[width=3.0cm,height=3.0cm]{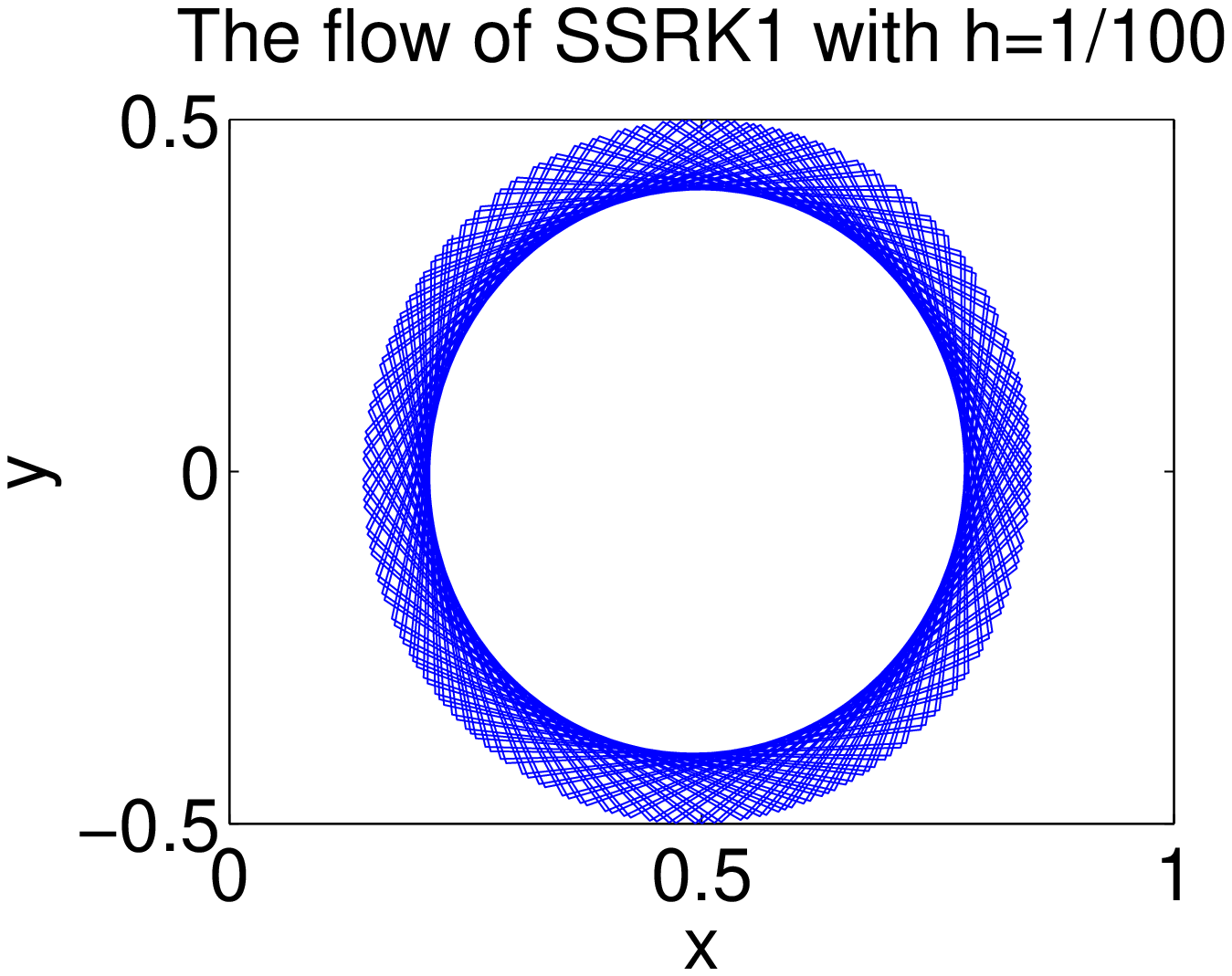}
\includegraphics[width=3.0cm,height=3.0cm]{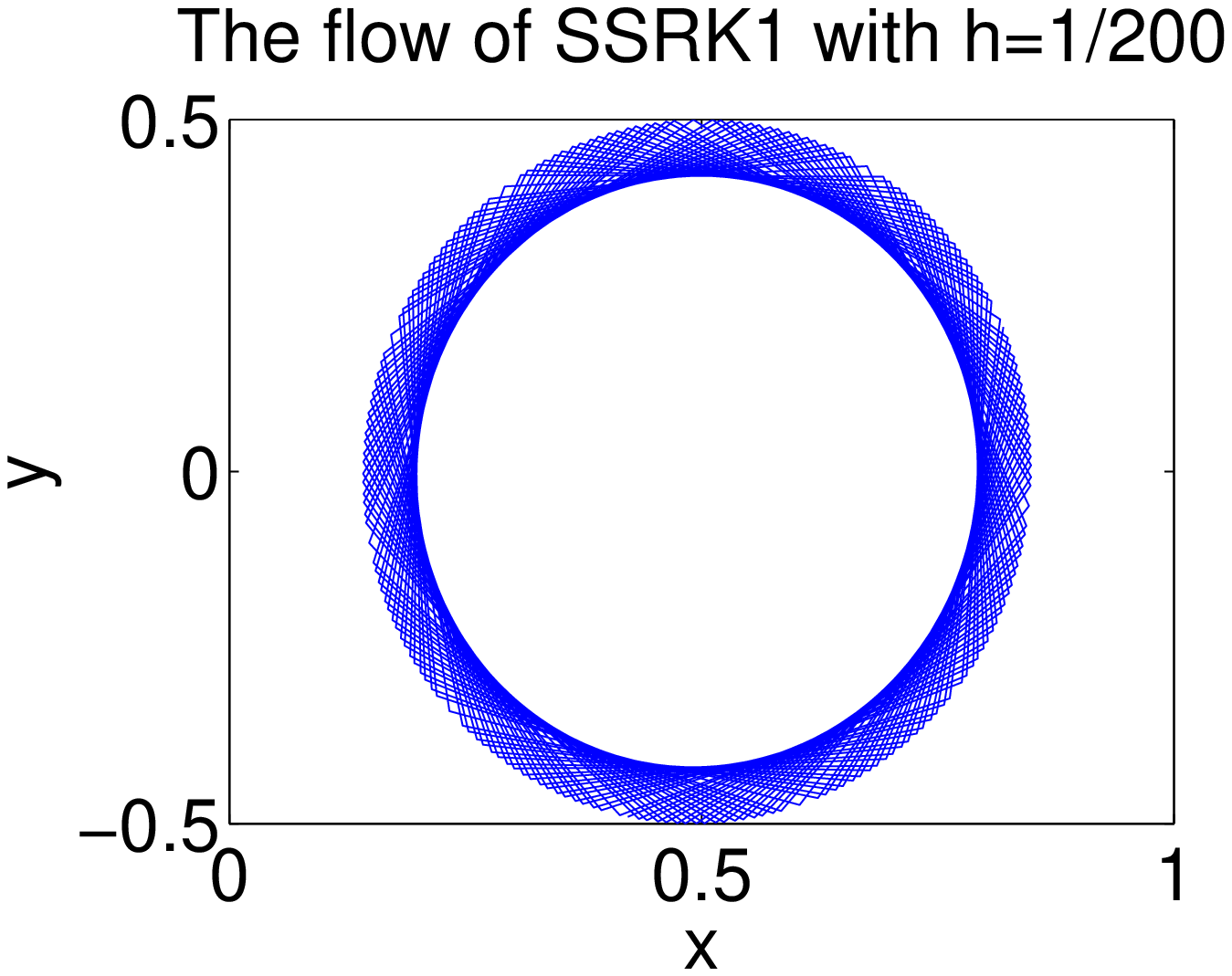}
\includegraphics[width=3.0cm,height=3.0cm]{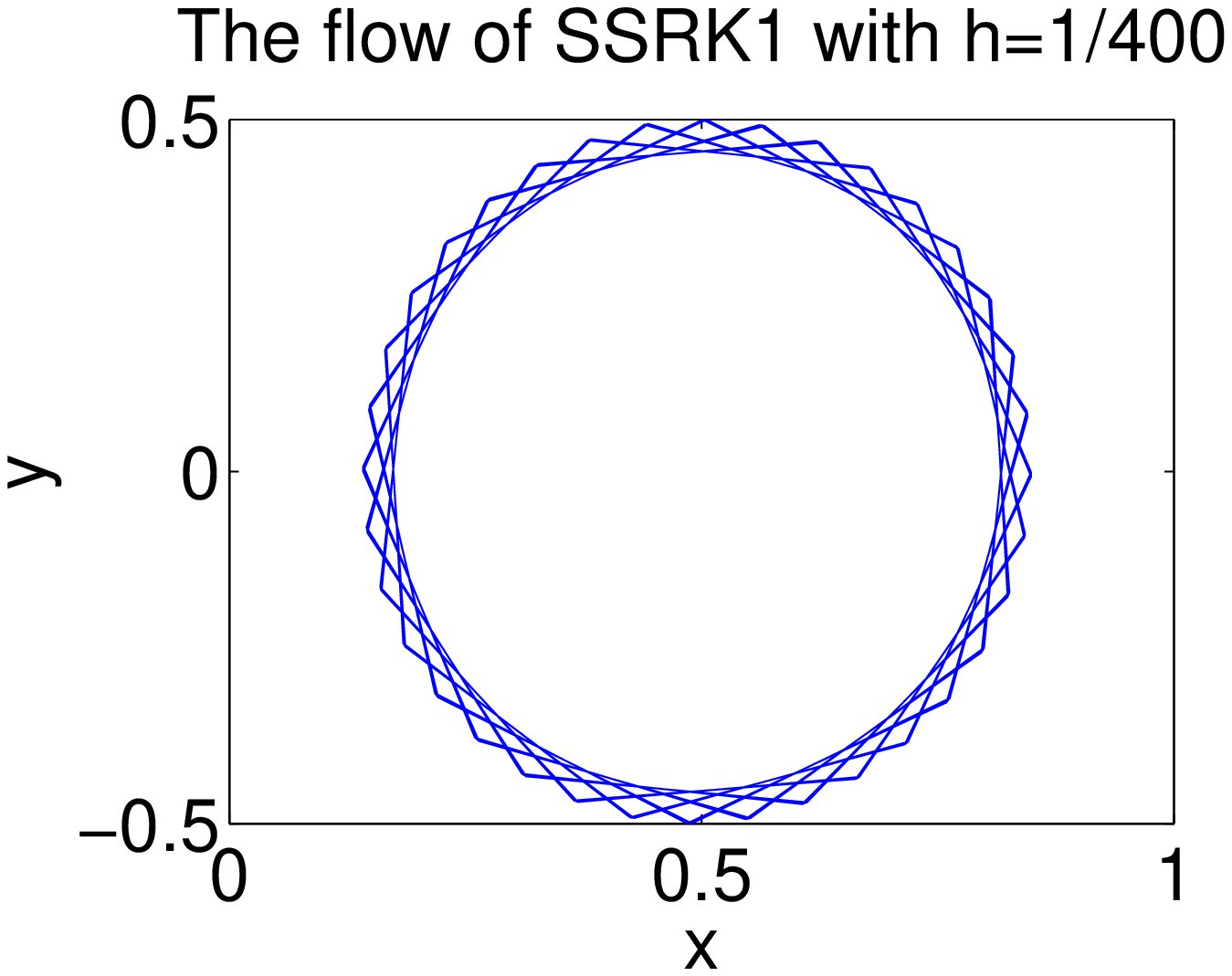}\\
\includegraphics[width=3.0cm,height=3.0cm]{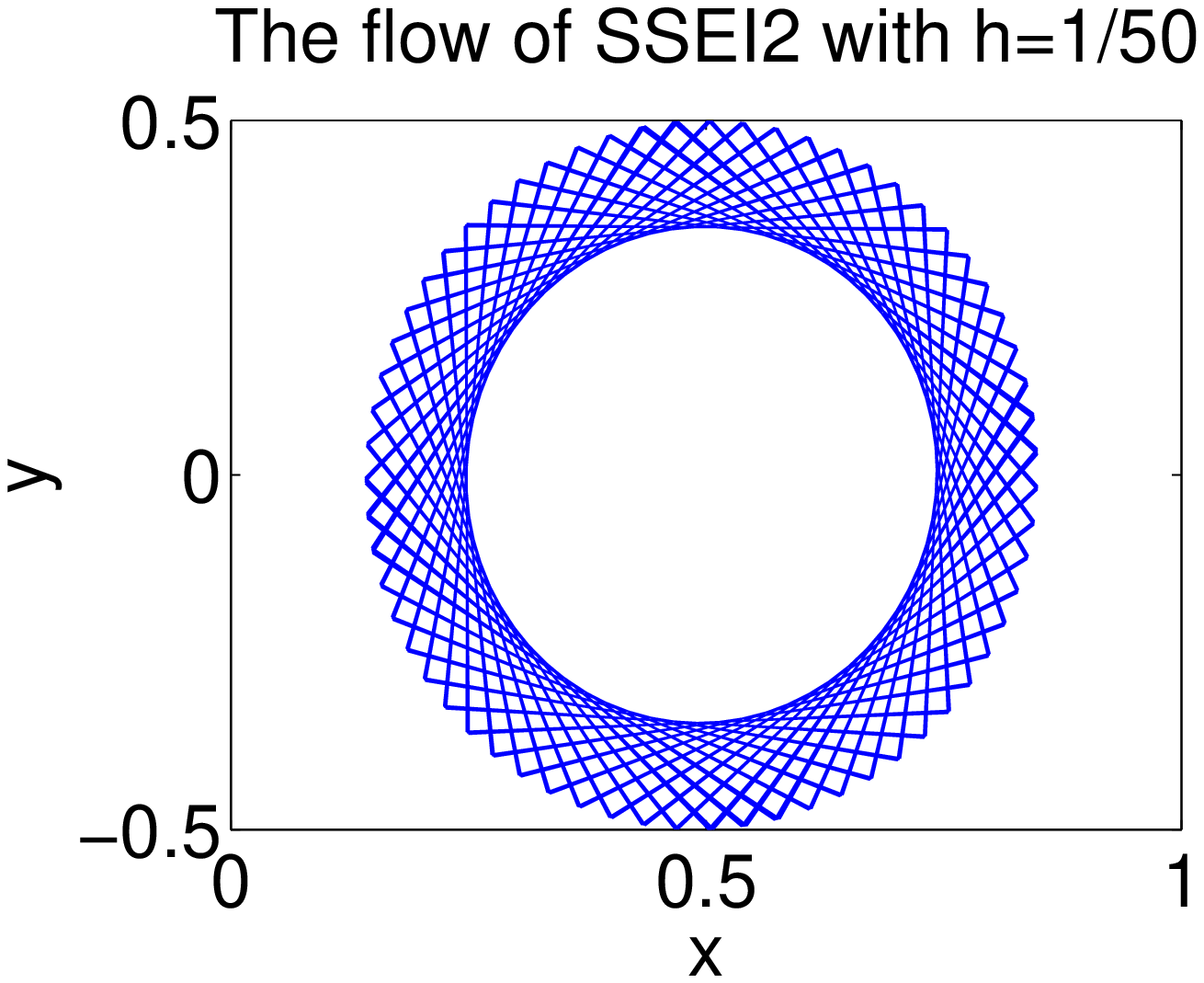}
\includegraphics[width=3.0cm,height=3.0cm]{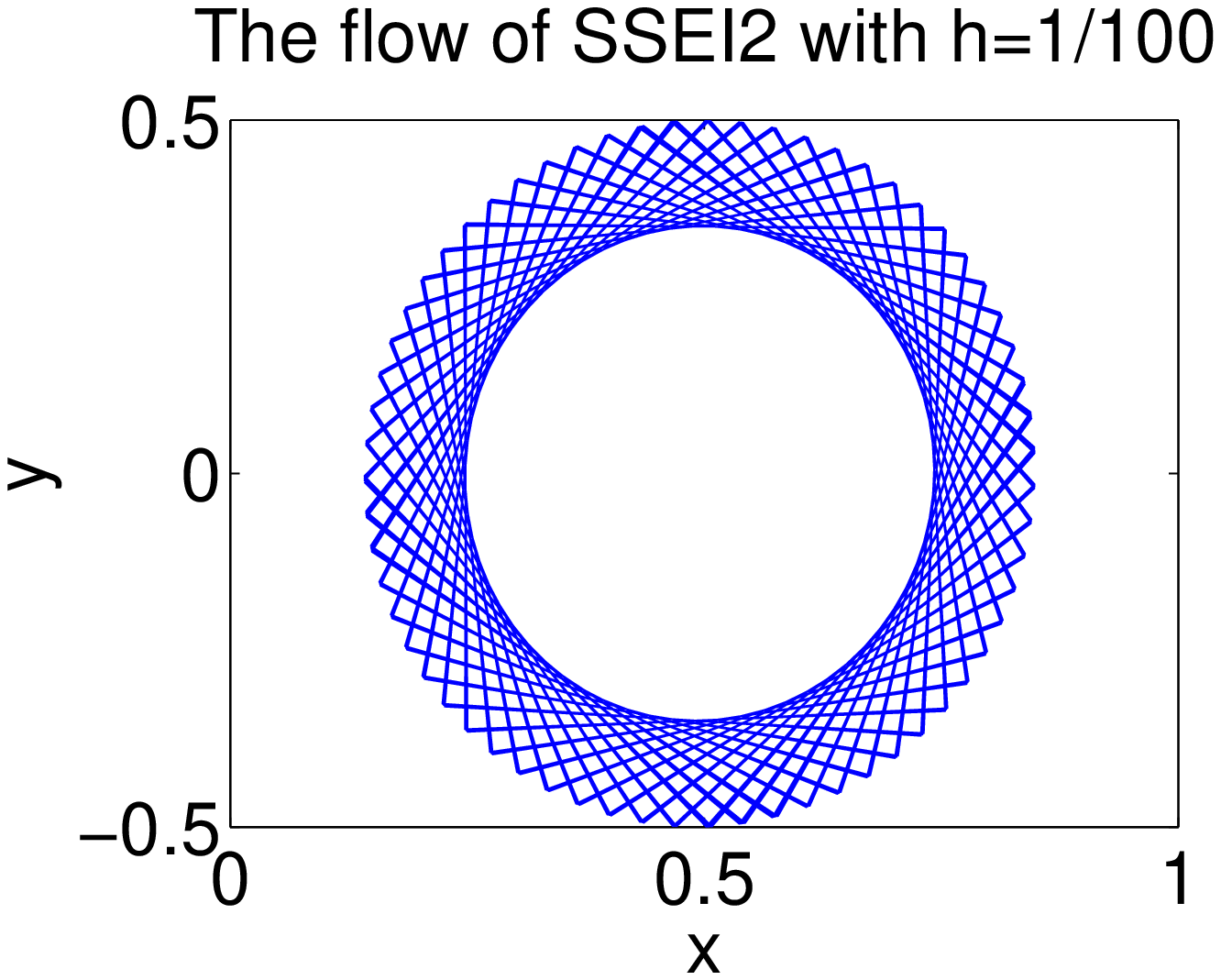}
\includegraphics[width=3.0cm,height=3.0cm]{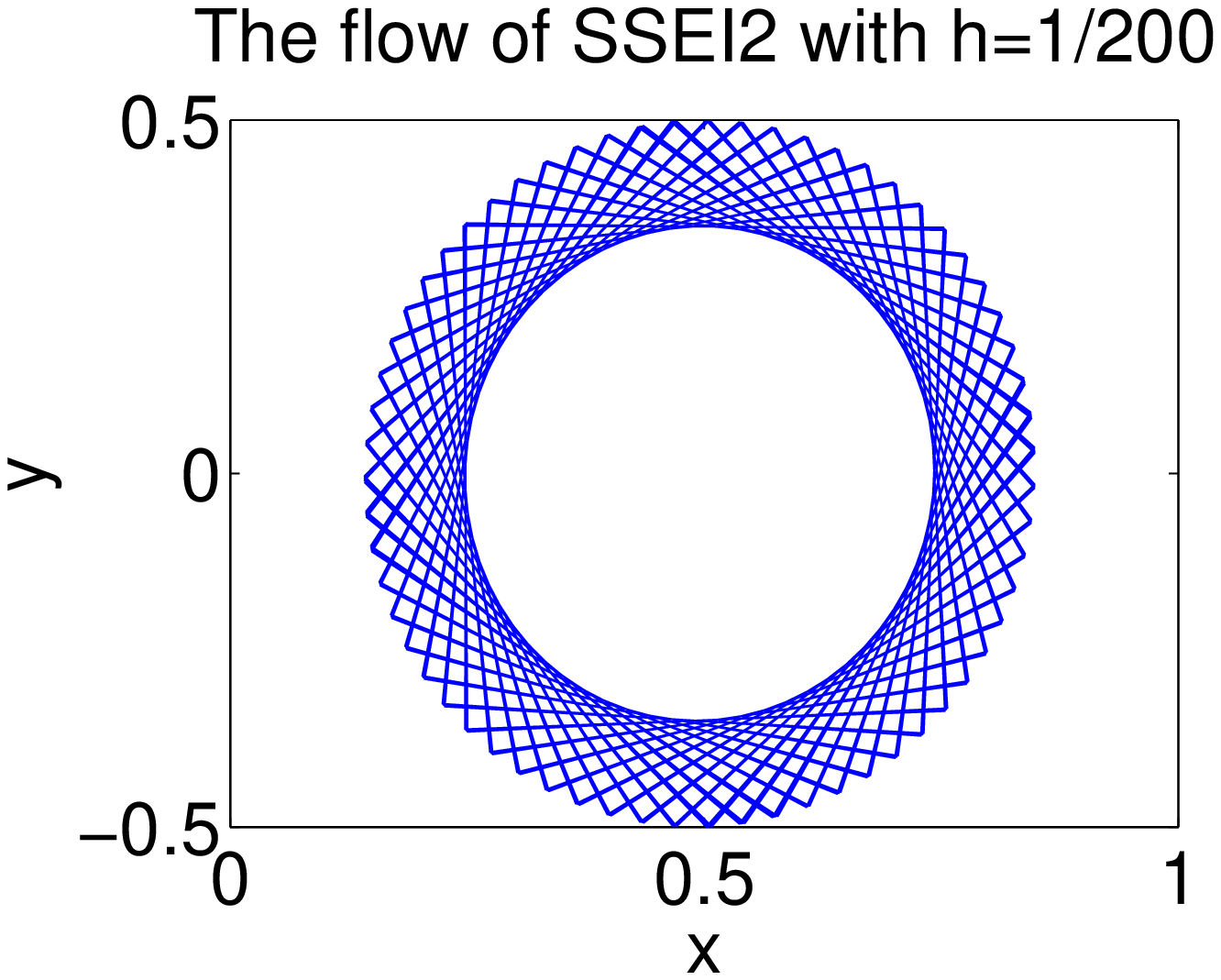}
\includegraphics[width=3.0cm,height=3.0cm]{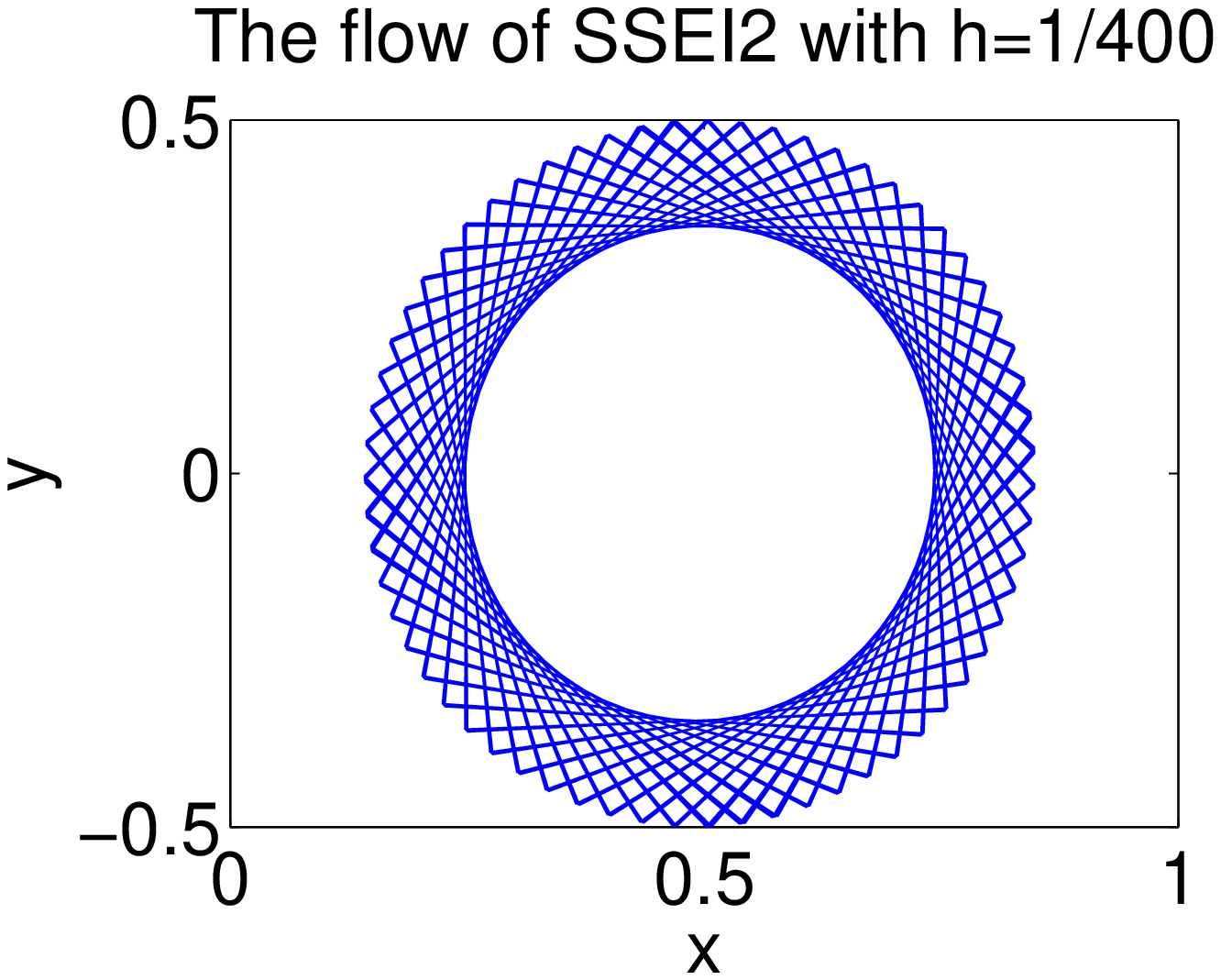}\\
\includegraphics[width=3.0cm,height=3.0cm]{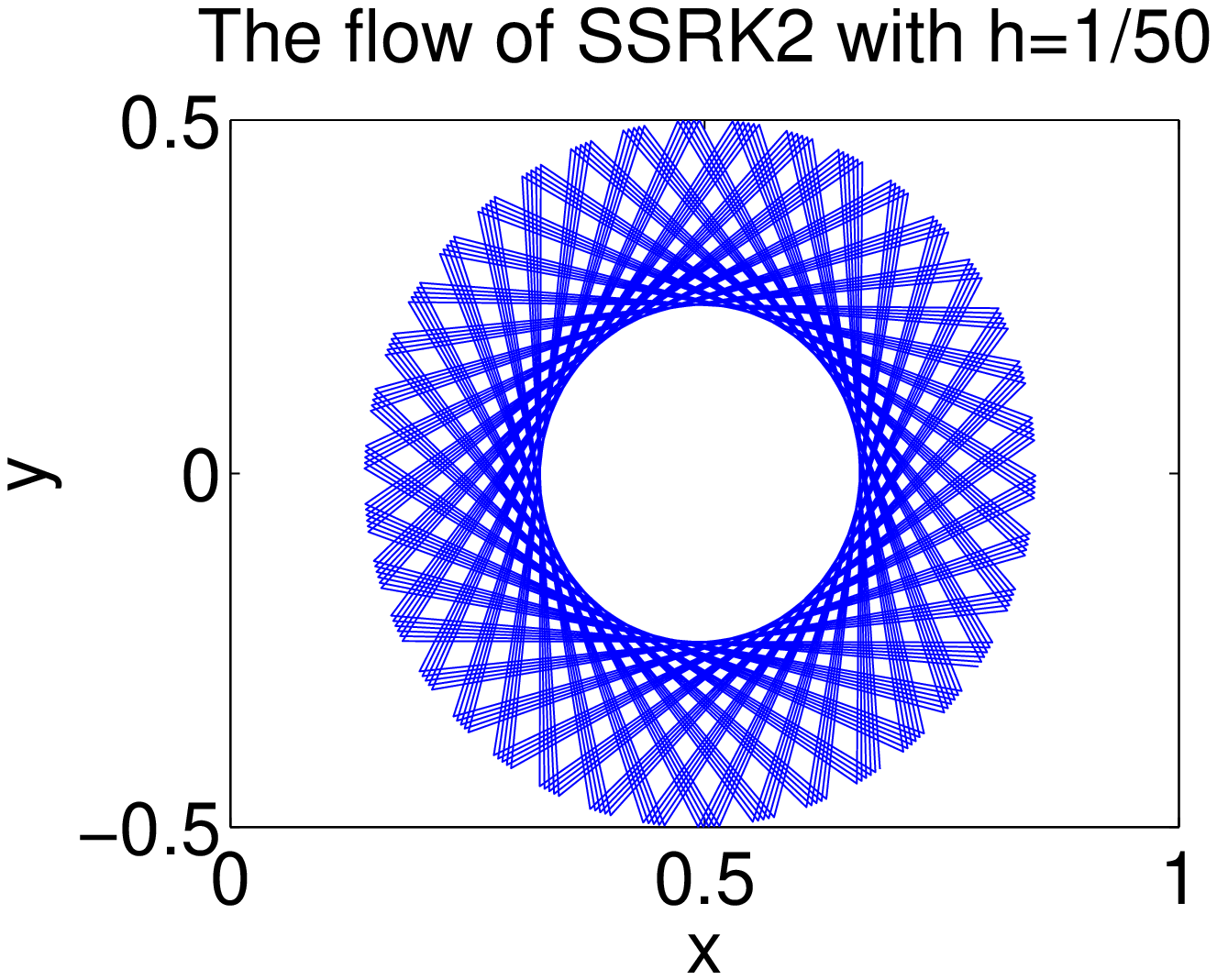}
\includegraphics[width=3.0cm,height=3.0cm]{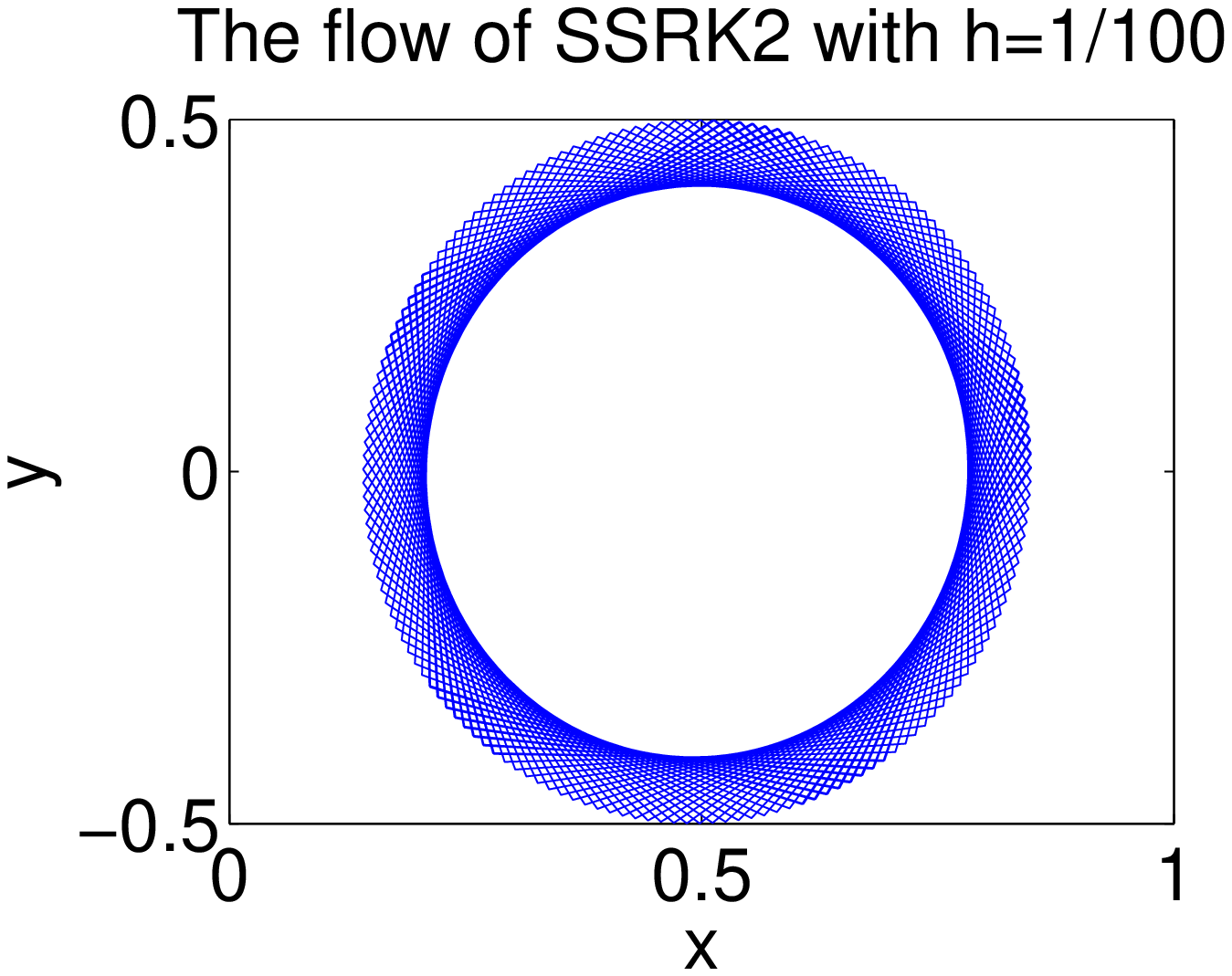}
\includegraphics[width=3.0cm,height=3.0cm]{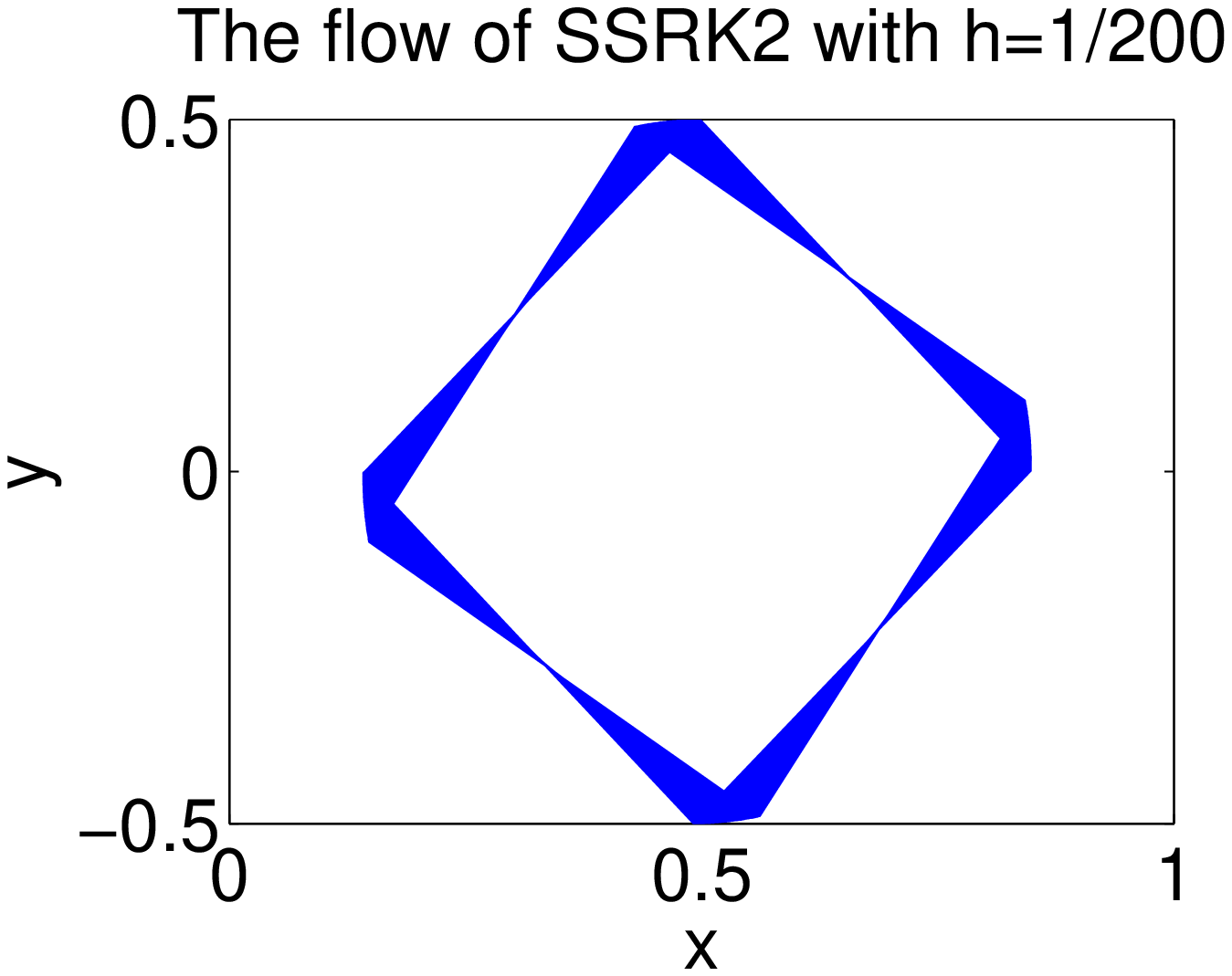}
\includegraphics[width=3.0cm,height=3.0cm]{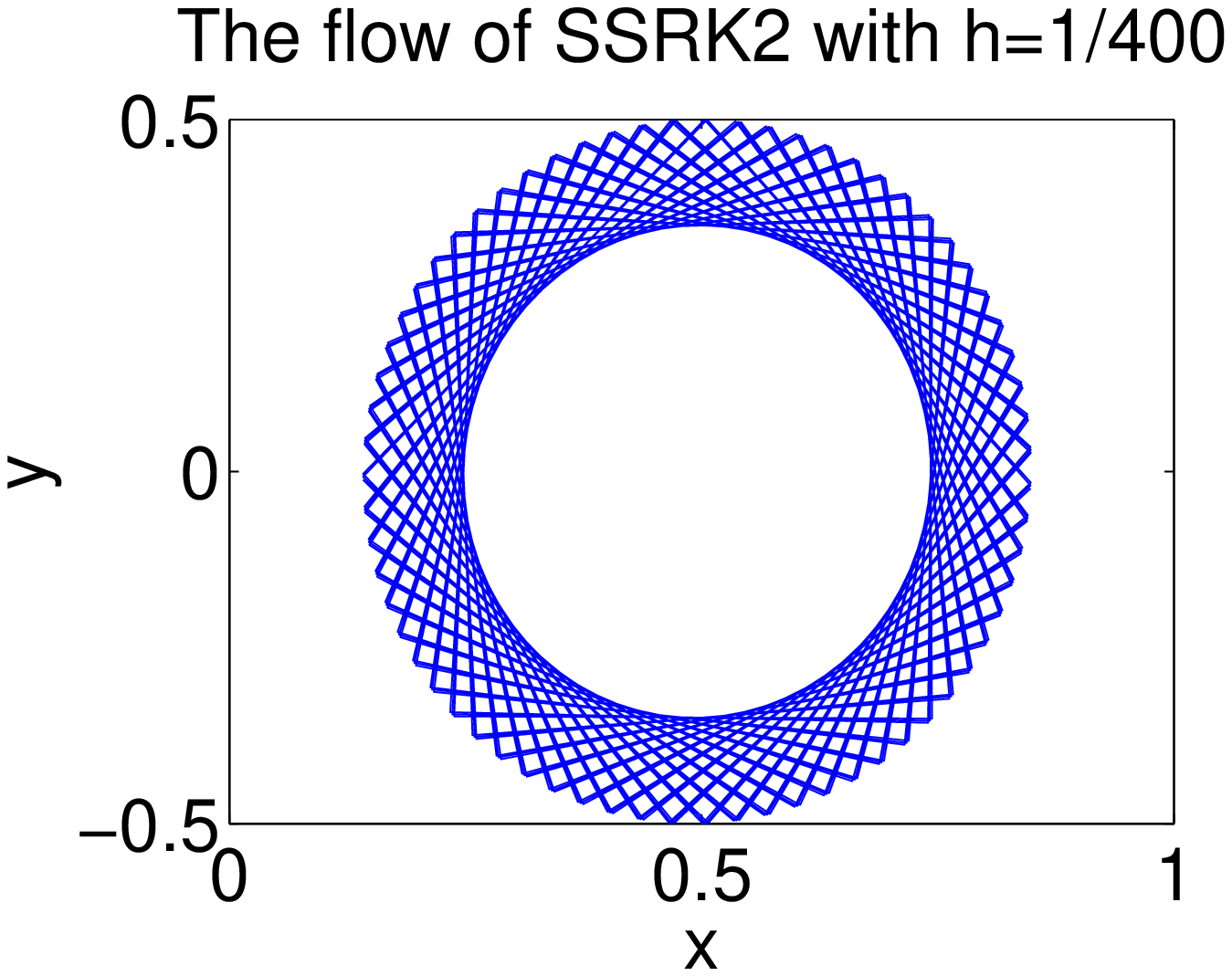}
\caption{Problem 2: the flows of different methods.} \label{p2-1}
\end{figure}
 \begin{figure}[ptb]
\centering
\includegraphics[width=3.8cm,height=4cm]{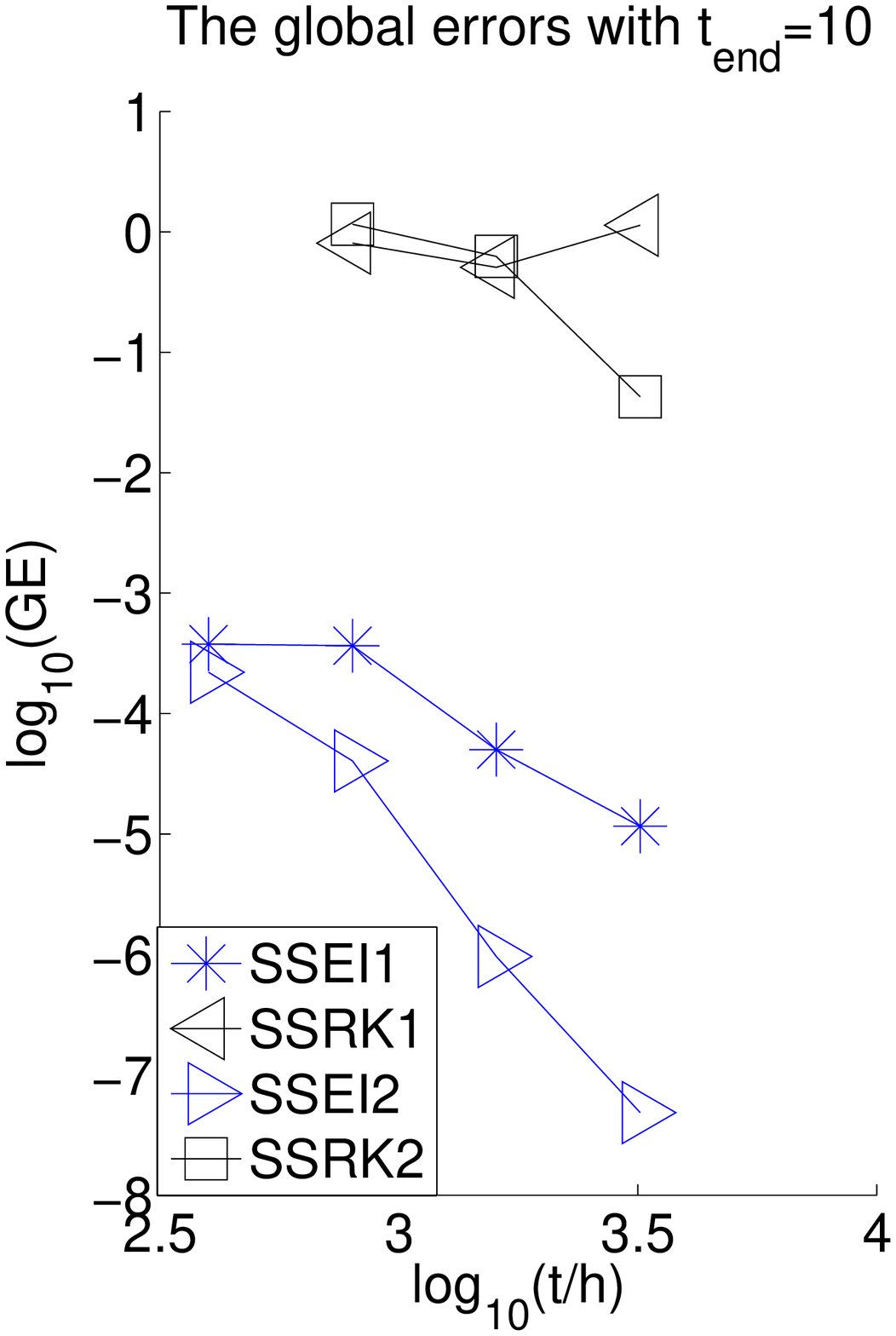}
\includegraphics[width=3.8cm,height=4cm]{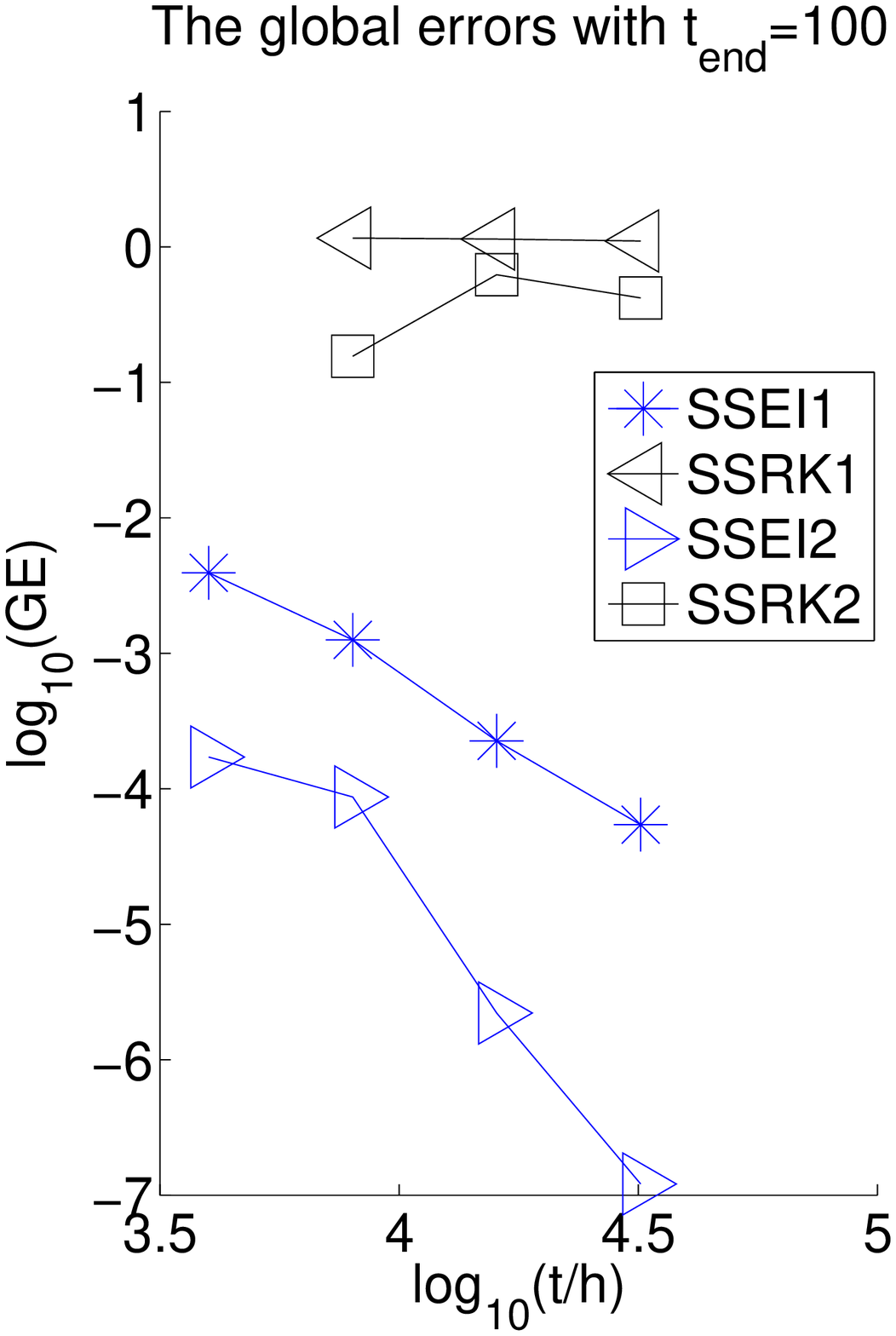}
\includegraphics[width=3.8cm,height=4cm]{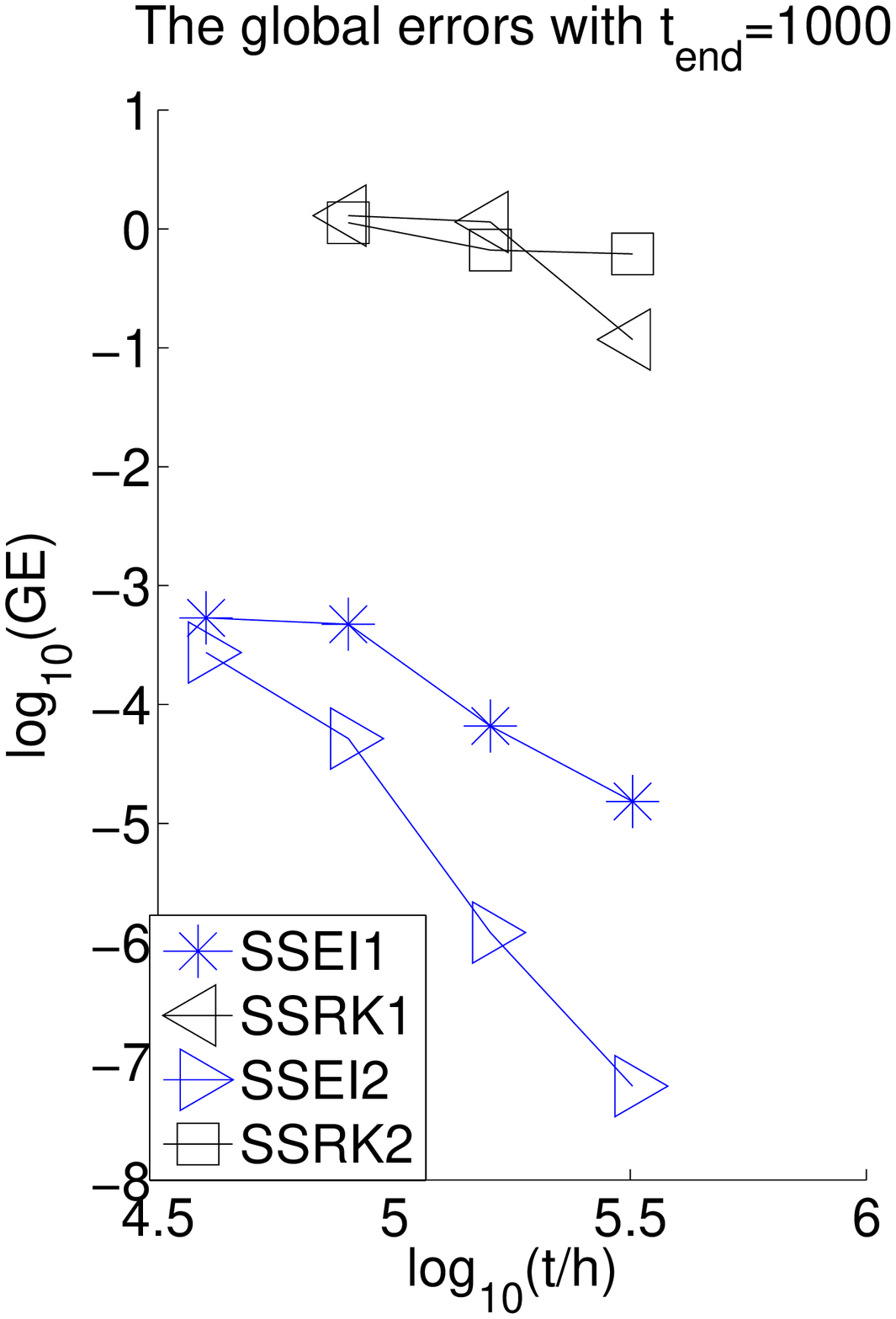}
\caption{Problem 2: the relative global errors.} \label{p2-2}
\end{figure}

 \vskip2mm\noindent\textbf{Problem 3.} Consider the damped Helmholtz-Duffing
oscillator (see \cite{AML-2012})
$$q''+2\upsilon q'+Aq=-Bq^2-\varepsilon q^3,$$ where
$q$ denotes the displacement of the system, $A$ is the natural
frequency, $\varepsilon$ is a non-linear system parameter,
$\upsilon$ is the damping factor, and $B$ is a system parameter
independent of time. It is  well known  that the dynamical behavior
of
 eardrum oscillations, elasto-magnetic suspensions, thin laminated plates, graded beams,
and other physical phenomena all  fall into this kind of equations.
We choose the parameters
$$\upsilon=0.01,\ A=200,\ B=-0.5,\ \varepsilon=1$$
and the initial value $q(0)=1$ and $q'(0)=15.199$. This problem is
firstly integrated  on  $[0,200]$ with $h=1/2,1/10,1/50,1/200$.
  We present the numerical flows $q$ and $p=q'$   at the  time points
$\{\frac{1}{2}i\}_{i=1,\ldots,400}$ in  Figure \ref{p3-1}.  We then
solve the problem with different $t_{\textmd{end}}=10,100, 1000$ and
$h= 0.1/2^{i}$ for $i=0,\ldots,3$.  The relative global errors are
shown in Figure \ref{p3-2}.  It follows again from the results  that
SSEI methods perform  much better  than SSRK methods.
 \begin{figure}[ptb]
\centering
\includegraphics[width=3.0cm,height=3.0cm]{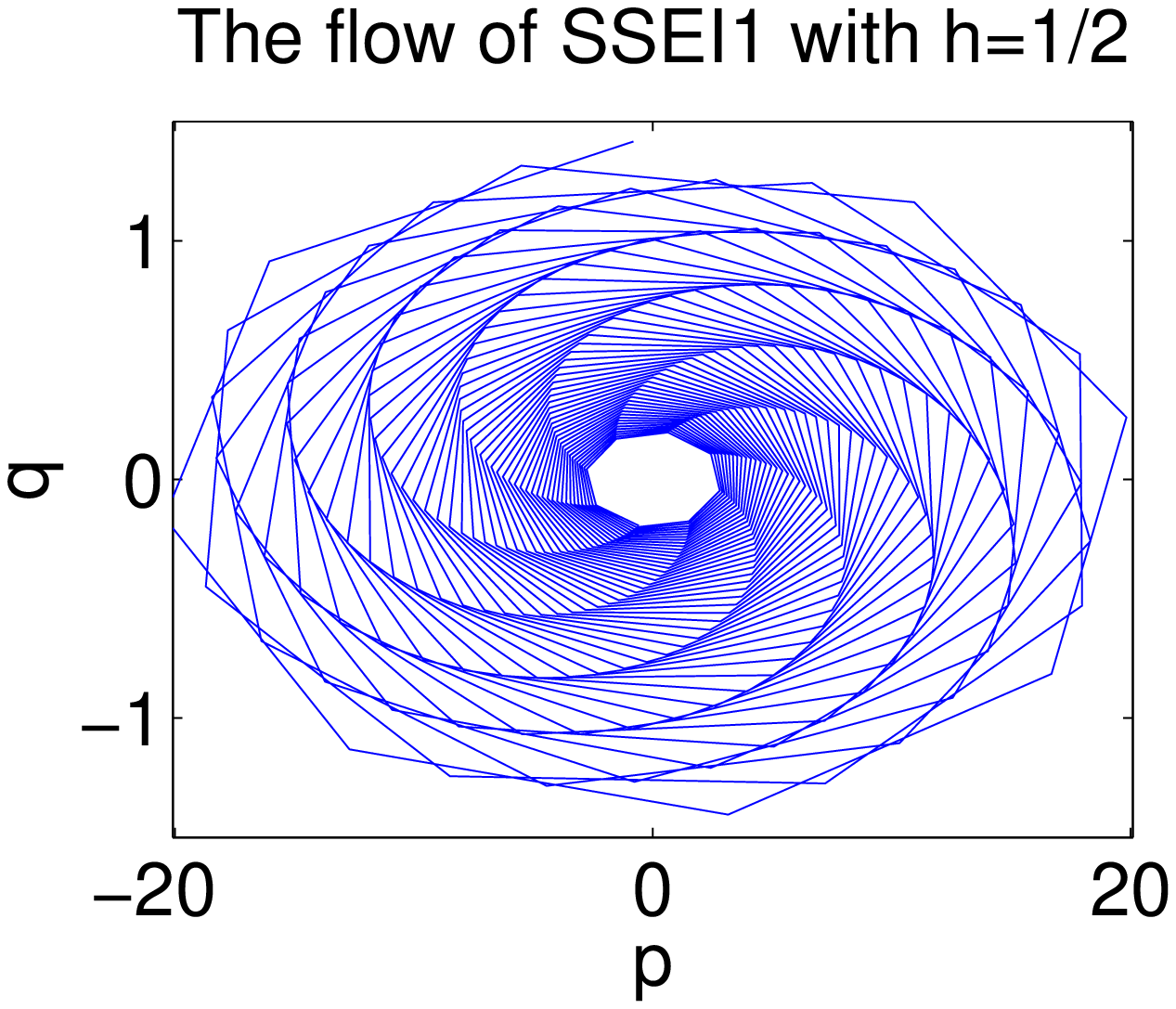}
\includegraphics[width=3.0cm,height=3.0cm]{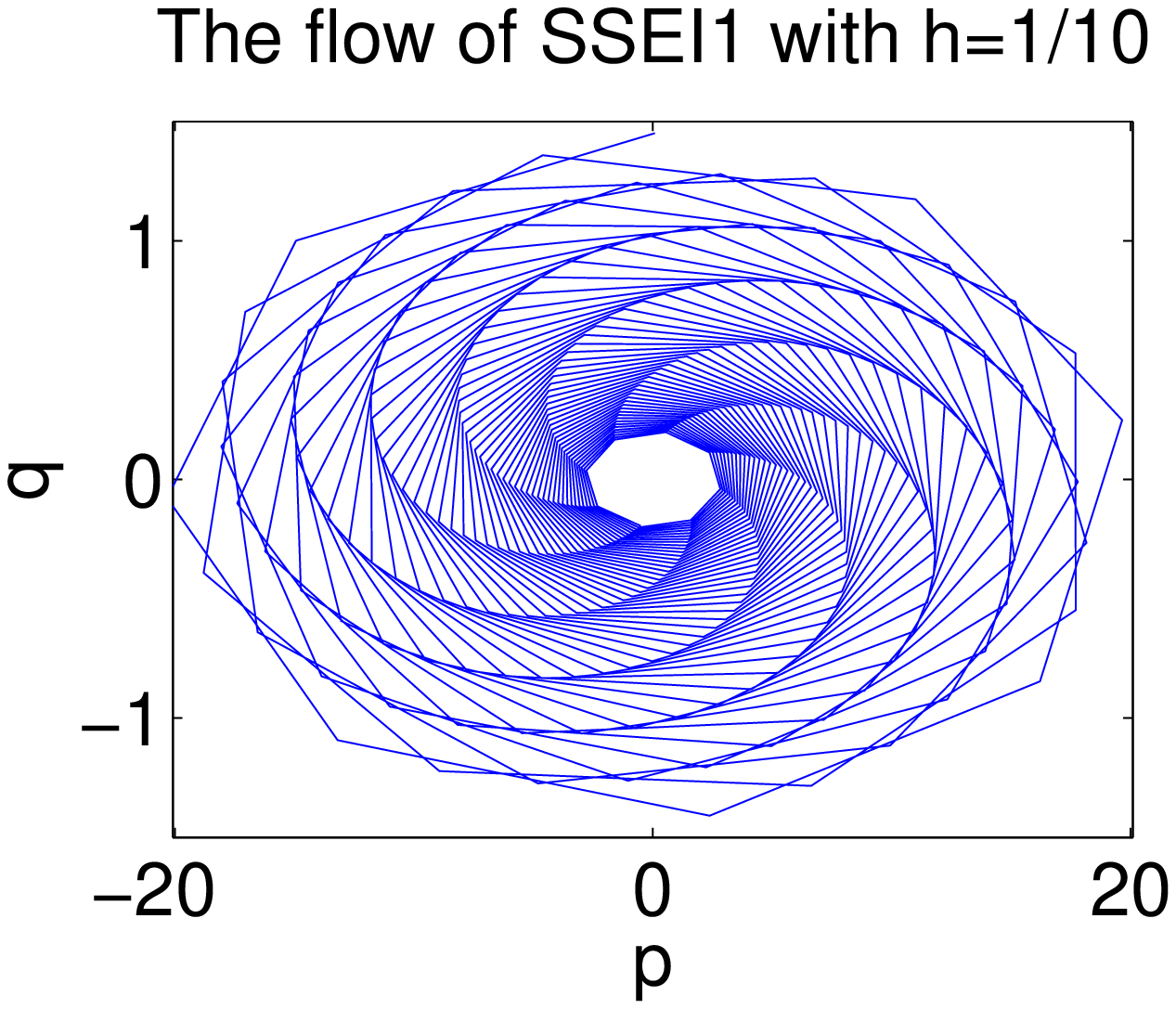}
\includegraphics[width=3.0cm,height=3.0cm]{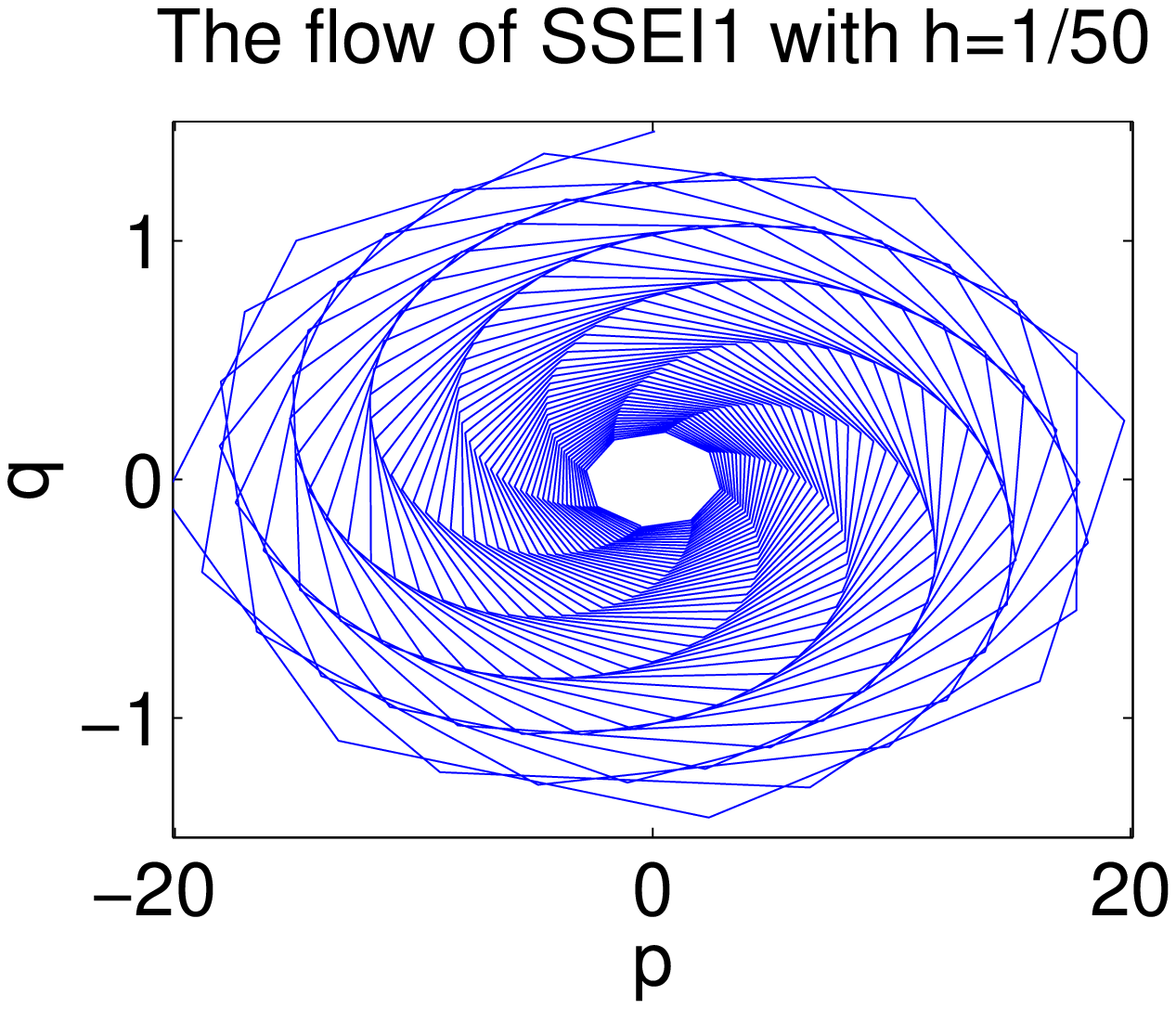}
\includegraphics[width=3.0cm,height=3.0cm]{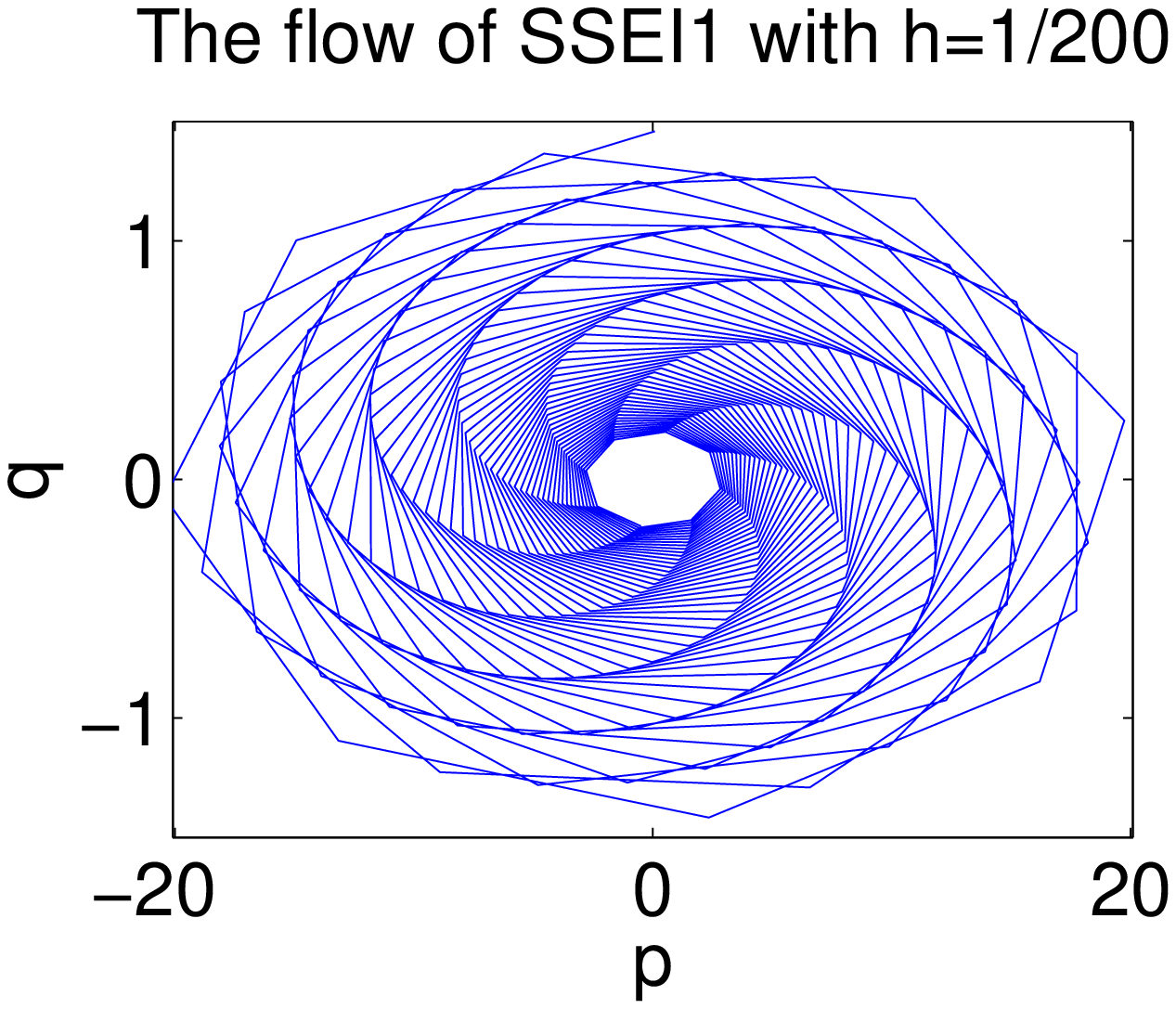}\\
\includegraphics[width=3.0cm,height=3.0cm]{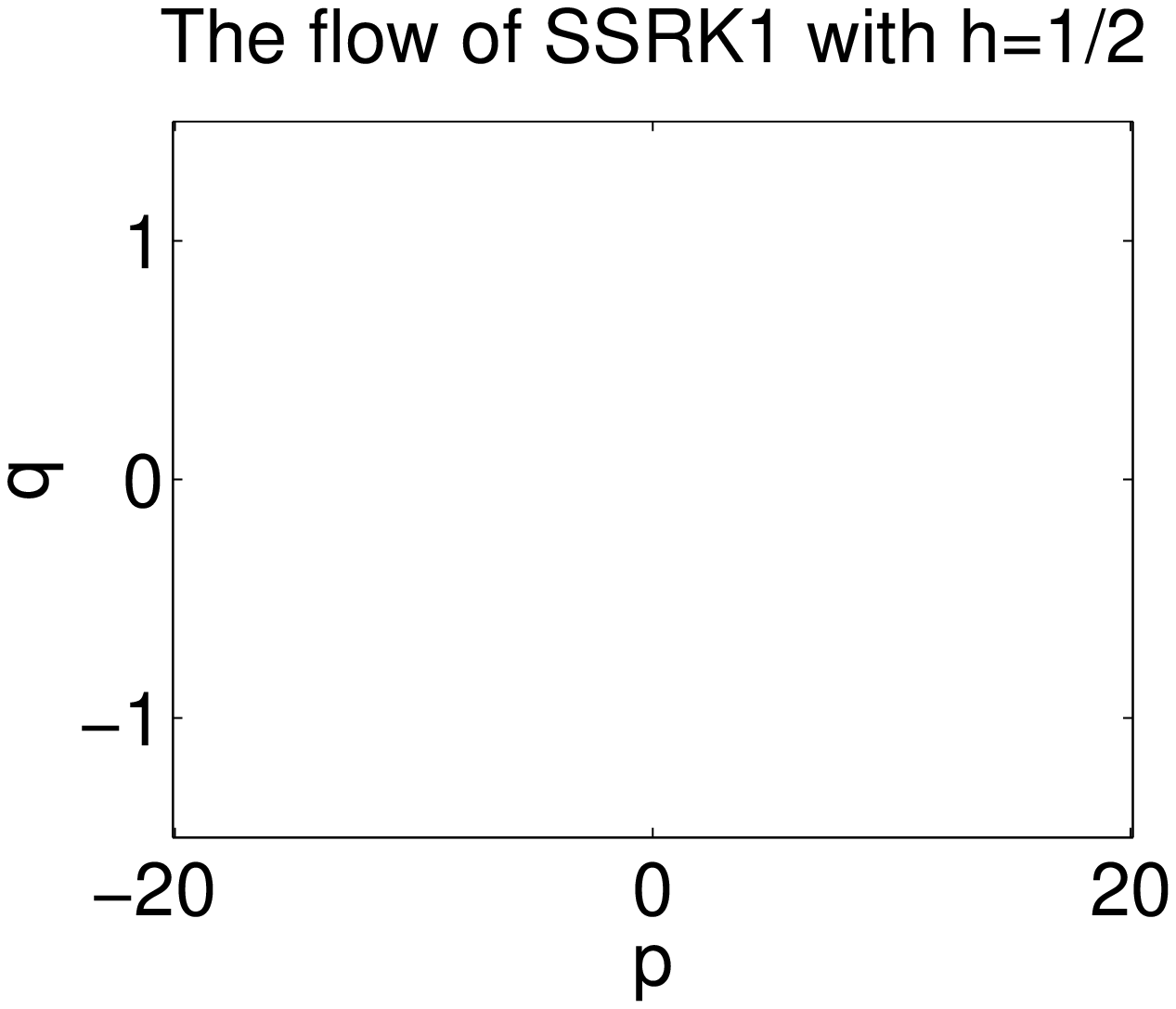}
\includegraphics[width=3.0cm,height=3.0cm]{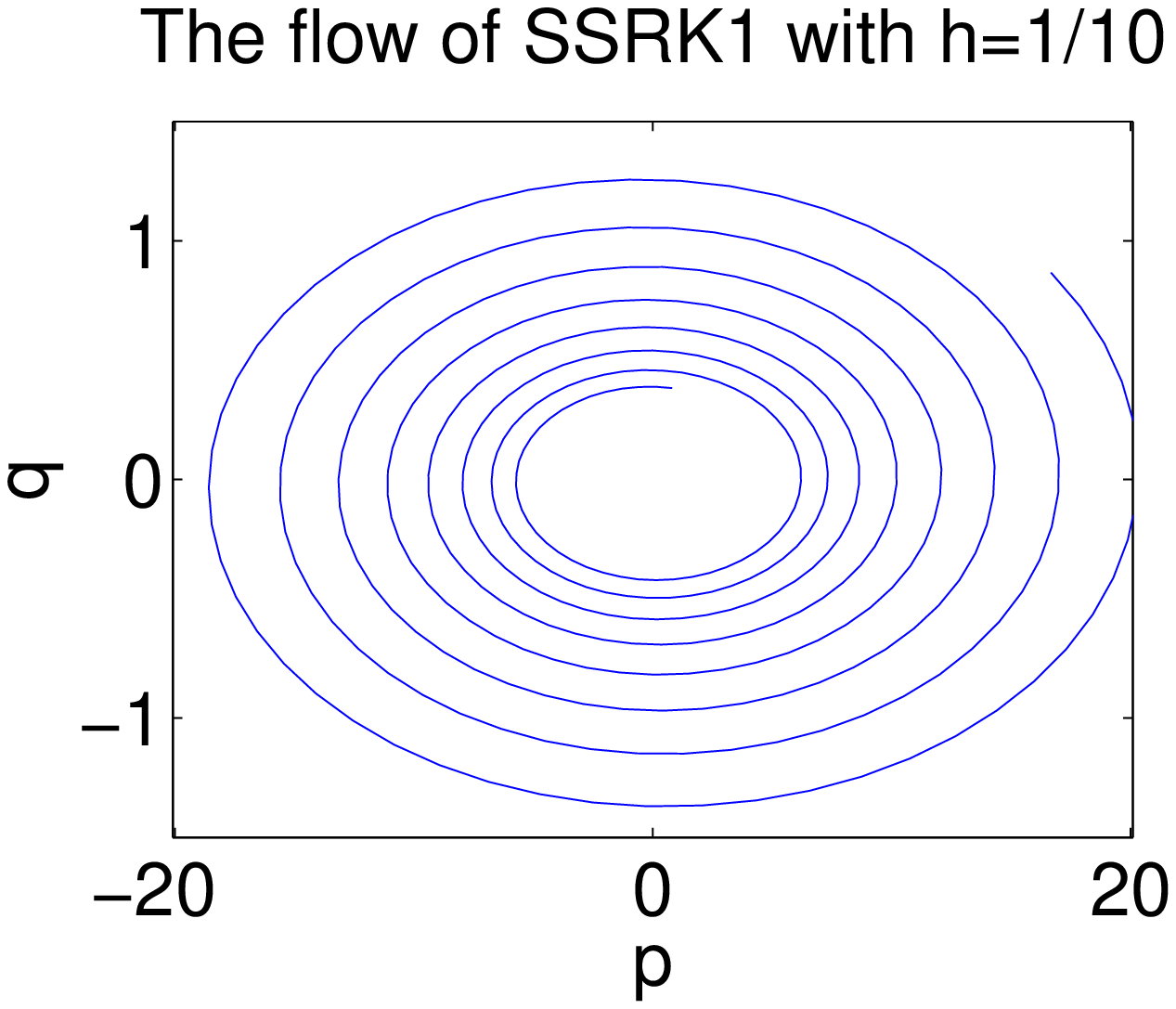}
\includegraphics[width=3.0cm,height=3.0cm]{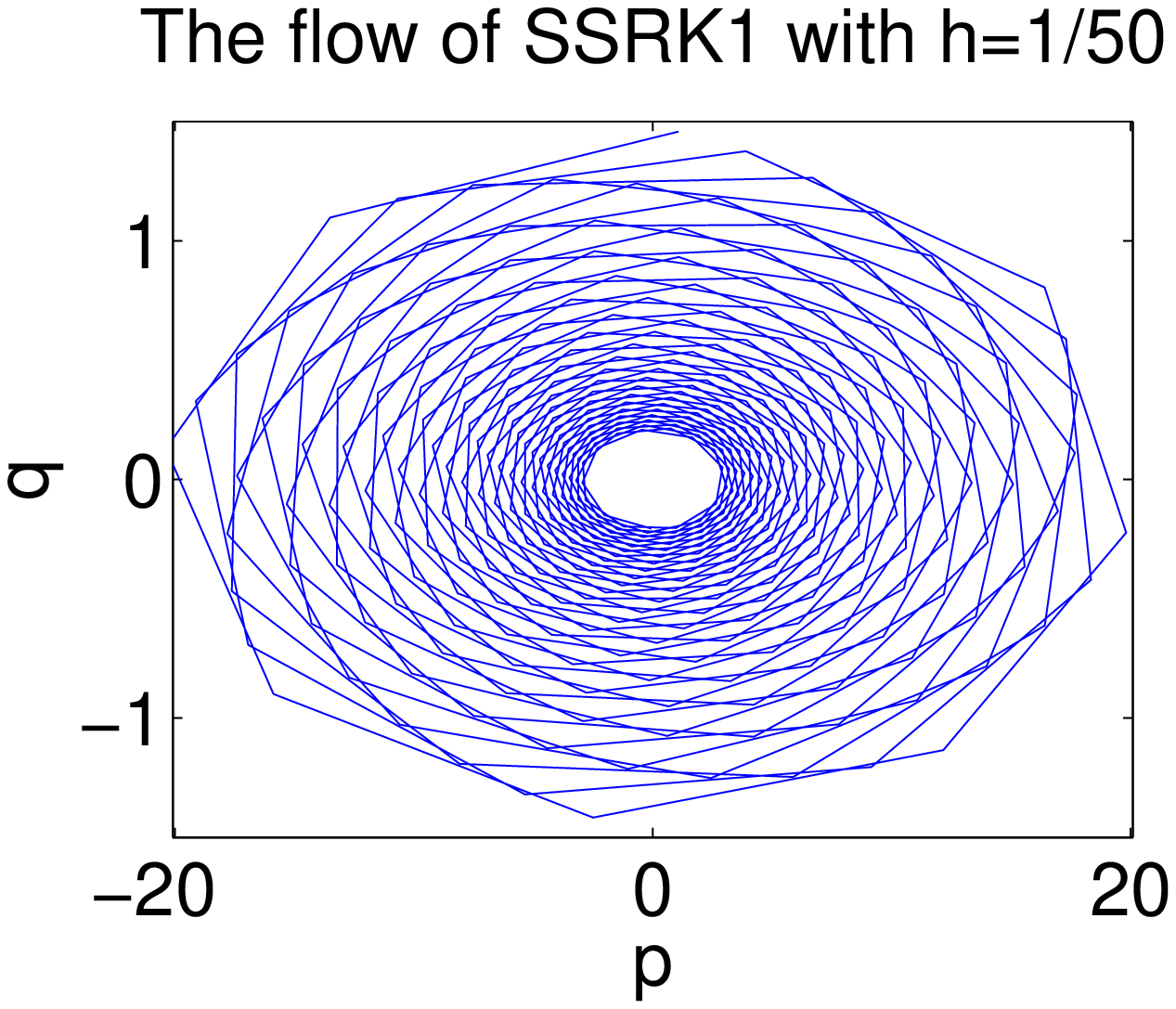}
\includegraphics[width=3.0cm,height=3.0cm]{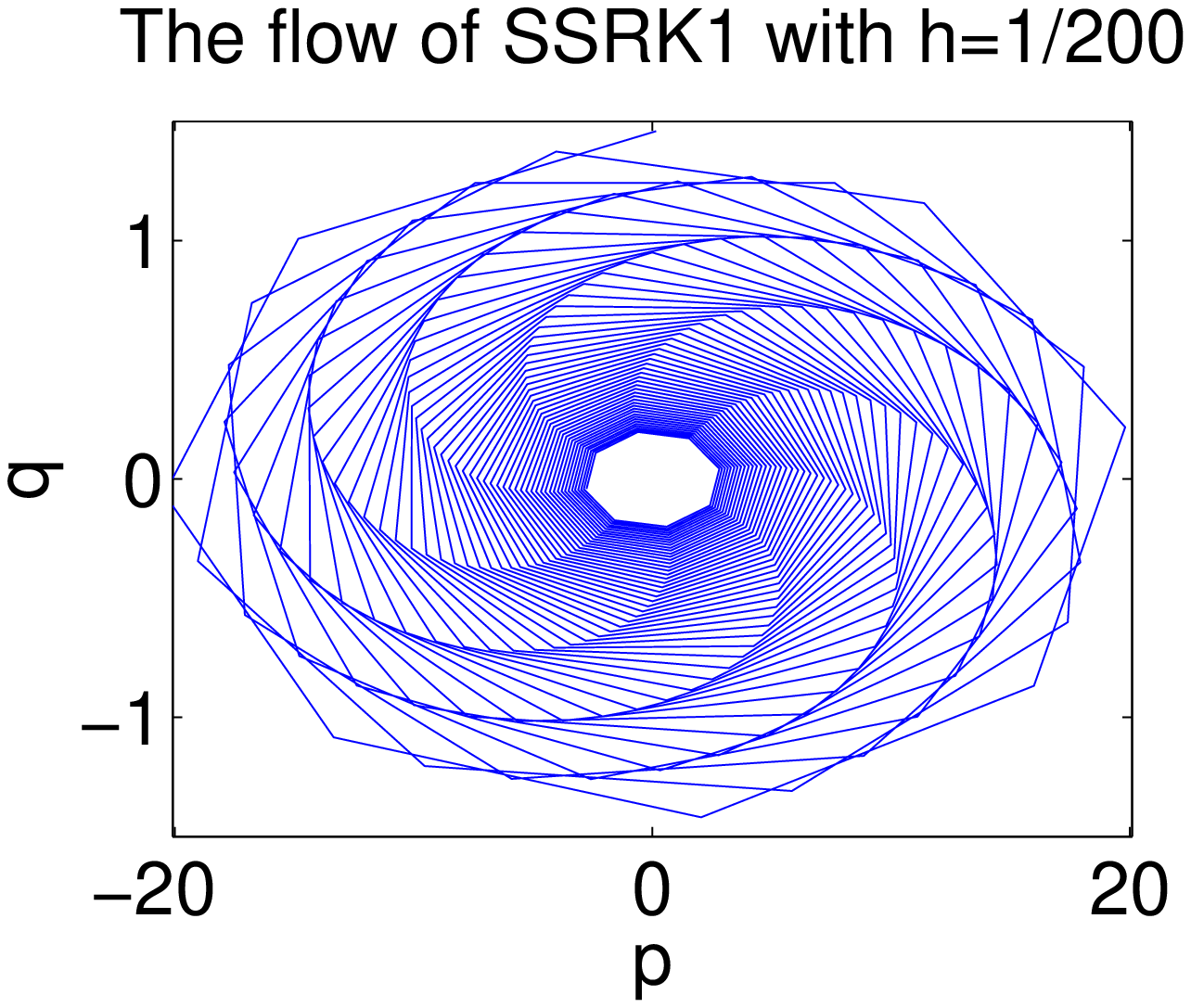}\\
\includegraphics[width=3.0cm,height=3.0cm]{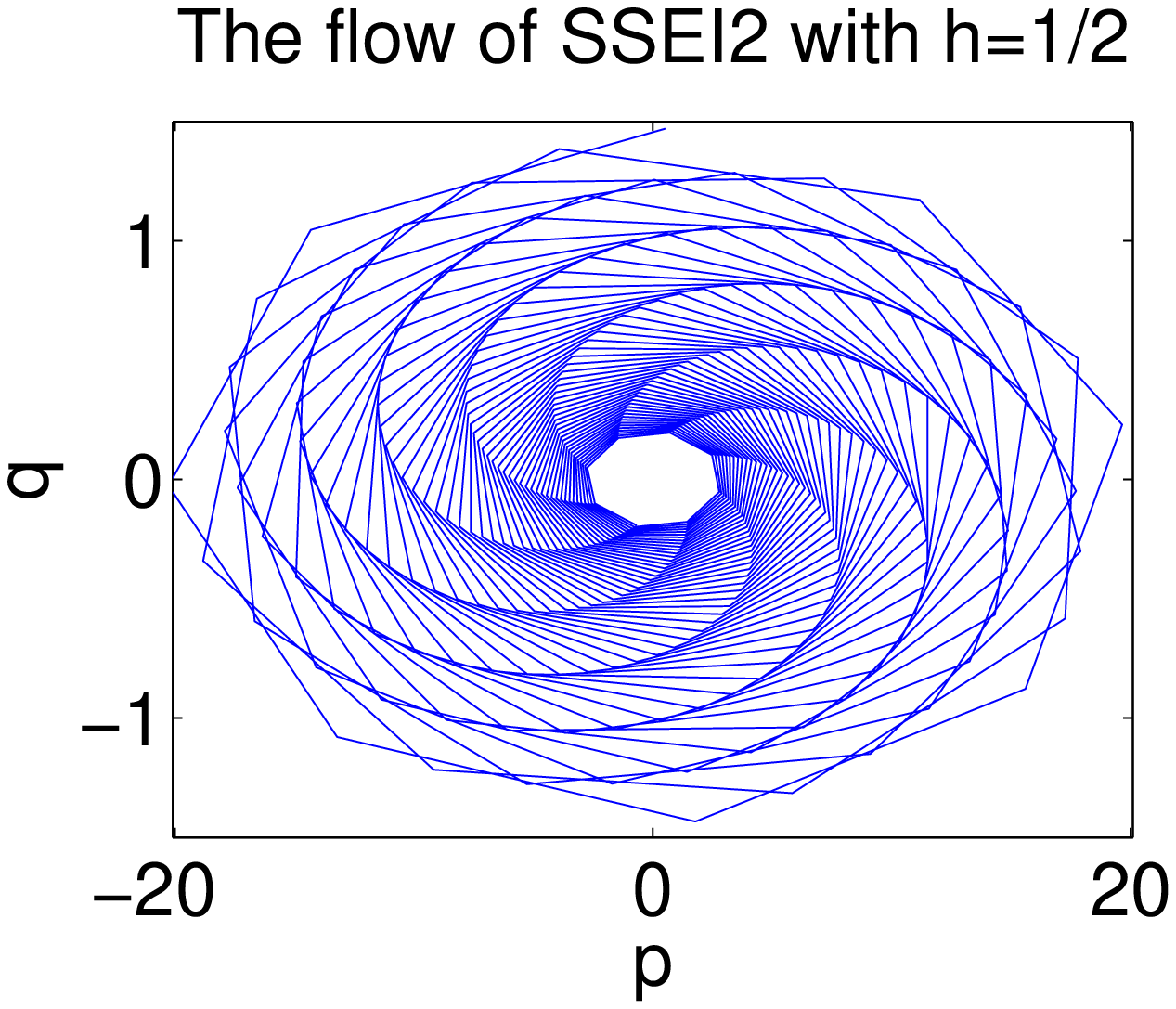}
\includegraphics[width=3.0cm,height=3.0cm]{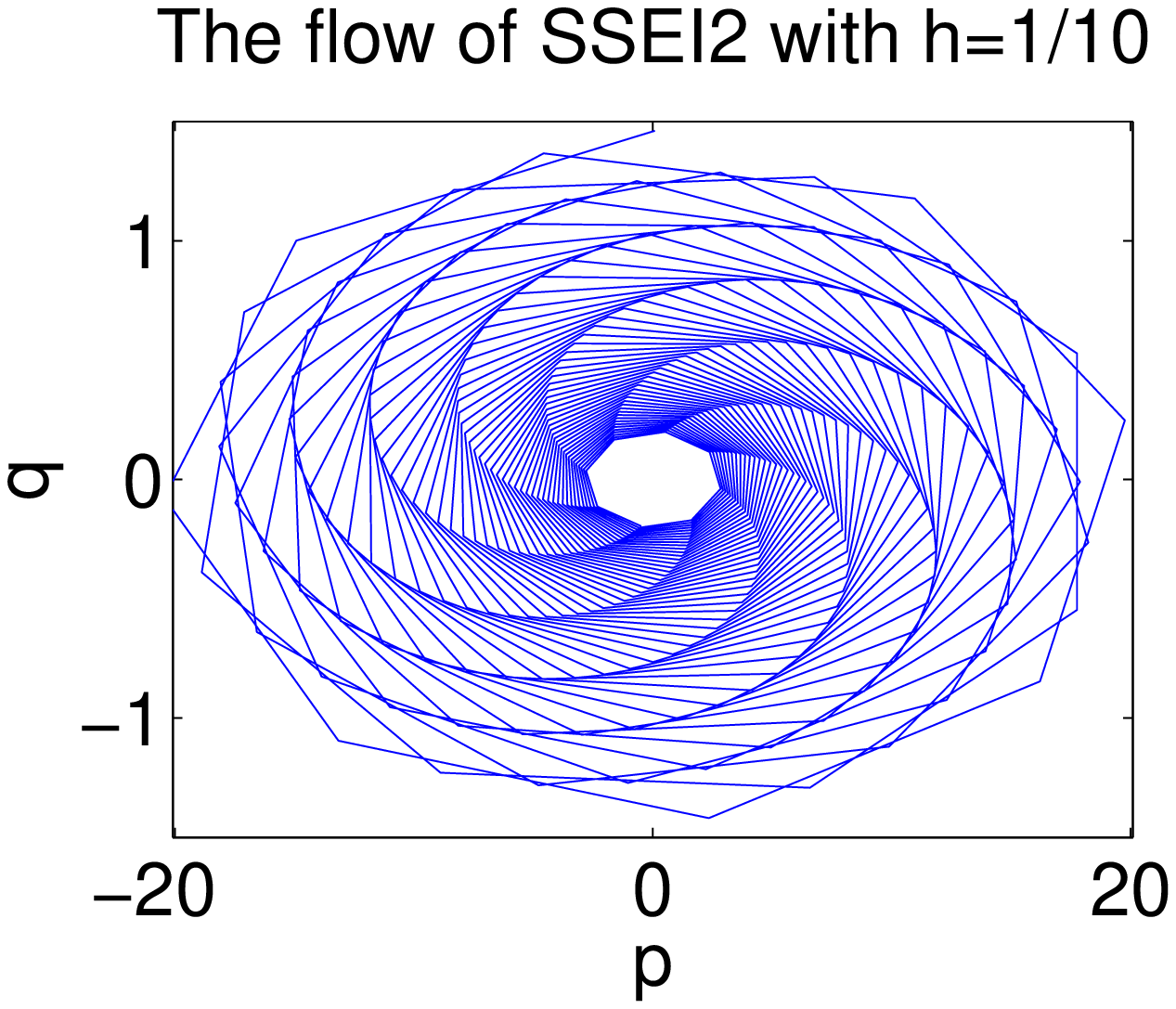}
\includegraphics[width=3.0cm,height=3.0cm]{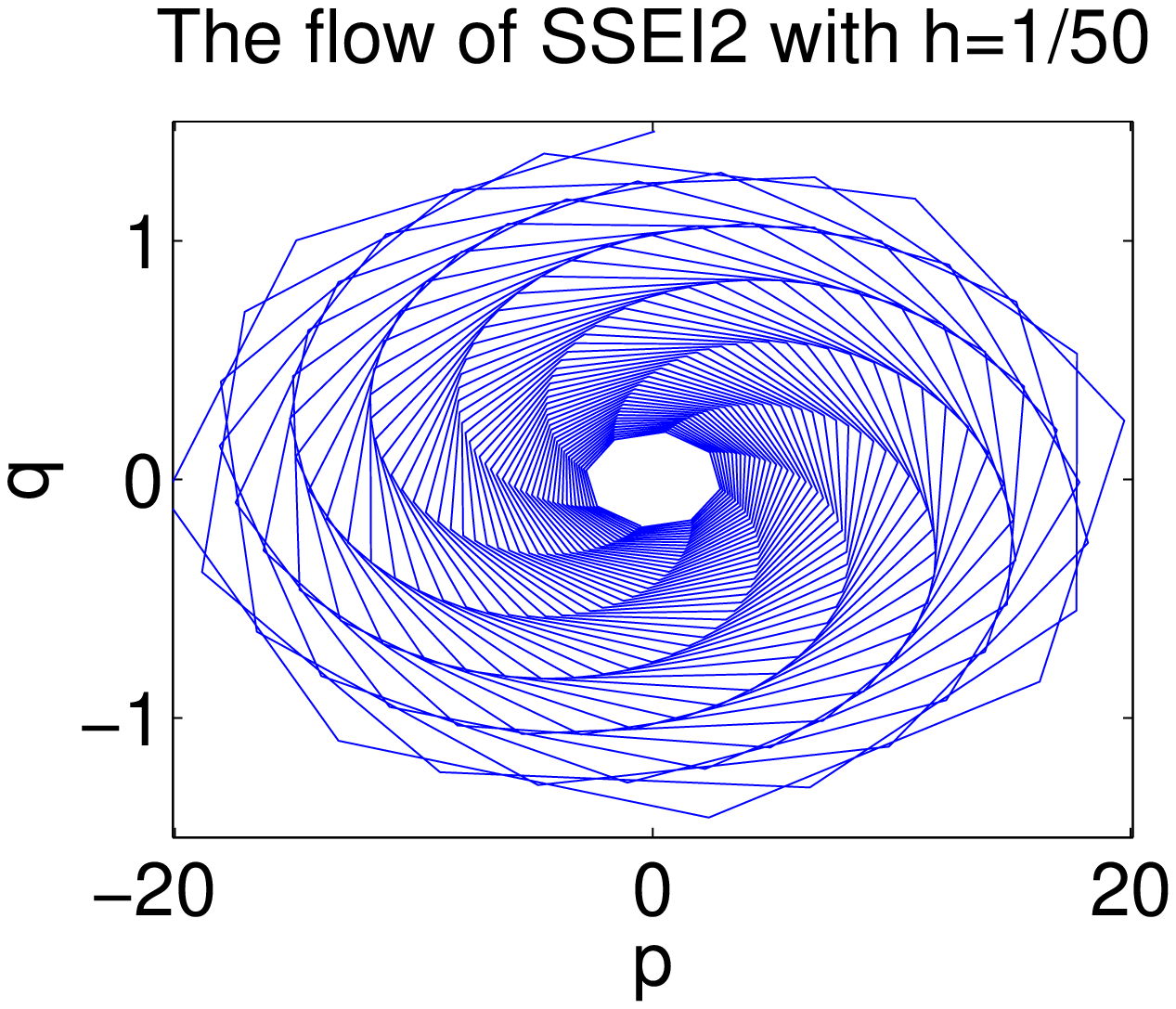}
\includegraphics[width=3.0cm,height=3.0cm]{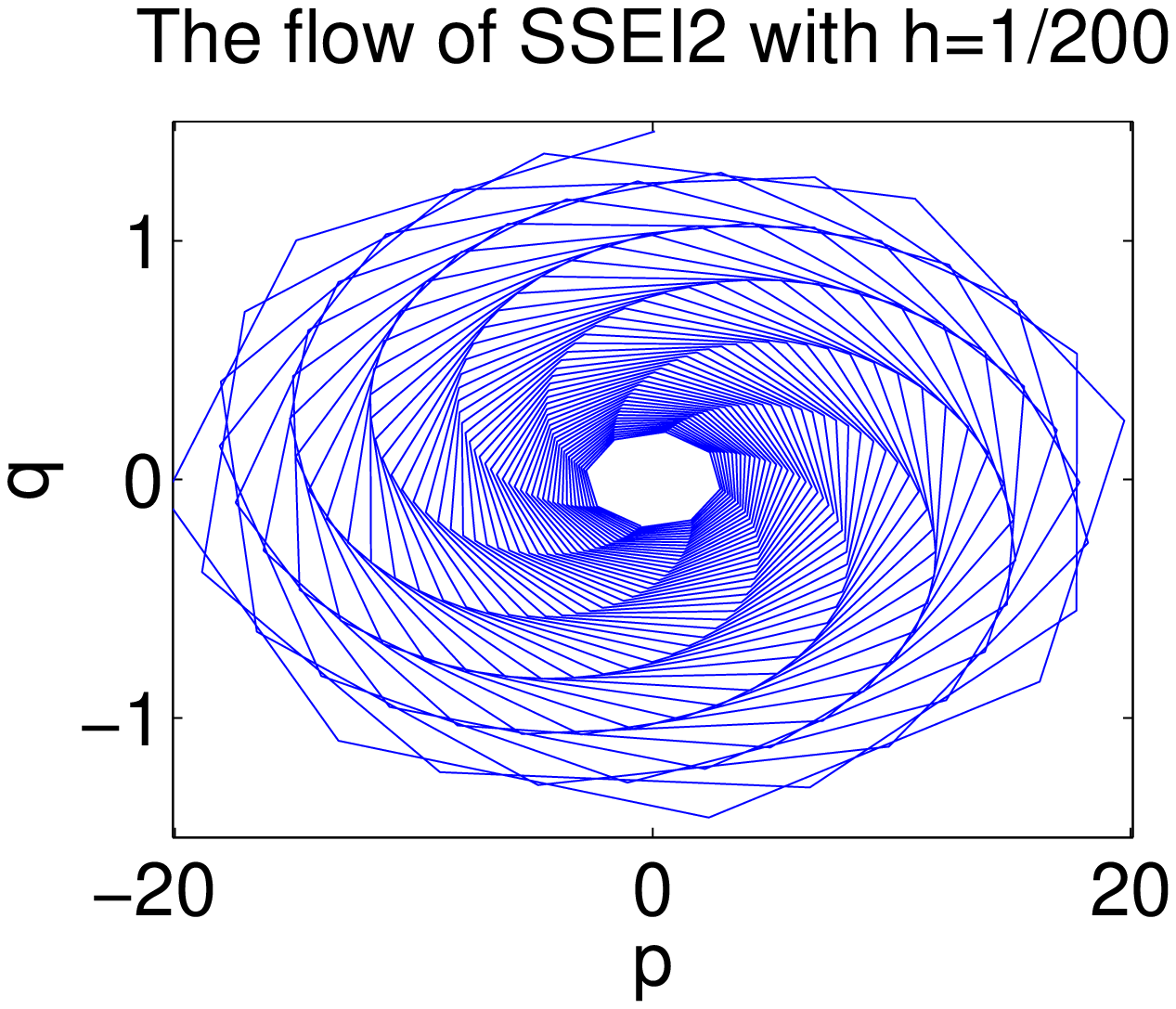}\\
\includegraphics[width=3.0cm,height=3.0cm]{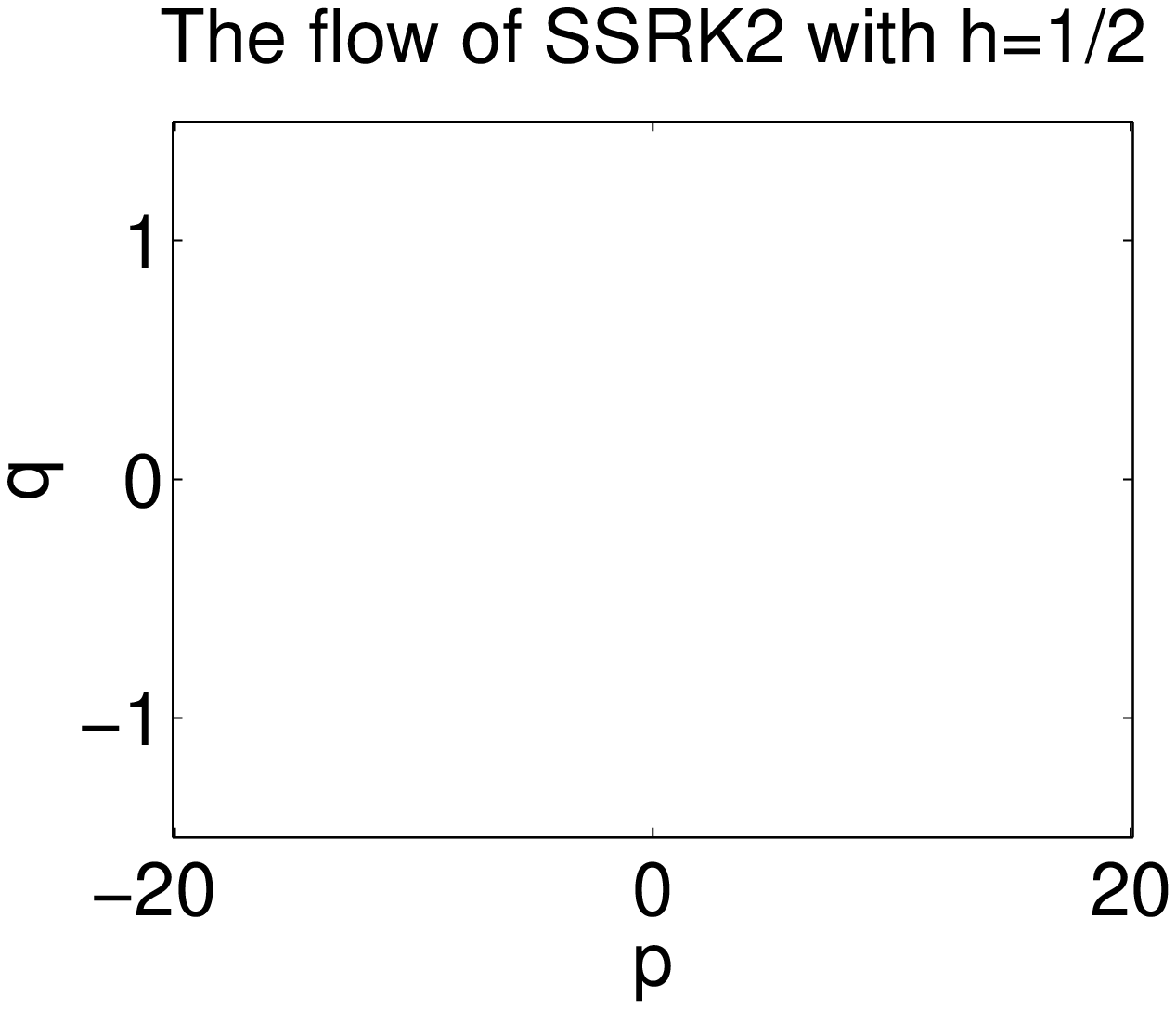}
\includegraphics[width=3.0cm,height=3.0cm]{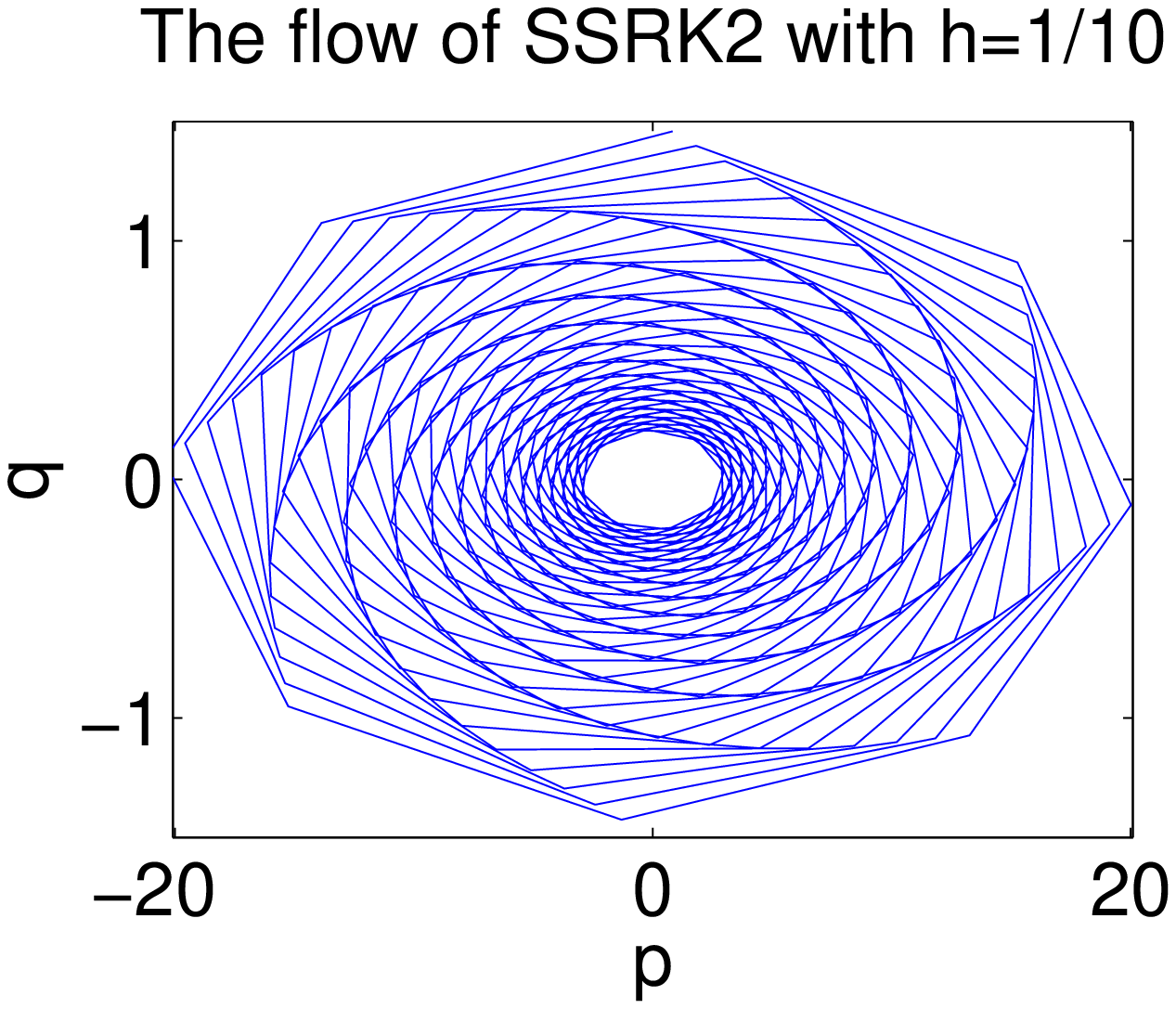}
\includegraphics[width=3.0cm,height=3.0cm]{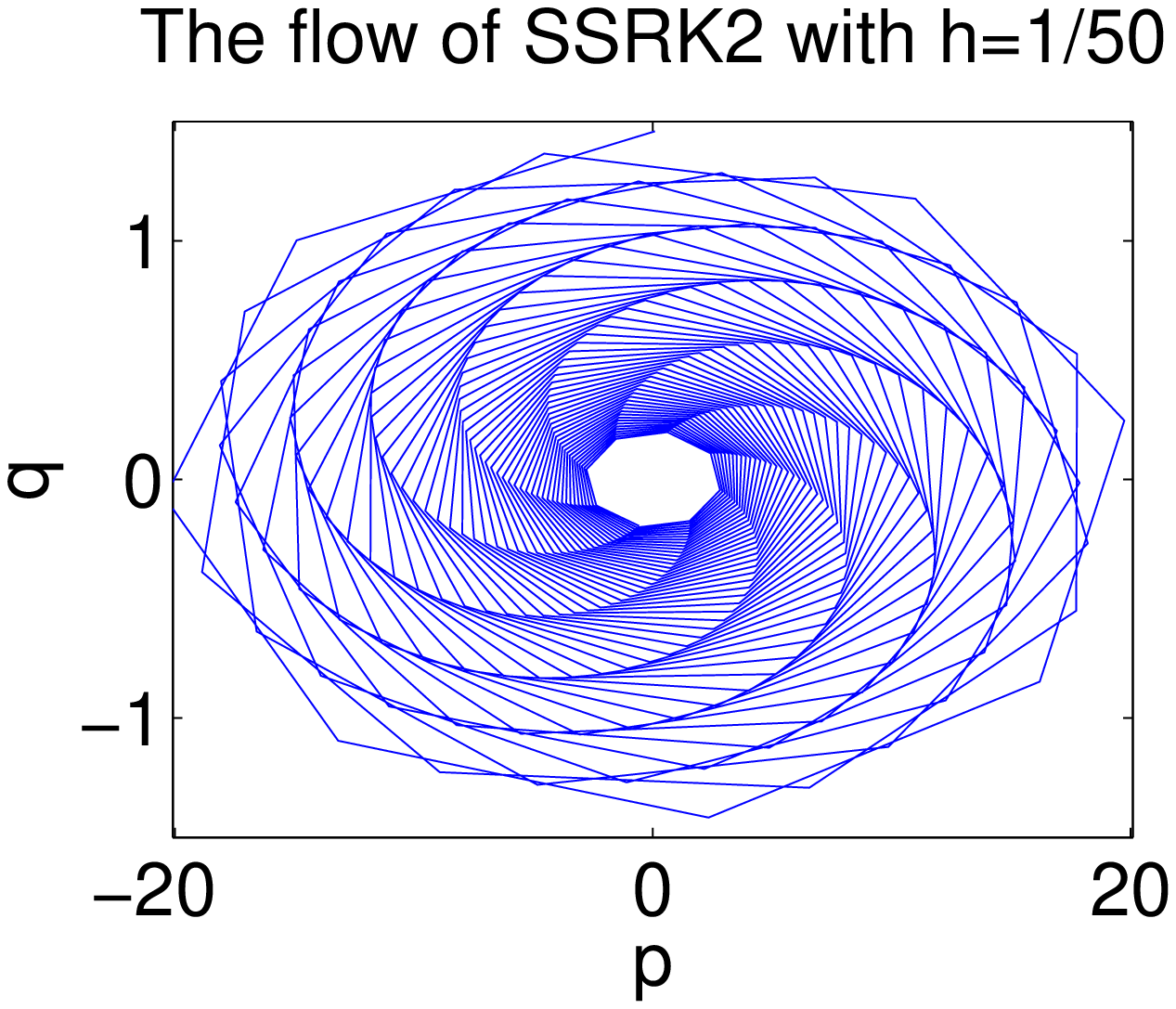}
\includegraphics[width=3.0cm,height=3.0cm]{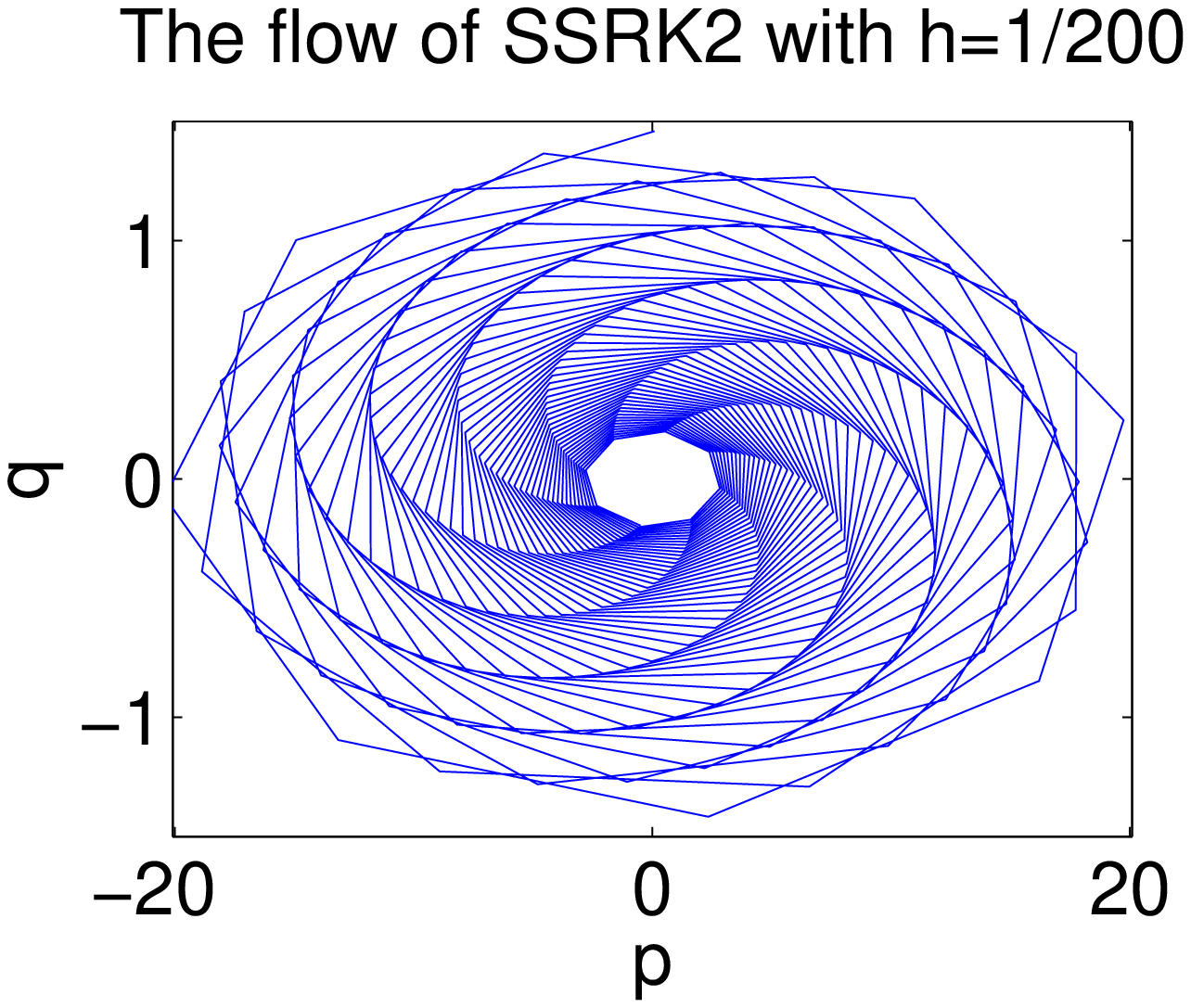}
\caption{Problem 3: the flows of different methods.} \label{p3-1}
\end{figure}
 \begin{figure}[ptb]
\centering
\includegraphics[width=3.8cm,height=4cm]{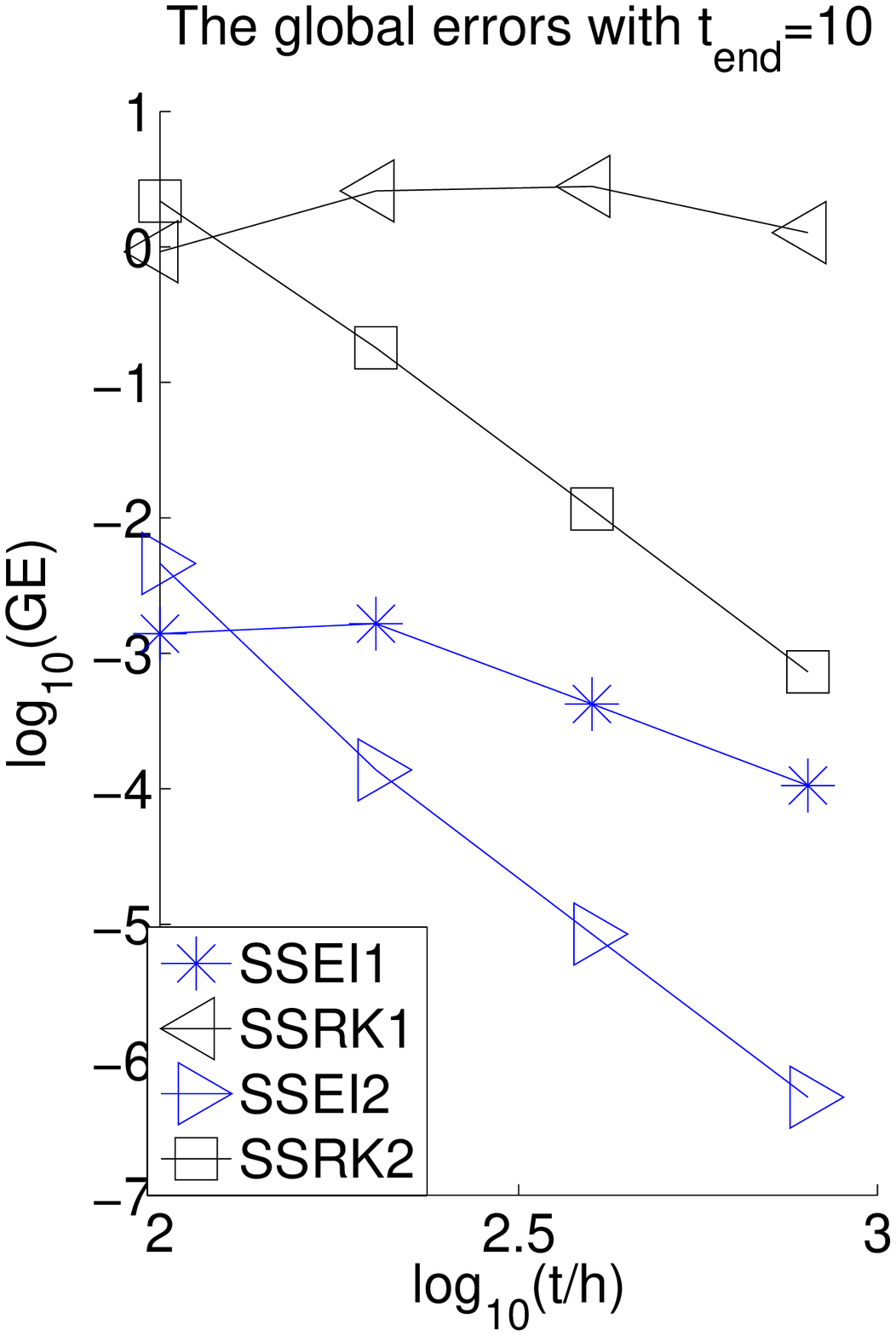}
\includegraphics[width=3.8cm,height=4cm]{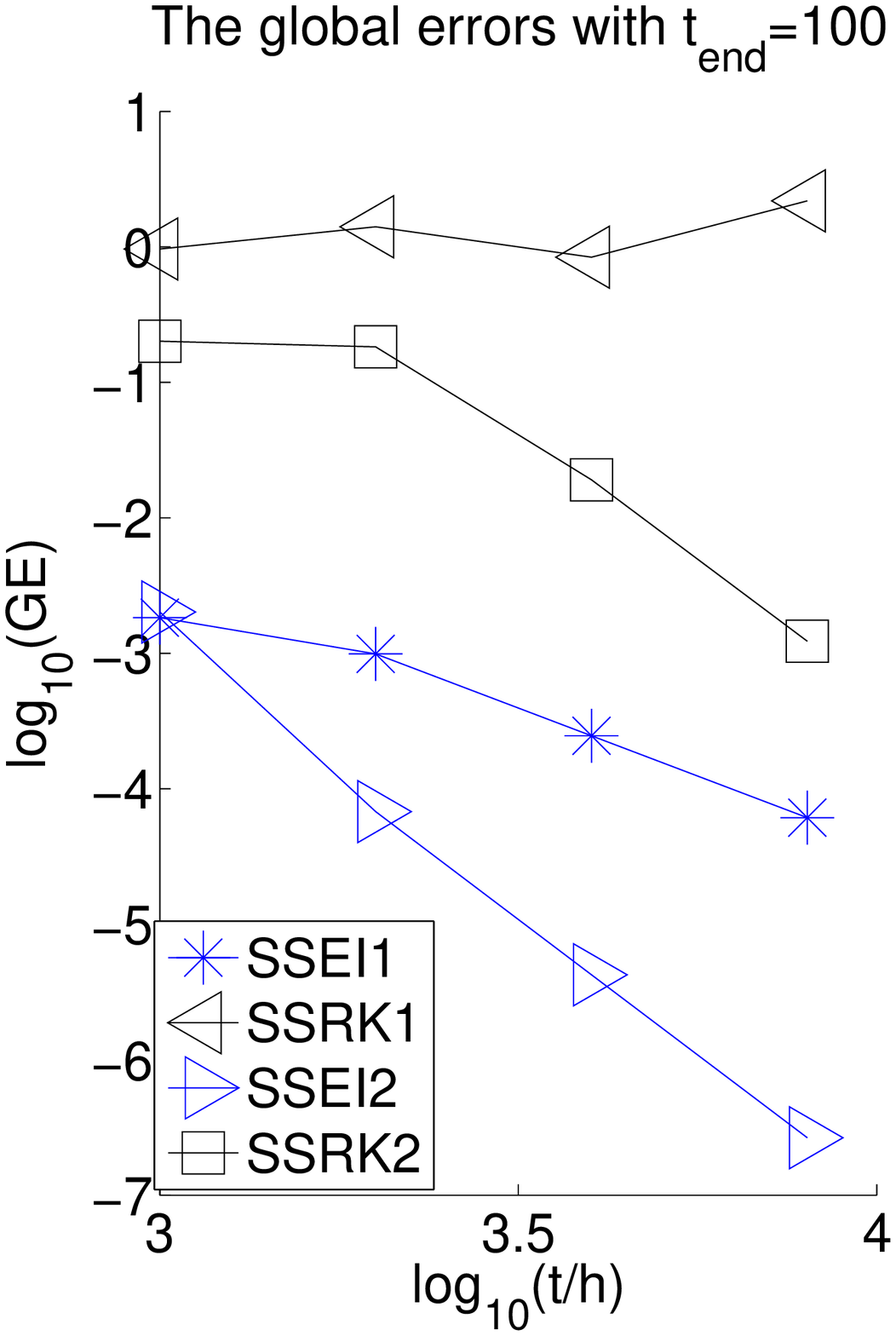}
\includegraphics[width=3.8cm,height=4cm]{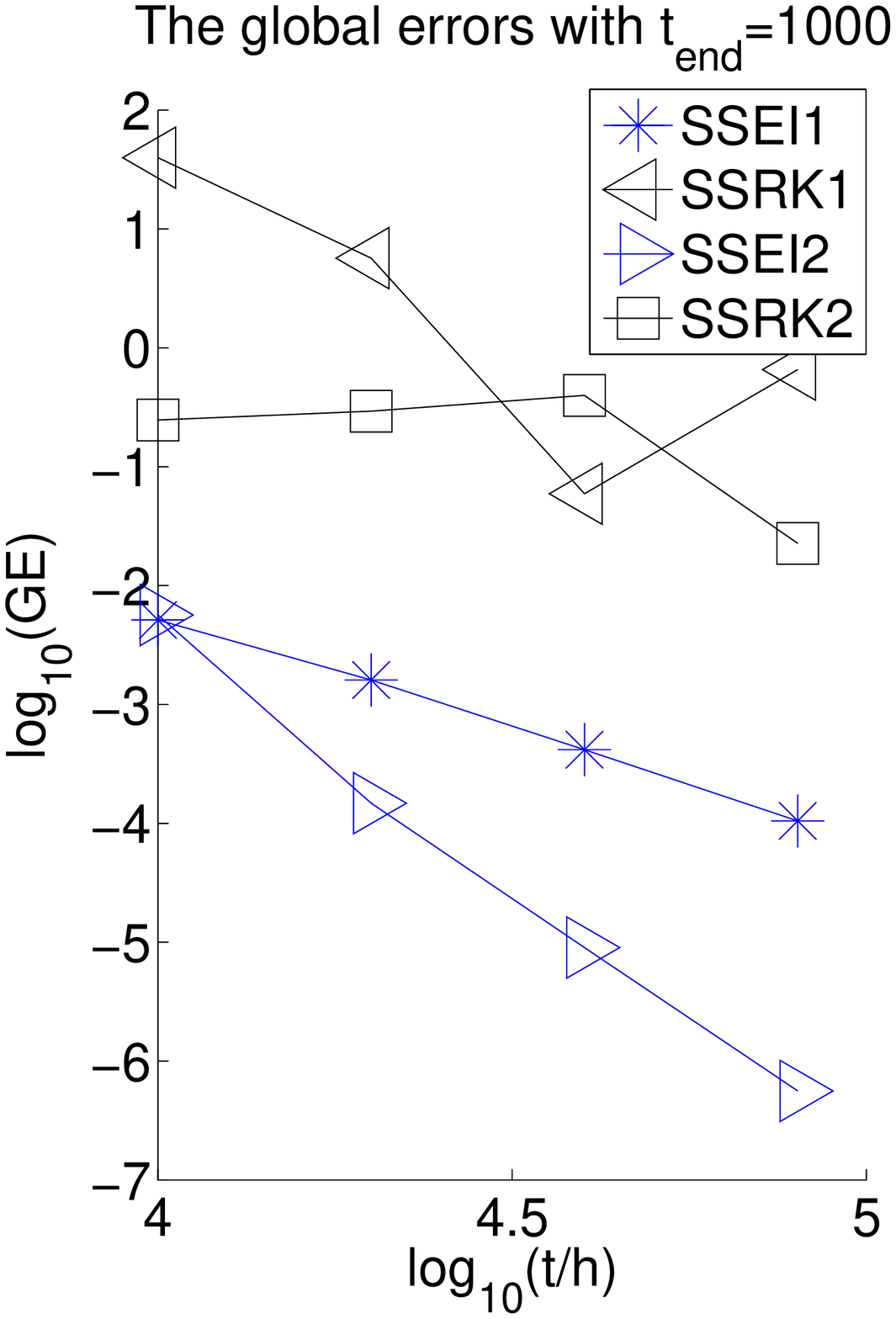}
\caption{Problem 3: the relative global errors.} \label{p3-2}
\end{figure}

 \vskip2mm\noindent\textbf{Problem 4.}
The last  numerical experiment is concerned with  the charged
particle system  with a constant magnetic field (see
\cite{Hairer2017-2}). The system can be given by \eqref{CPD} with
the potential $U(x)=\frac{1}{100\sqrt{x_1^2+x_2^2}}$ and the
constant magnetic field $B=(0,0,10)^{\intercal}.$  The initial
values are chosen as $x(0)=(0.7,1,0.1)^{\intercal}$ and
$x'(0)=(0.9,0.5,0.4)^{\intercal}.$ We firstly integrate this system
 on $[0,100]$ with $h=1/2,1/10,1/50,1/200$ and show the numerical
flows $x_2$ and $v_2=x_2'$   at the  time points
$\{\frac{1}{2}i\}_{i=1,\ldots,200}$  in  Figure \ref{p4-1}.  Then
 the problem is solved with   $t_{\textmd{end}}=10,100, 1000$ and
$h= 0.1/2^{i}$ for $i=0,\ldots,3$ and the relative global errors are
shown in Figure \ref{p4-2}. The SSEI methods are also shown to be
robust to this problem. Here, it is important to note that our SSEI1
method is explicit (see \eqref{spe EI for second}) when applied to
this problem, whereas, the SSRK1 method is implicit and the
iteration is required for solving this problem. This fact shows
another advantage of our volume-preserving exponential integrators
in comparison with volume-preserving RK methods.

 \begin{figure}[ptb]
\centering
\includegraphics[width=3.0cm,height=3.0cm]{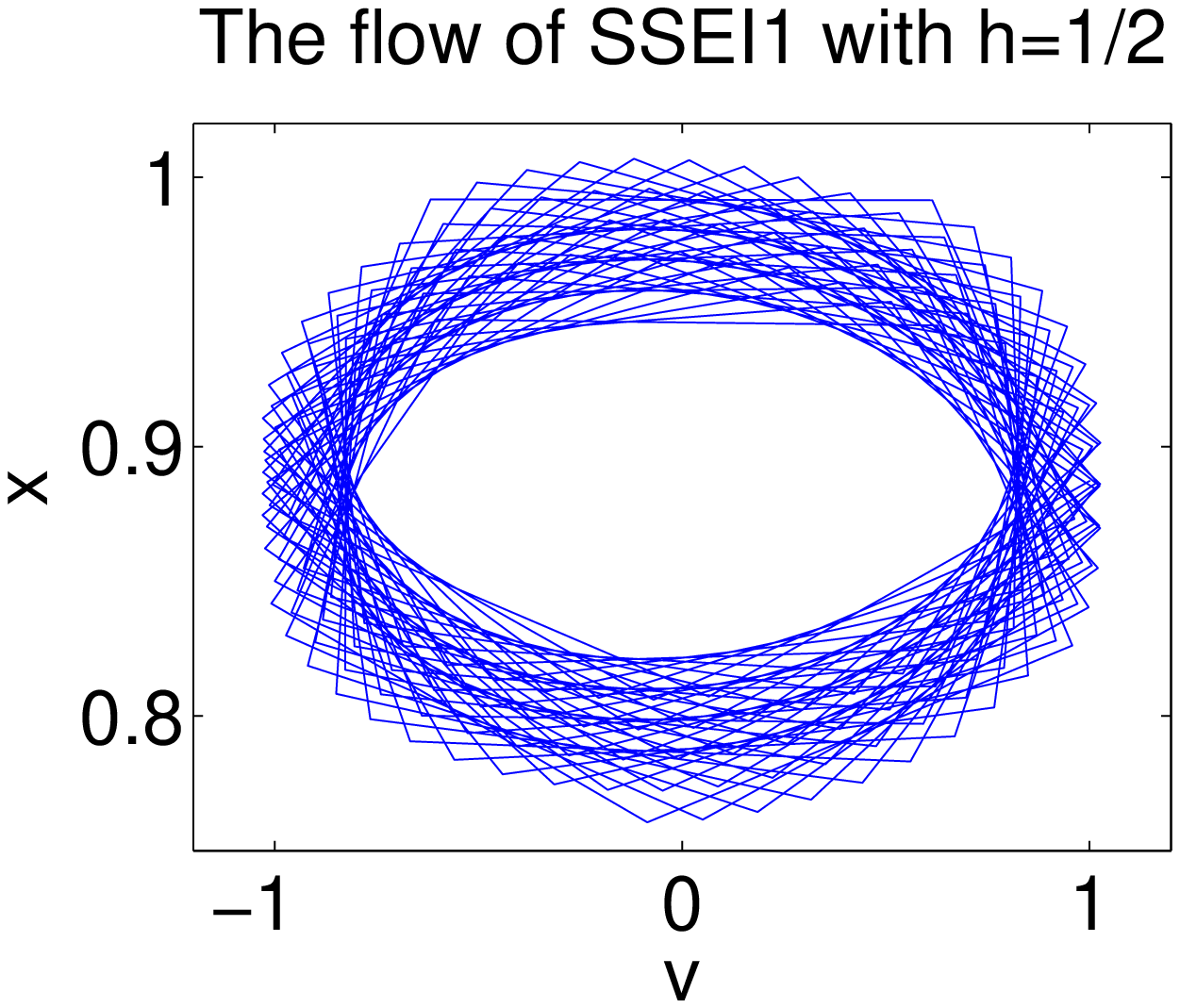}
\includegraphics[width=3.0cm,height=3.0cm]{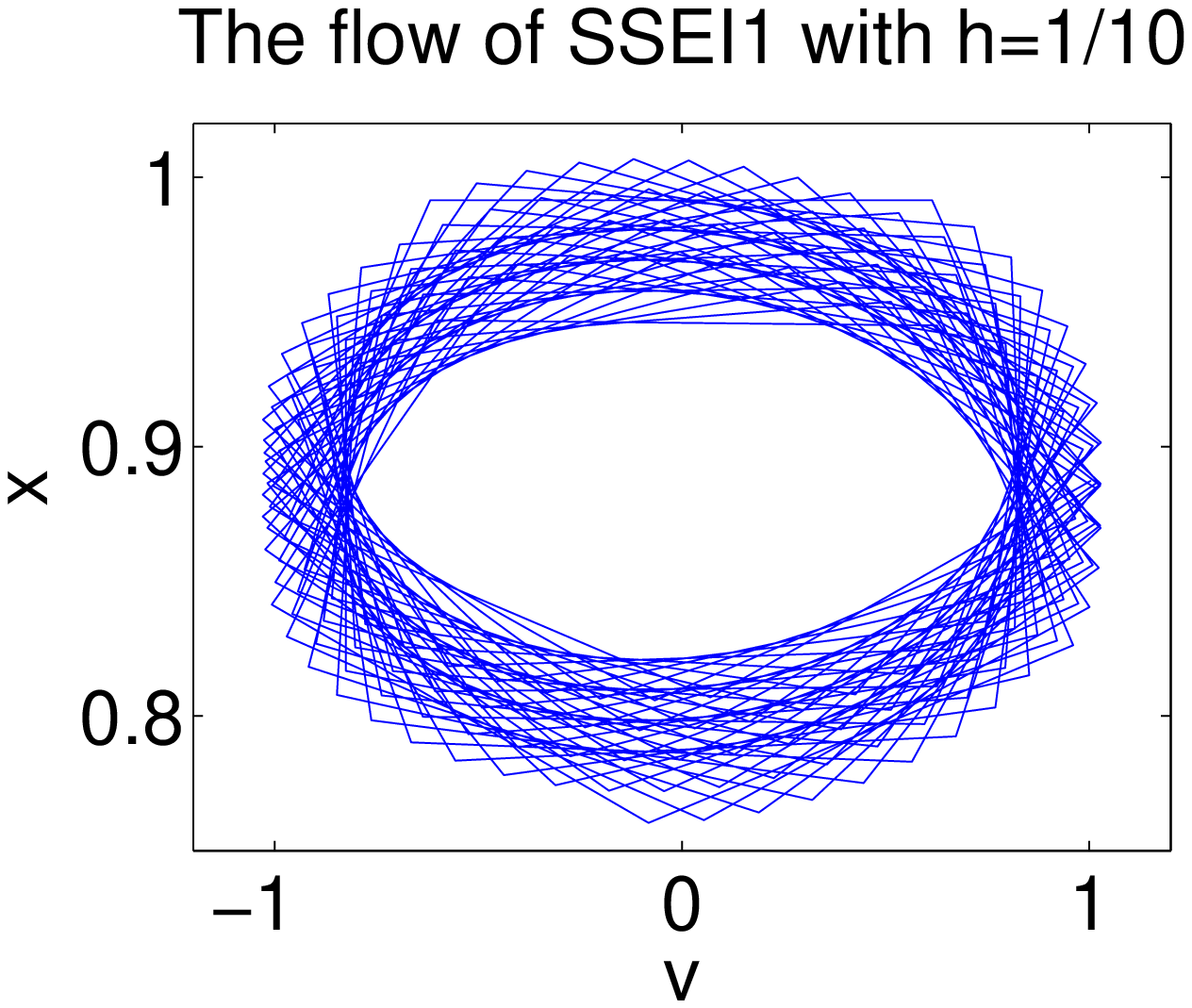}
\includegraphics[width=3.0cm,height=3.0cm]{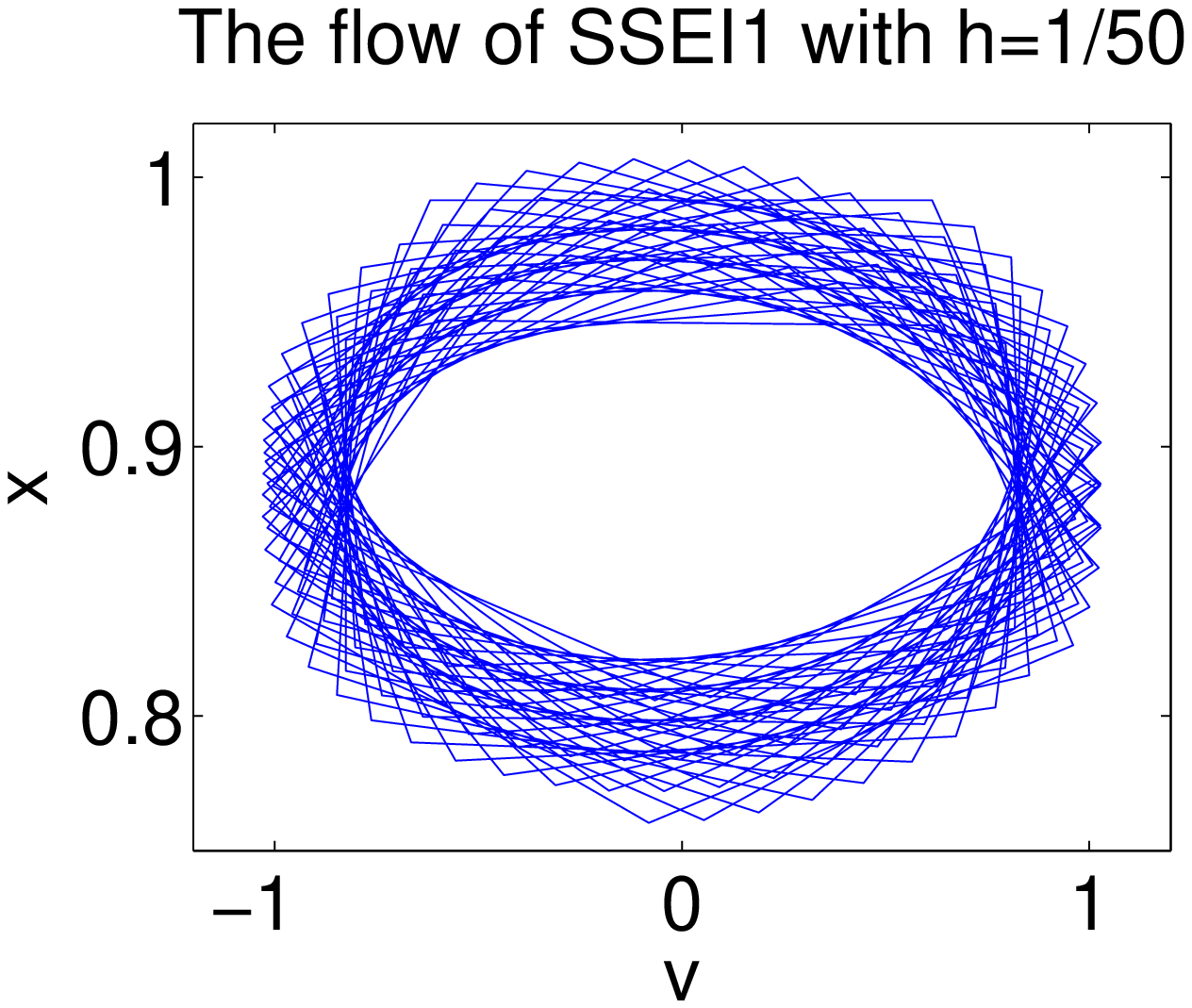}
\includegraphics[width=3.0cm,height=3.0cm]{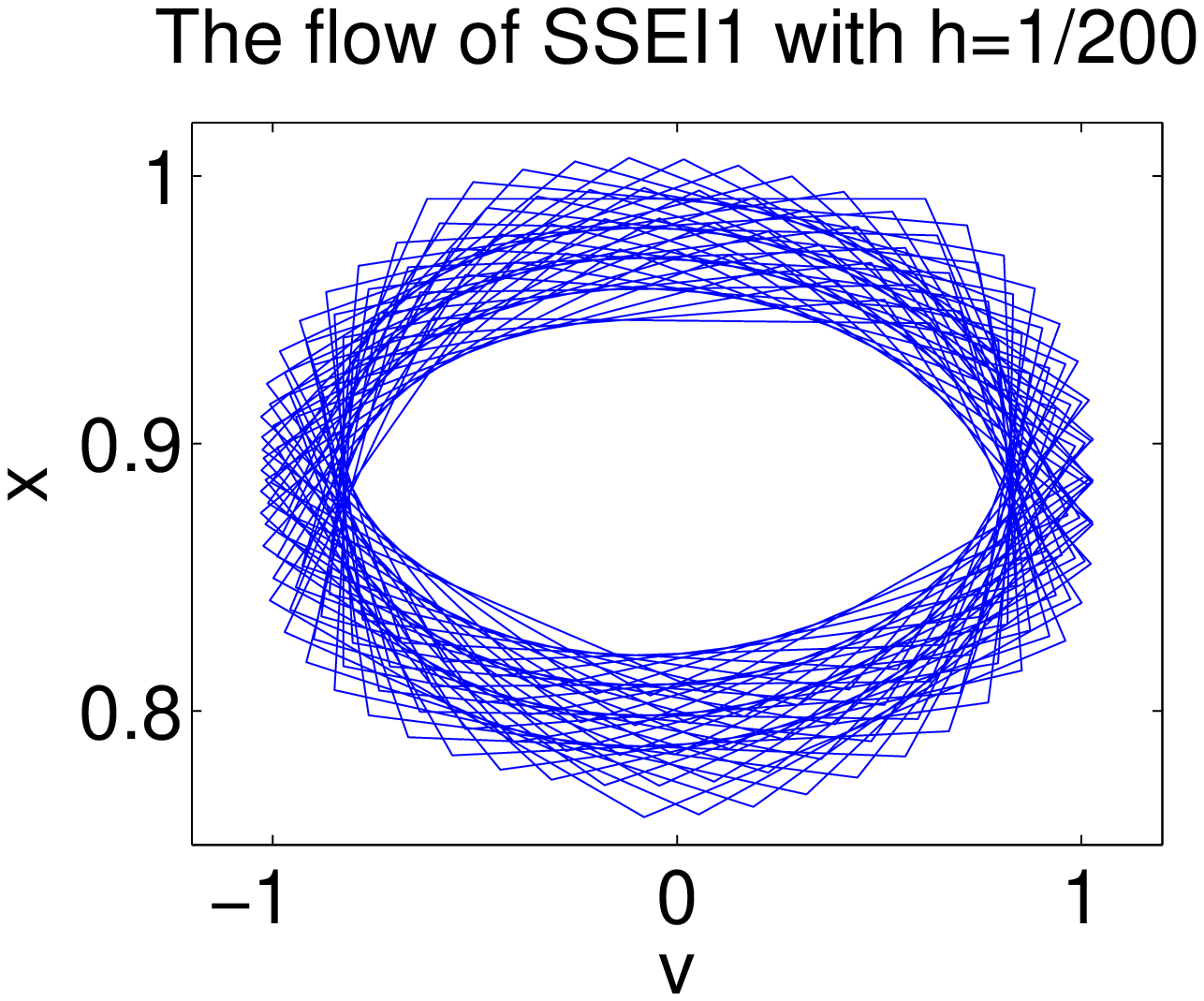}\\
\includegraphics[width=3.0cm,height=3.0cm]{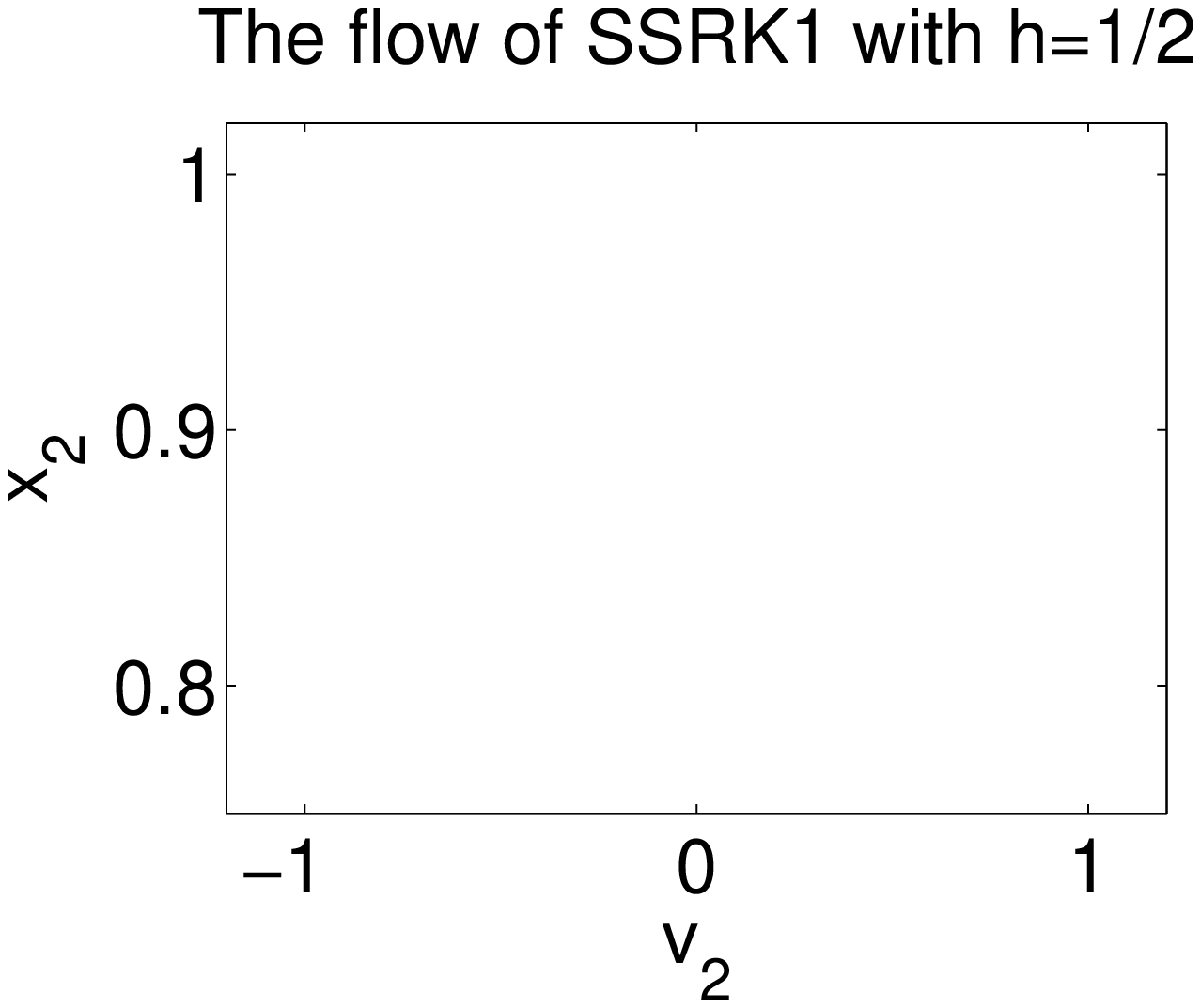}
\includegraphics[width=3.0cm,height=3.0cm]{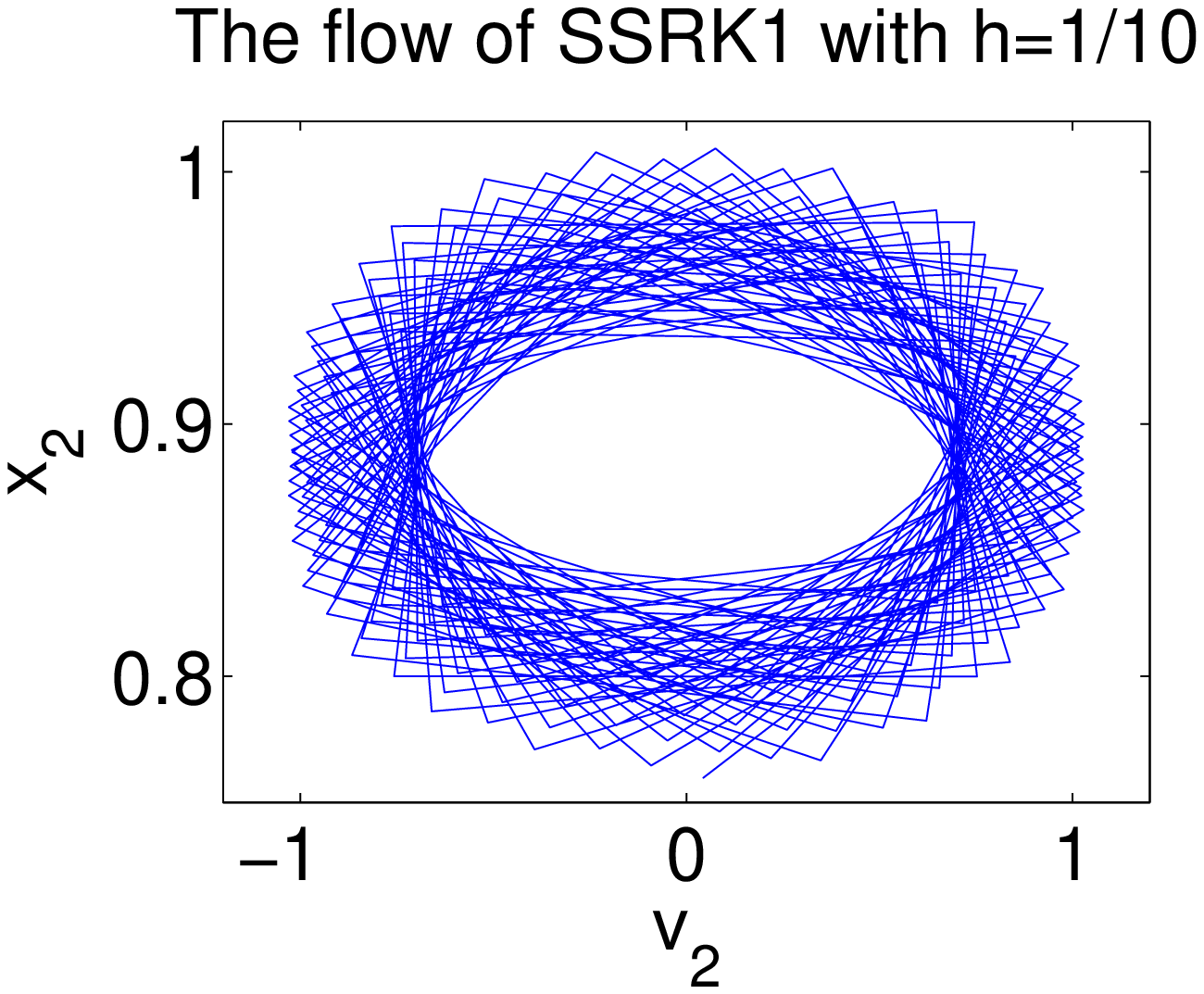}
\includegraphics[width=3.0cm,height=3.0cm]{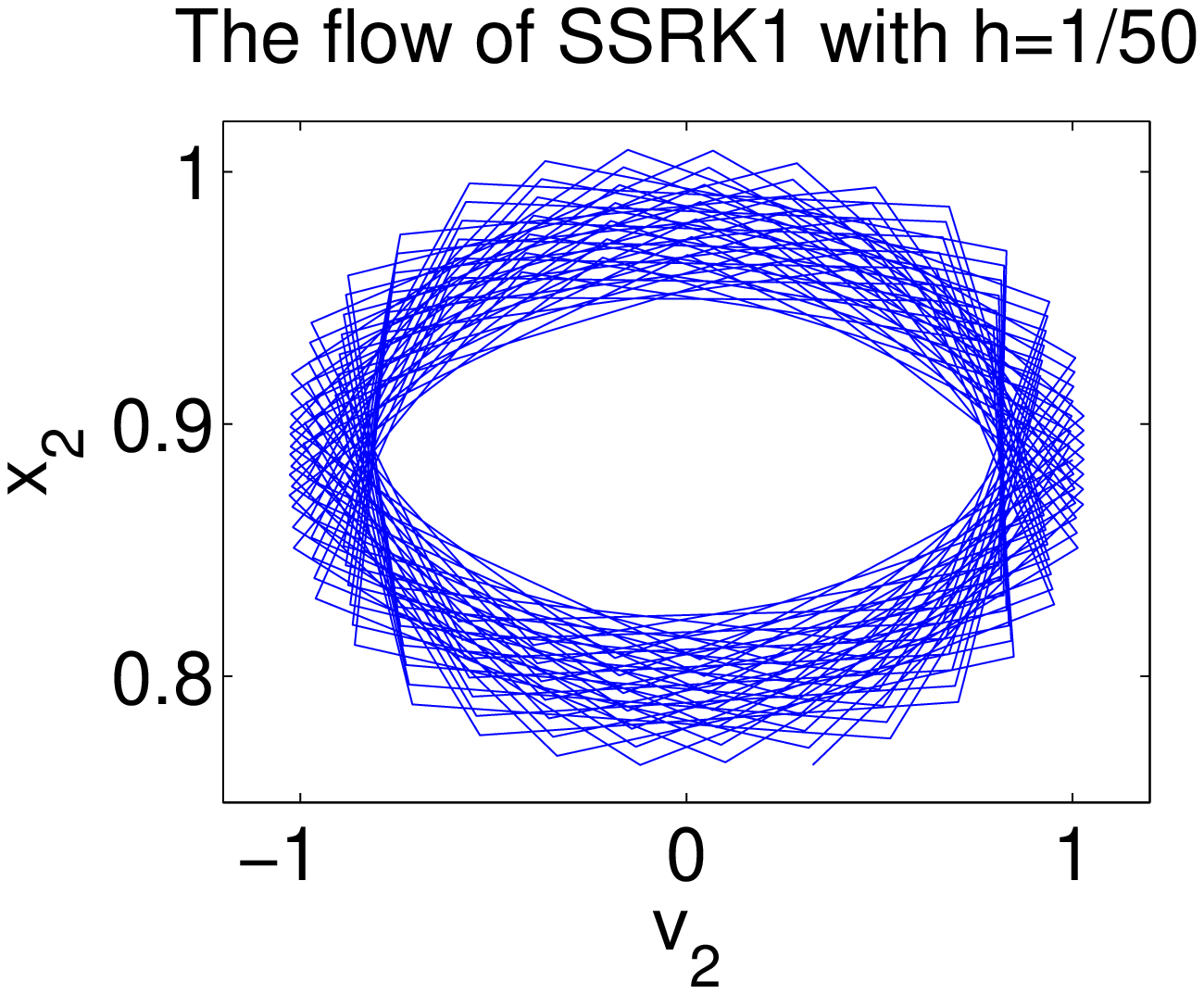}
\includegraphics[width=3.0cm,height=3.0cm]{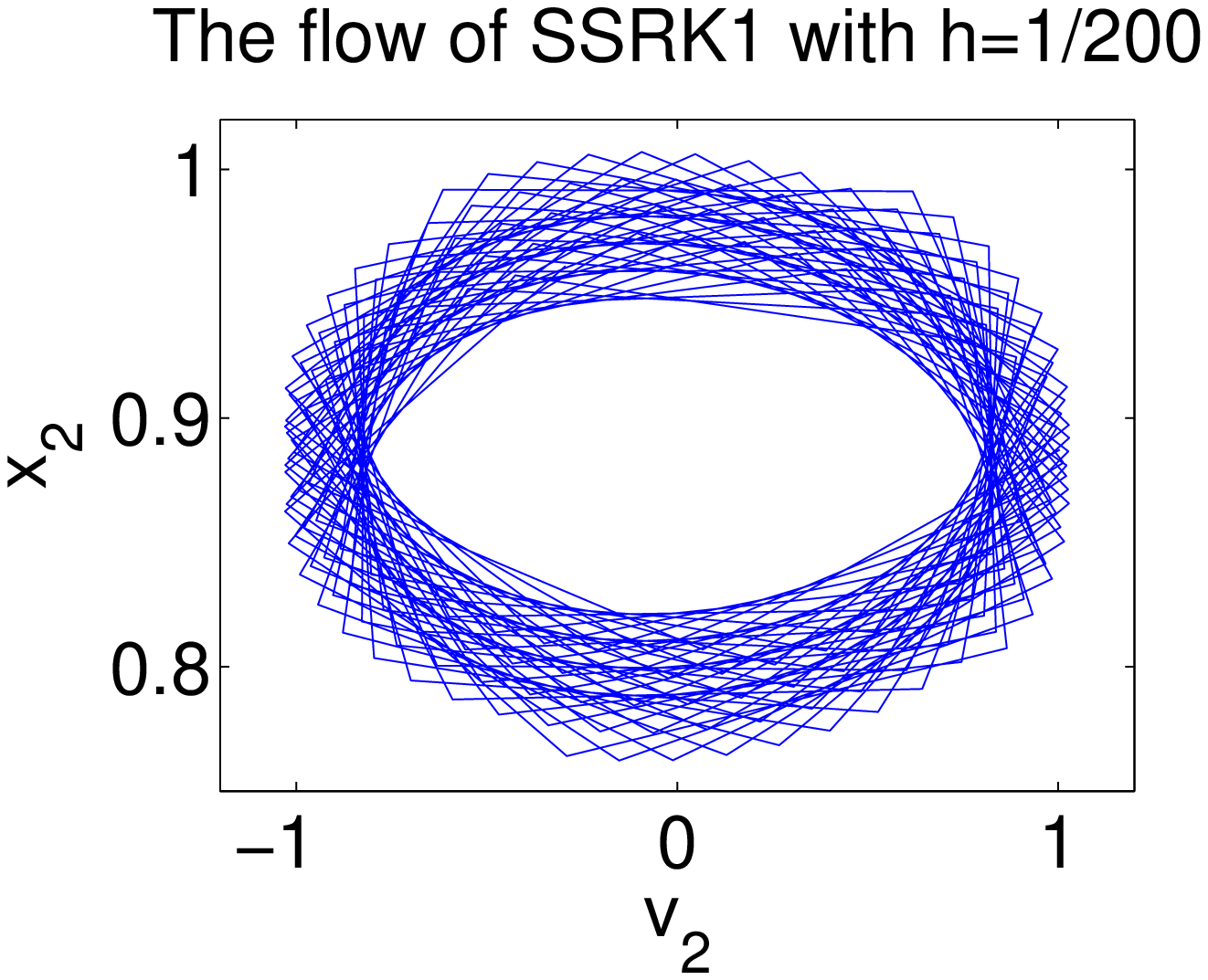}\\
\includegraphics[width=3.0cm,height=3.0cm]{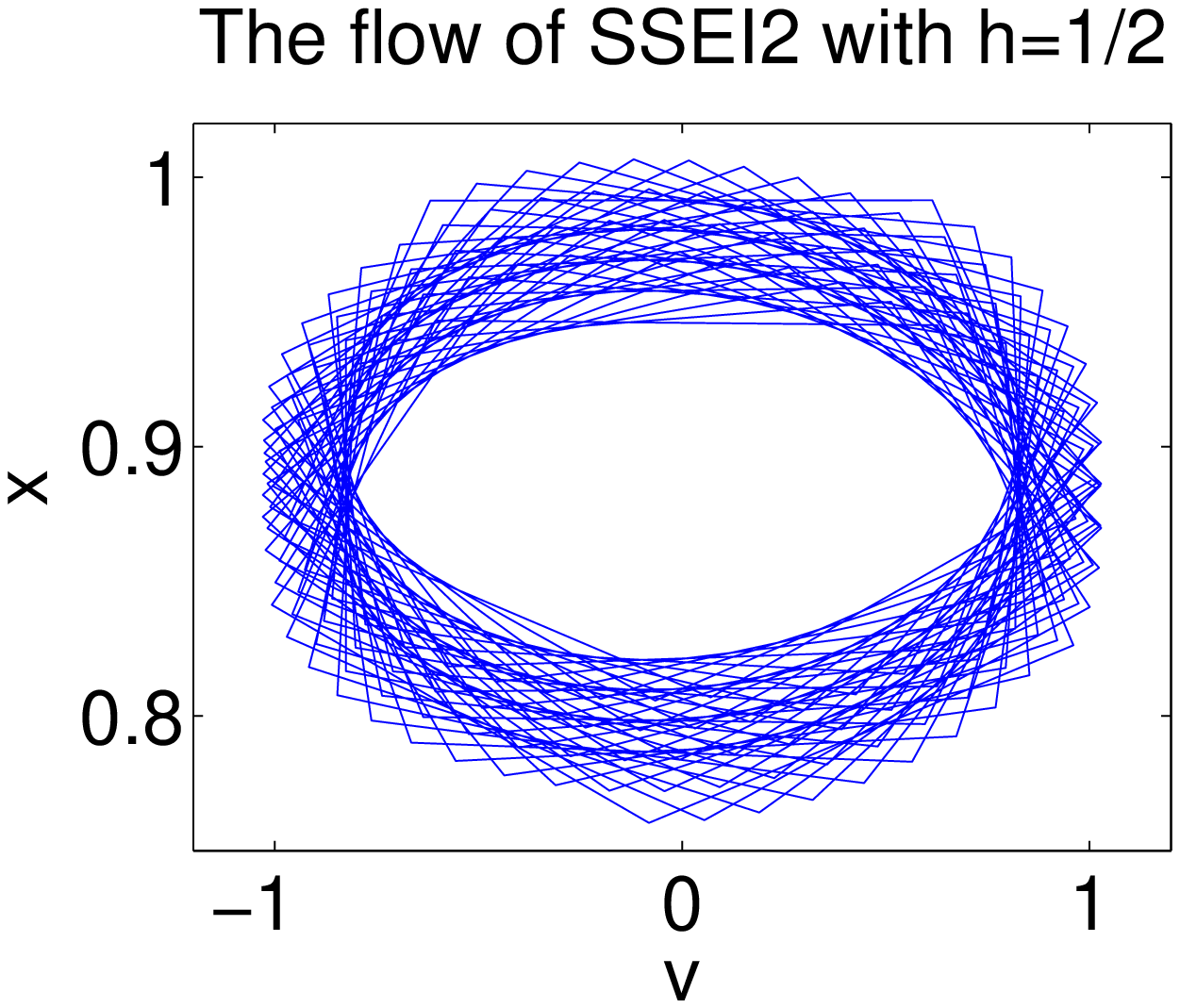}
\includegraphics[width=3.0cm,height=3.0cm]{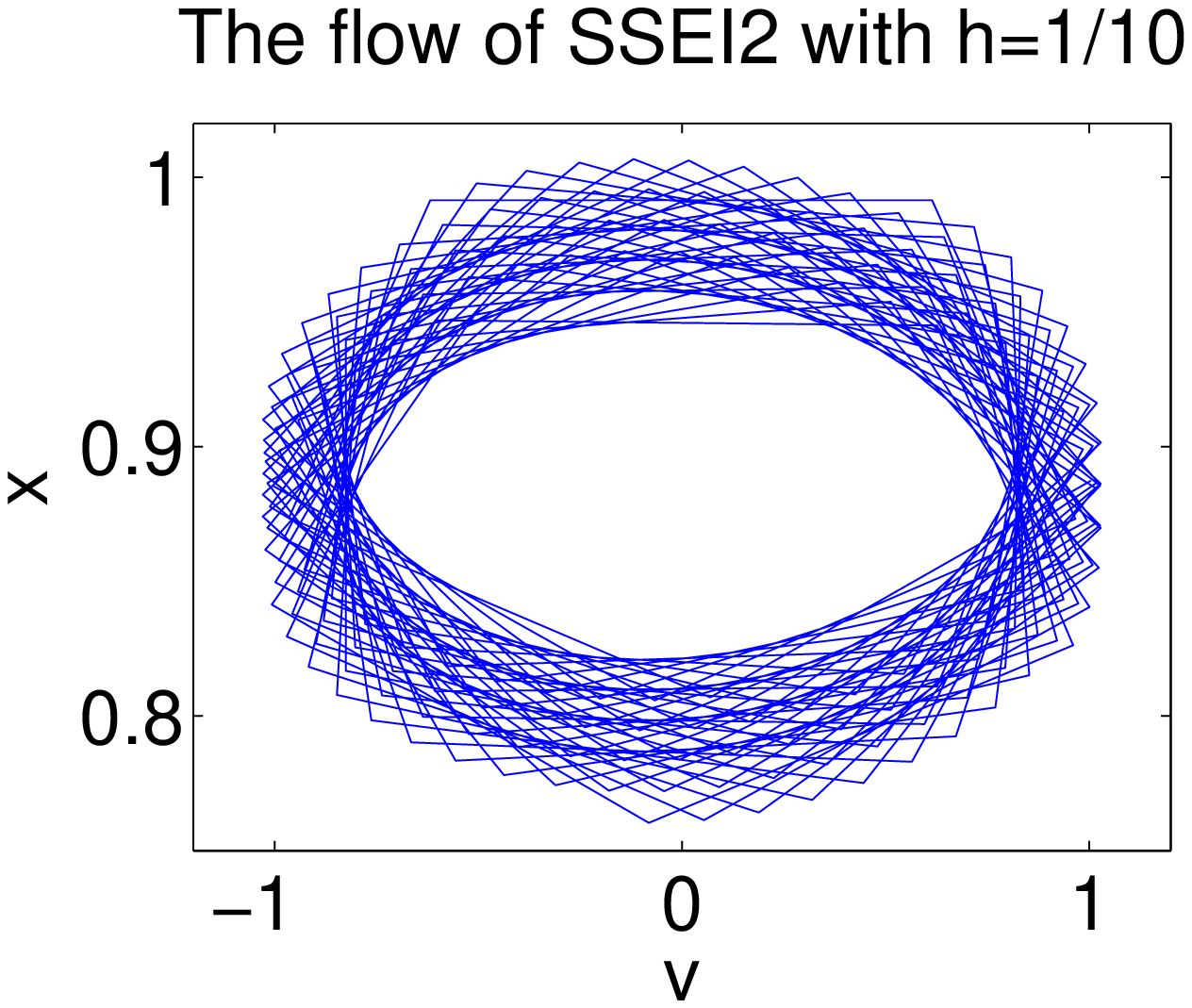}
\includegraphics[width=3.0cm,height=3.0cm]{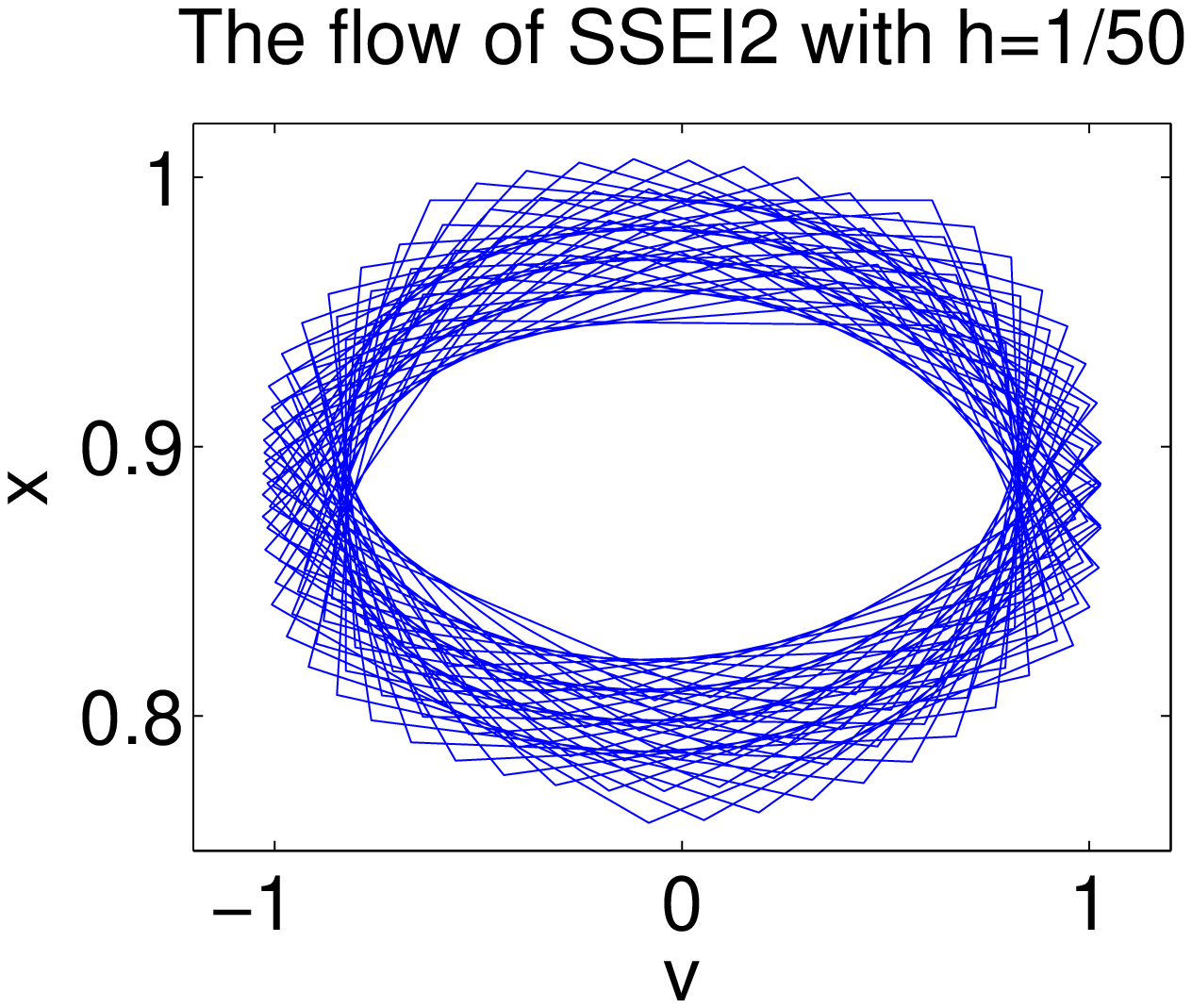}
\includegraphics[width=3.0cm,height=3.0cm]{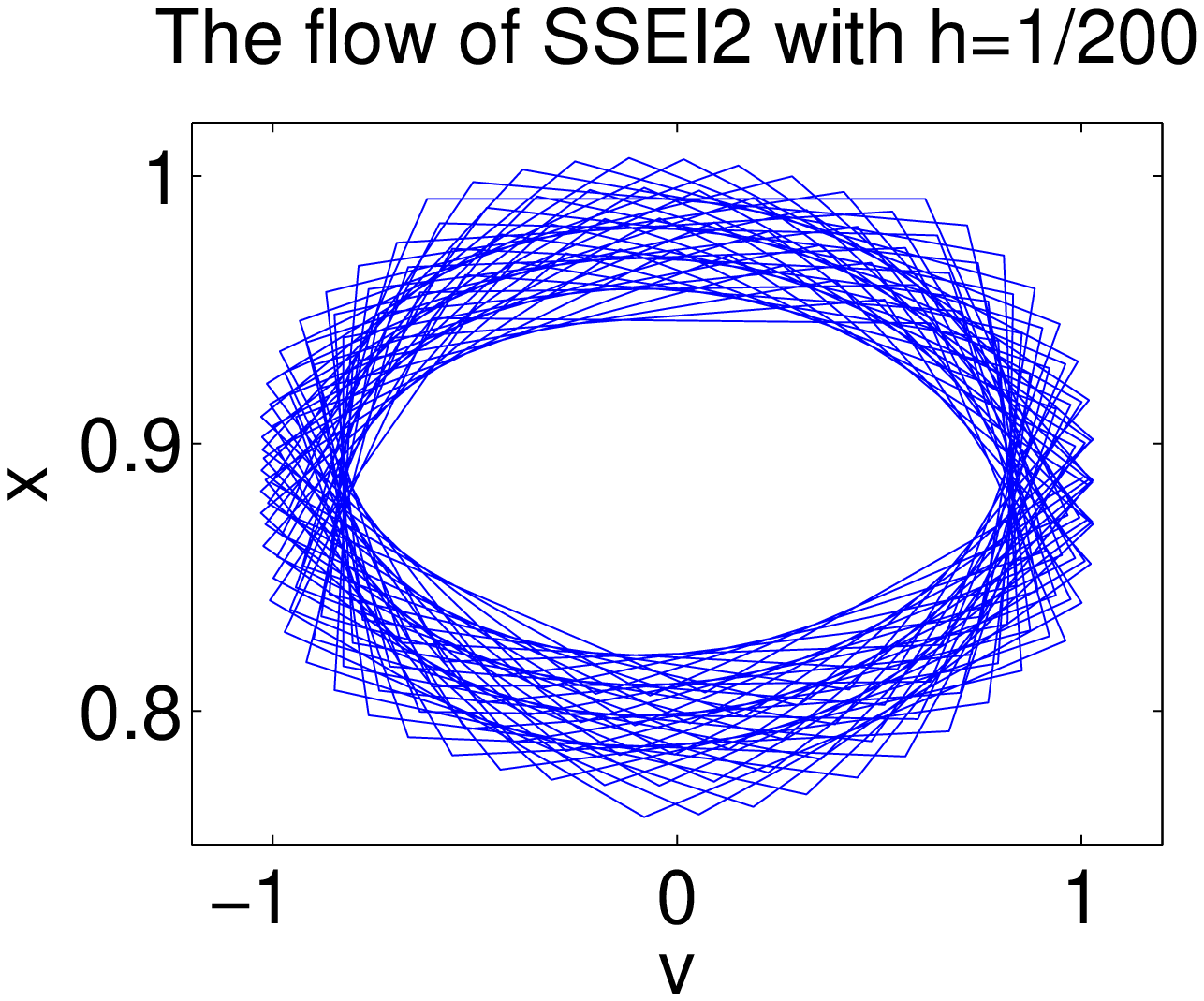}\\
\includegraphics[width=3.0cm,height=3.0cm]{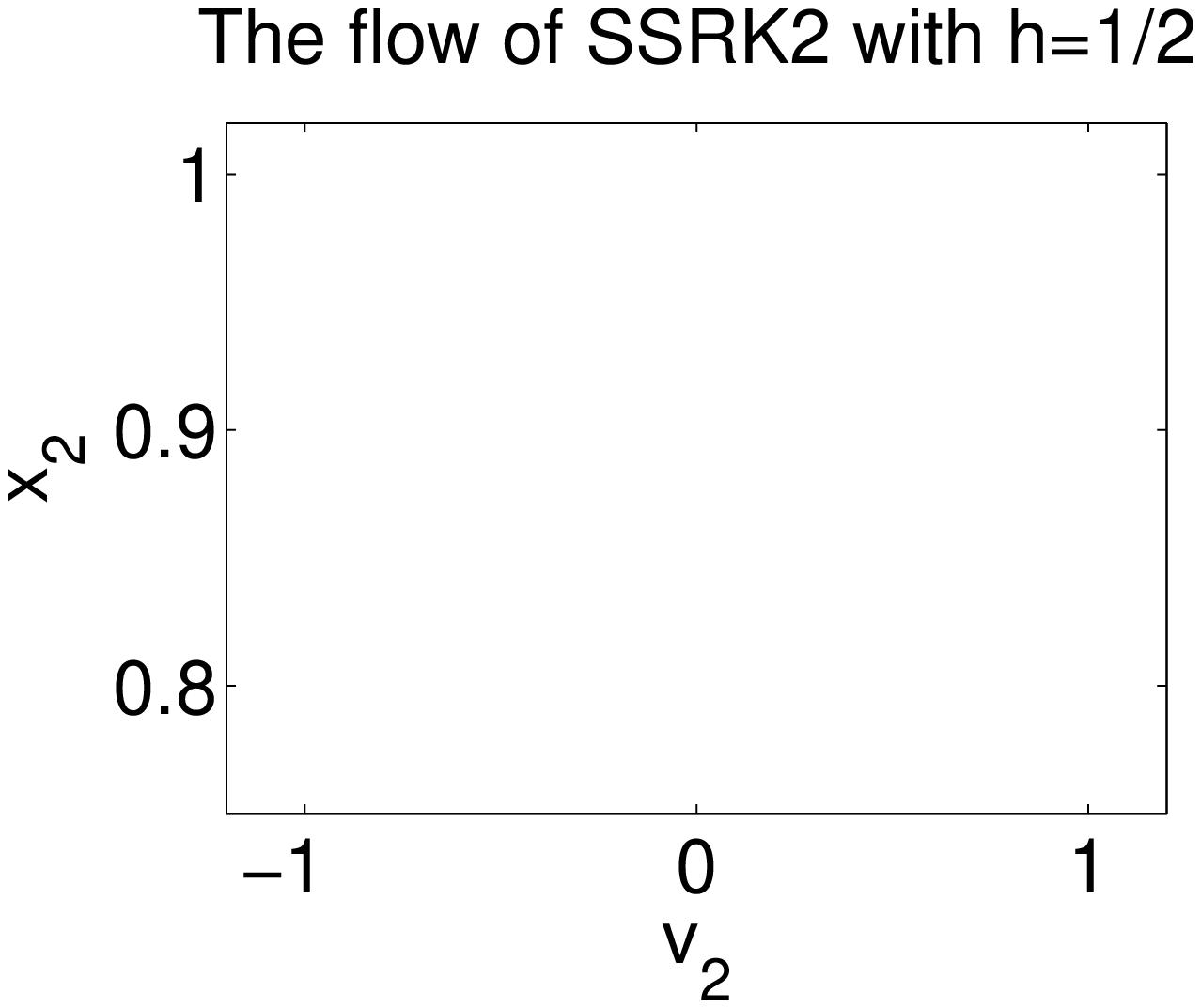}
\includegraphics[width=3.0cm,height=3.0cm]{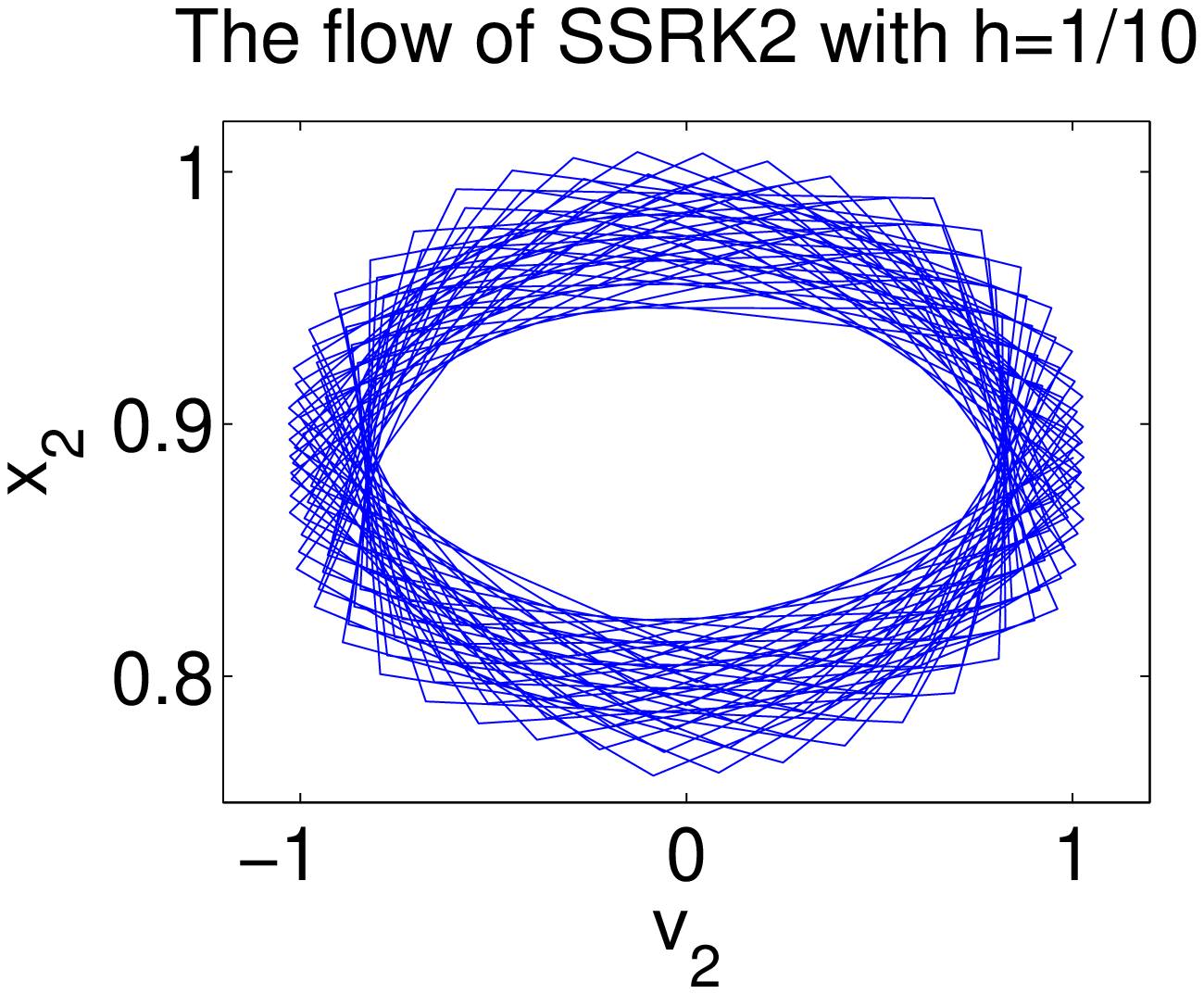}
\includegraphics[width=3.0cm,height=3.0cm]{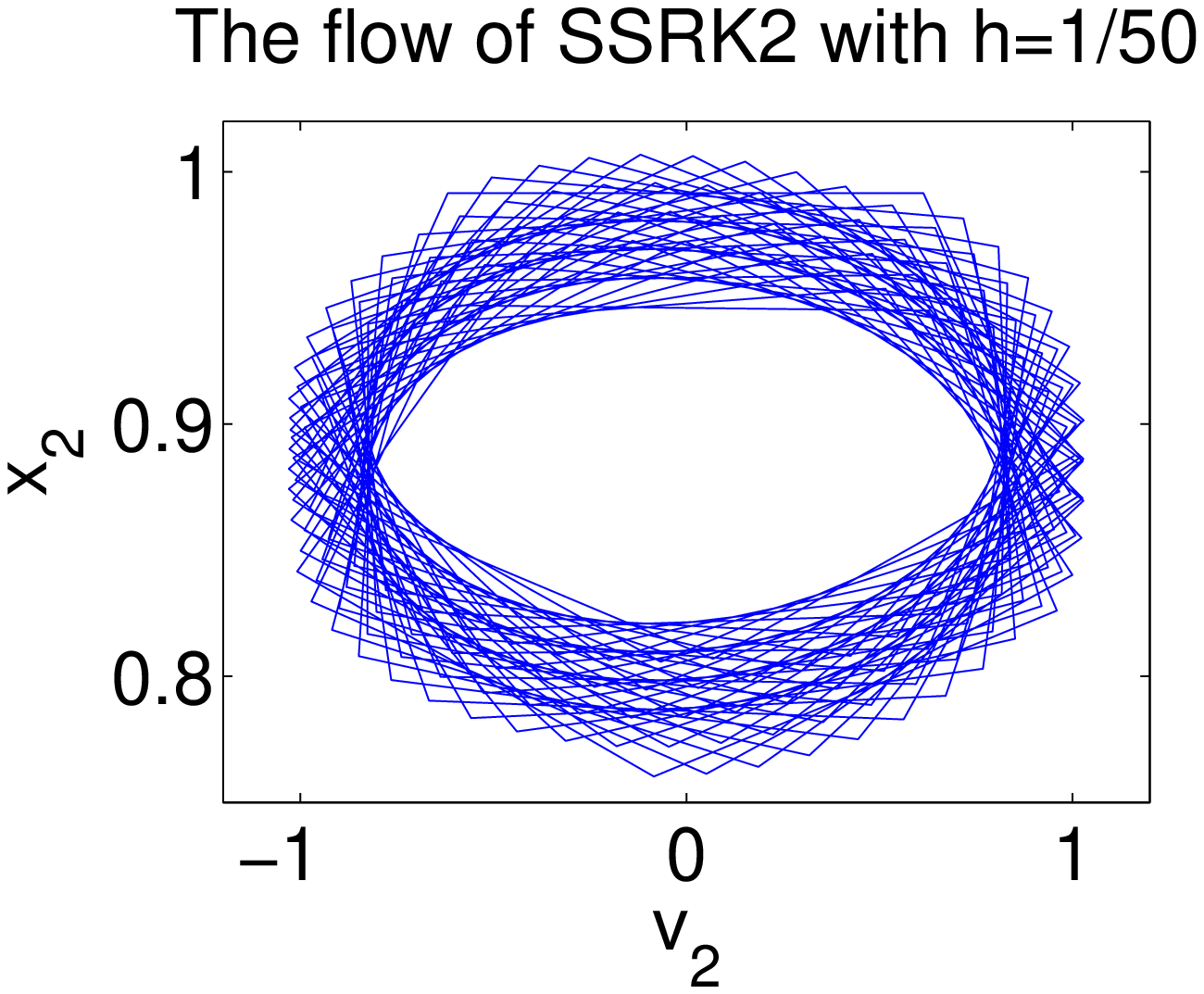}
\includegraphics[width=3.0cm,height=3.0cm]{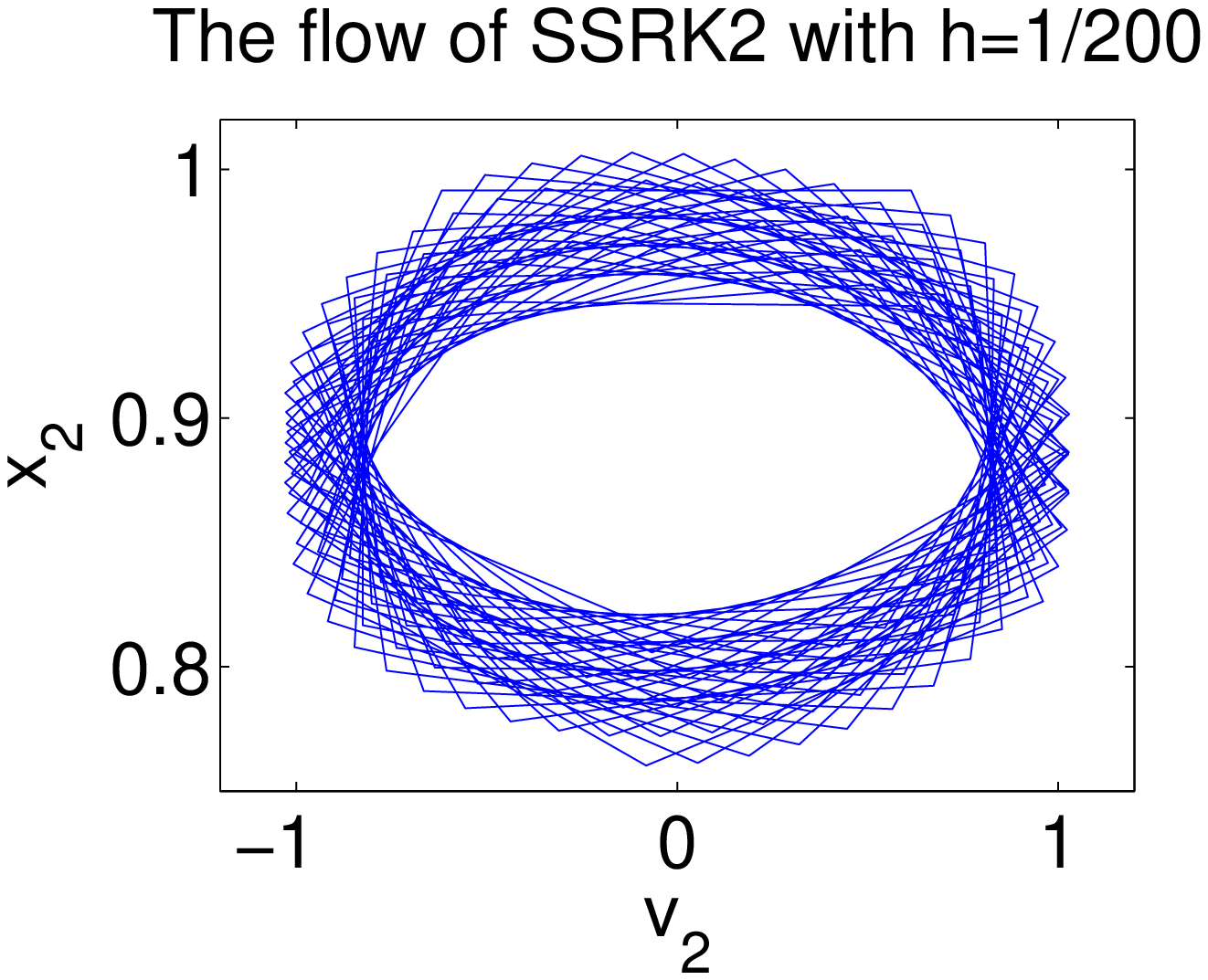}
\caption{Problem 4: the flows of different methods.} \label{p4-1}
\end{figure}

 \begin{figure}[ptb]
\centering
\includegraphics[width=3.8cm,height=4cm]{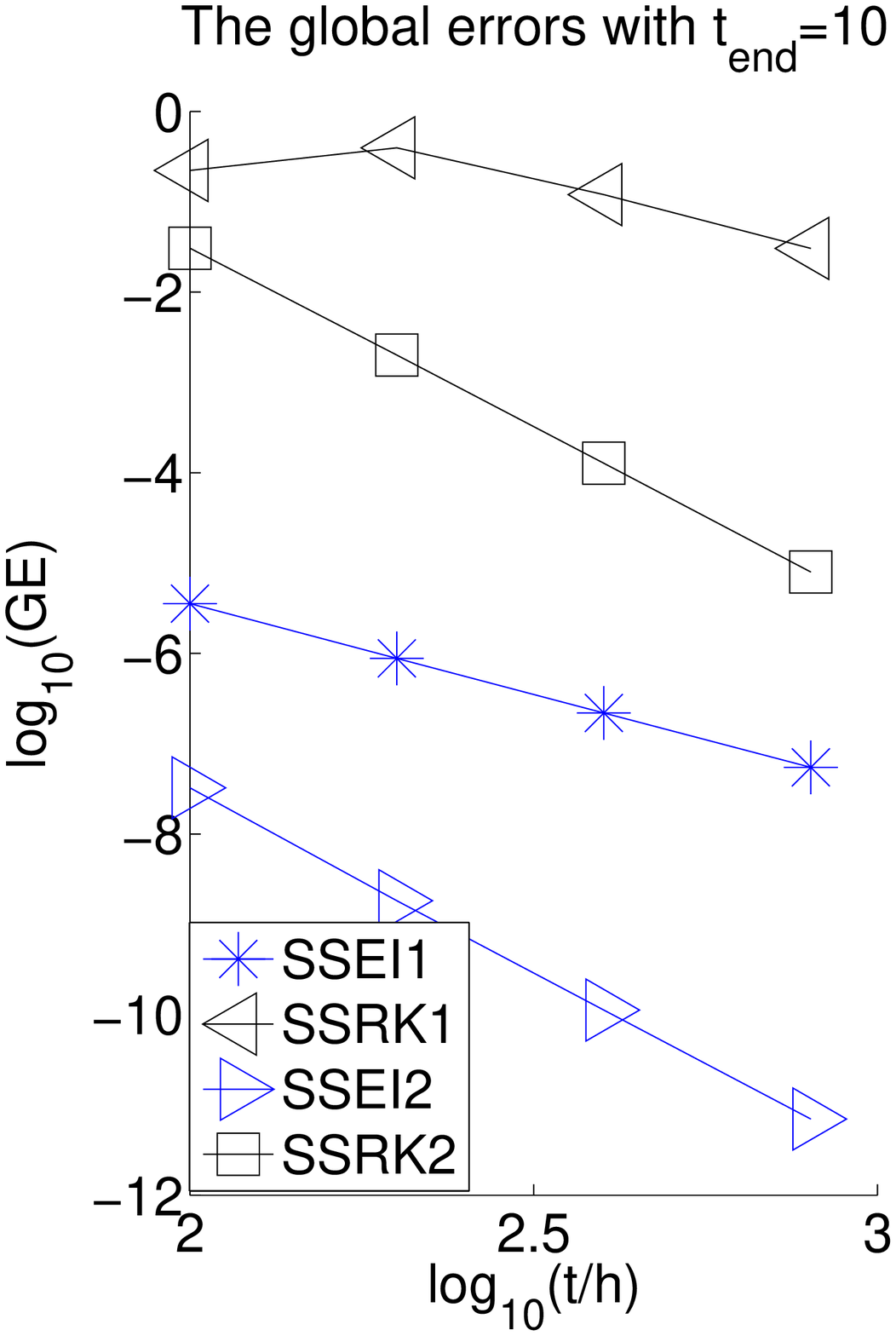}
\includegraphics[width=3.8cm,height=4cm]{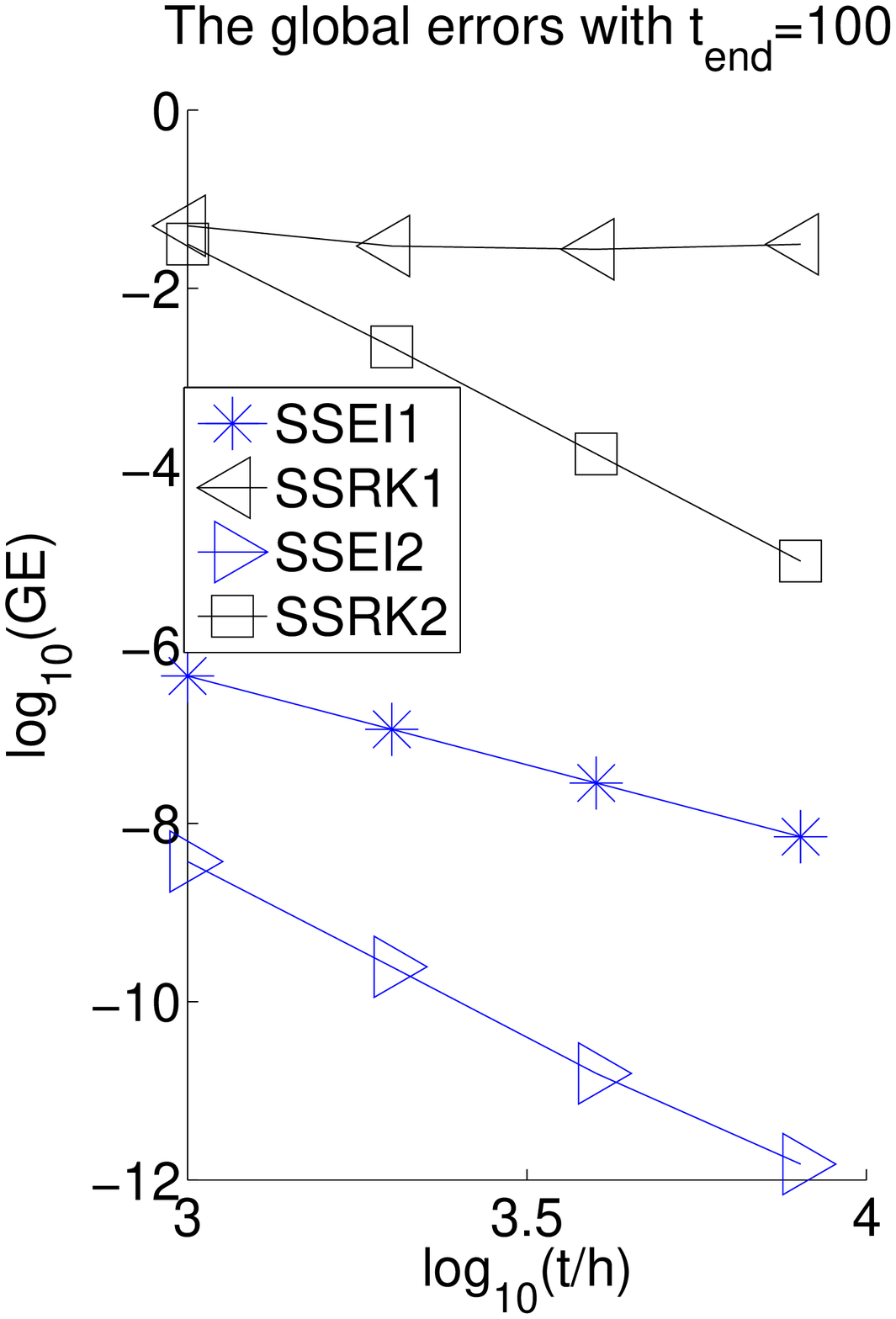}
\includegraphics[width=3.8cm,height=4cm]{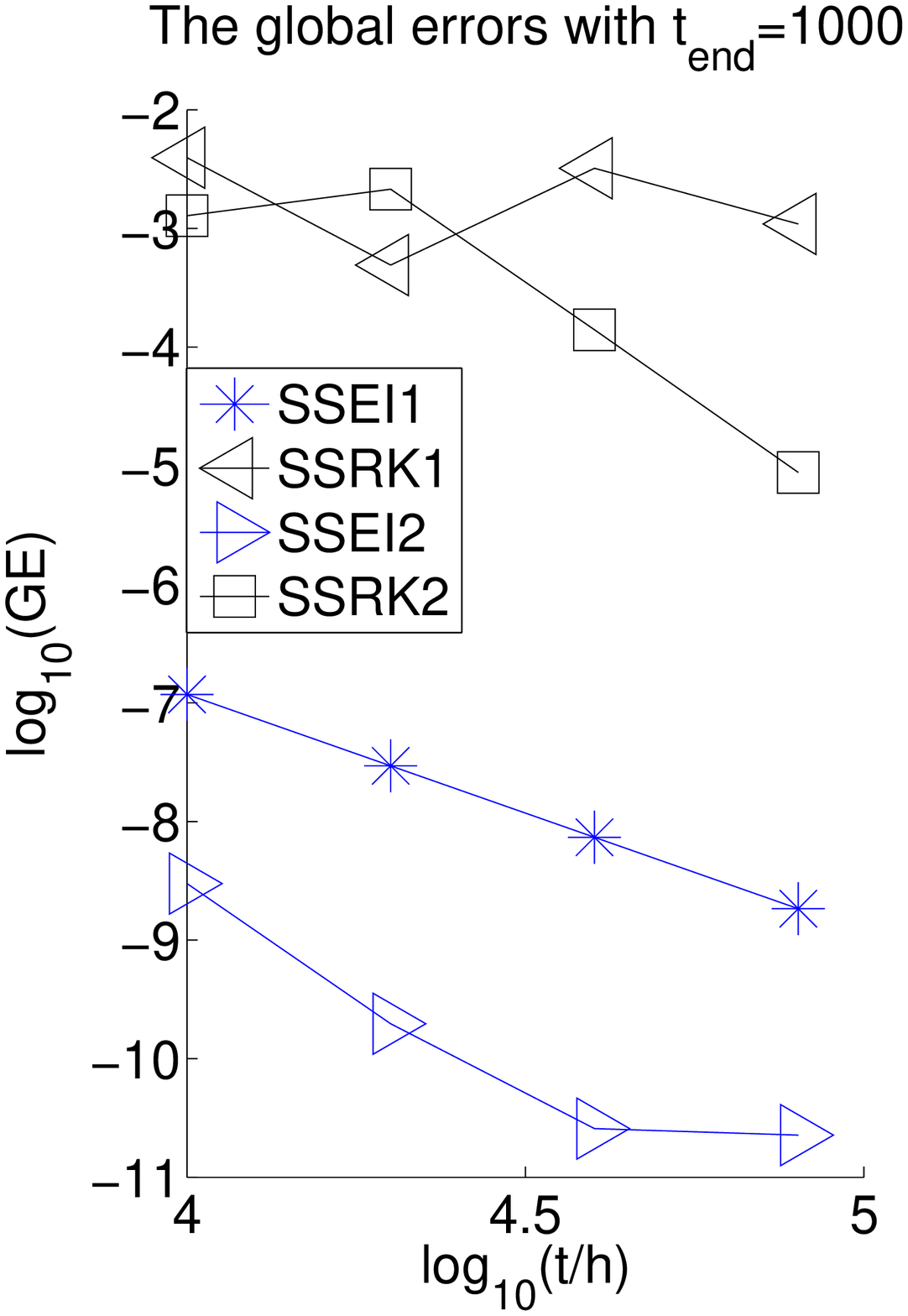}
\caption{Problem 4: the relative global errors.} \label{p4-2}
\end{figure}

\section{Conclusions} \label{sec:conclusions}
This paper studied    volume-preserving
 exponential integrators. The    necessary and
sufficient   volume-preserving condition for exponential integrators
was derived and      volume-preserving properties  were discussed
for four kinds of vector fields. It was shown that symplectic
exponential integrators can be of volume preservation for a much
larger class of vector fields than Hamiltonian systems.  It should
be noted that a new result has been proved that all
  a kind of adapted exponential integrators  methods is of volume preservation for the
highly oscillatory system \eqref{HOS-2}. Moreover, the
volume-preserving property  of ERKN/RKN methods  is discussed for
separable partitioned  systems. Some new results on Geometric
Integration were presented for second-order highly oscillatory
problems and separable partitioned  systems. We also carried out
four numerical experiments to demonstrate  the   remarkable
robustness and  superiority of
 volume-preserving  exponential integrators in comparison with volume-preserving
Runge-Kutta methods.



\begin{thebibliography}{}
\bibitem {Webb2016} \textsc{P.   Badera, D. I. McLarena, G.R.W. Quispela, and M. Webb},
\emph{Volume preservation by Runge--Kutta methods}, Appl. Numer.
Math., 109 (2016),  pp. 123-137.

\bibitem {Brugnano2014} \textsc{L. Brugnano, G. Frasca Caccia, and F. Iavernaro},  \emph{Hamiltonian Boundary Value
Methods (HBVMs) and their efficient implementation}, mathematics in
engineering, science and aerospace mesa,  5 (2014),  pp. 343-411.

\bibitem {Celledoni09} \textsc{E. Celledoni, R.I. McLachlan, D.I. McLaren, B. Owren, G.R.W.
Quispel, and W.M. Wright}, \emph{Energy-preserving Runge--Kutta
methods}, M2AN Math. Model. Numer. Anal., 43  (2009),  pp. 645-649.

\bibitem {Chartier07} \textsc{P. Chartier and A. Murua}, \emph{Preserving first integrals and volume forms
of additively split systems}, IMA J. Numer. Anal., 27 (2007),  pp.
381-405.

\bibitem{Cohen-2011}\textsc{D. Cohen and  E. Hairer}, \emph{Linear energy-preserving integrators for Poisson
systems}, BIT Numer. Math., 51   (2011),  pp.  91-101.


\bibitem{AML-2012} \textsc{A. El\'{i}as-Z\'{u}\~{n}iga}, \emph{Analytical solution of the damped
Helmholtz-Duffing equation}, Appl. Math. Lett. 25, (2012),  pp.
2349-2353.


\bibitem{Feng-2010} \textsc{K. Feng and M. Qin}, \emph{Symplectic Geometric algorithms for Hamiltonian
systems}, Springer-Verlag, Berlin, Heidelberg, 2010.

\bibitem {Feng95} \textsc{K. Feng and Z.J. Shang}, \emph{Volume-preserving algorithms for source-free
dynamical systems}, Numer. Math., 71 (1995),  pp. 451-463.

\bibitem{Hairer2010} \textsc{E. Hairer}, \emph{Energy-preserving variant of collocation methods}, J. Numer. Anal. Ind. Appl. Math., 5 (2010),  pp. 73-84.

\bibitem {hairer2000} \textsc{E. Hairer and C. Lubich},  \emph{Long-time energy
conservation of numerical methods for oscillatory differential
equations}, SIAM J. Numer. Anal., 38 (2000),  pp. 414-441.

\bibitem {Hairer2017-2} \textsc{E. Hairer and C. Lubich}, \emph{Symmetric multistep methods for
charged-particle dynamics},  SMAI J. Comput. Math., 3 (2017),  pp.
205-218.

\bibitem{Hairer16}\textsc{E. Hairer and C. Lubich}, \emph{Long-term analysis of the
St\"{o}rmer-Verlet method for Hamiltonian systems with a
solution-dependent high frequency}, Numer. Math., 134 (2016),  pp.
119-138.

\bibitem {hairer2006}  \textsc{E. Hairer, C. Lubich, and G. Wanner}, \emph{Geometric Numerical
Integration: Structure-Preserving Algorithms for Ordinary
Differential Equations}, 2nd edn.  Springer-Verlag, Berlin,
Heidelberg, 2006.

\bibitem {He2015}\textsc{Y. He,   Y. Sun, J. Liu, and H. Qin}, \emph{Volume-preserving algorithms for charged particle dynamics},
J. Comput. Phys., 281 (2015),  pp. 135-147.

\bibitem {Hochbruck2005} \textsc{M. Hochbruck and A.  Ostermann},  \emph{Explicit
exponential Runge--Kutta methods for semilineal parabolic problems},
 SIAM J. Numer. Anal.,  43  (2005),  pp. 1069-1090.



\bibitem {Hochbruck2010}  \textsc{M. Hochbruck and A.  Ostermann}, \emph{Exponential
integrators}, Acta Numer.,  19 (2010),  pp. 209-286.

 \bibitem {Hochbruck2009}  \textsc{M. Hochbruck, A.  Ostermann, and
J. Schweitzer},  \emph{Exponential rosenbrock-type methods},  SIAM
J. Numer. Anal.,  47 (2009),  pp. 786-803.

\bibitem {Iserles07} \textsc{A. Iserles,  G.R.W. Quispel, and P.S.P. Tse},  \emph{B-series methods cannot be
volume-preserving},  BIT Numer. Math.,  47 (2007),  pp. 351-378.

 \bibitem{Li_Wu(sci2016)} \textsc{Y.W.  Li and X. Wu}, \emph{Exponential integrators preserving
first integrals or Lyapunov functions for conservative or
dissipative systems},  SIAM J. Sci. Comput., 38 (2016),  pp.
1876-1895.

\bibitem {McLachlan08} \textsc{R.I. McLachlan, H.Z. Munthe-Kaas, G.R.W. Quispel, and A. Zanna}, \emph{Explicit
volume-preserving splitting methods for linear and quadratic
divergence-free vector fields}, Found. Comput. Math., 8 (2008),  pp.
335-355.

 \bibitem {McLachlan14} \textsc{R.I. McLachlan and G. R. W. Quispel}, \emph{Discrete gradient methods have
an energy conservation law}, Discrete Contin. Dyn. Syst., 34 (2014),
pp. 1099-1104.

\bibitem {McLachlan98}  \textsc{R.I. McLachlan and C. Scovel},  \emph{A survey of open problems in
symplectic integration}, Fields Inst. Commun., 10 (1998),  pp.
151-180.

\bibitem {Mei2017}  \textsc{L. Mei and  X. Wu}, \emph{Symplectic exponential Runge--Kutta methods for solving nonlinear Hamiltonian systems},
J. Comput. Phys., 338 (2017),  pp.  567-584.

\bibitem {Quispel95} \textsc{G.R.W. Quispel}, \emph{Volume-preserving integrators}, Phys. Lett. A, 206
(1995),  pp. 26-30.

\bibitem {Sanz-Serna92} \textsc{J.M. Sanz-Serna}, \emph{Symplectic integrators for Hamiltonian problems:
An overview}, in Acta Numerica 1992, A. Iserles, ed., Cambridge
University Press, Cambridge, UK, (1992),  pp.  243-286.

\bibitem{wang-2016}  \textsc{B. Wang, A. Iserles, and X. Wu}, \emph{Arbitrary-order trigonometric  Fourier collocation methods for
multi-frequency oscillatory  systems},  Found. Comput. Math.,  16
(2016),  pp. 151-181.


\bibitem {Wang2018-1} \textsc{B. Wang and X. Wu}, \emph{Functionally-fitted energy-preserving integrators
for Poisson systems}, J. Comput. Phys., 364 (2018),  pp. 137-152.

\bibitem{wu2017-JCAM}  \textsc{B. Wang,   X. Wu, and F. Meng}, \emph{Trigonometric collocation methods based
on Lagrange basis polynomials for multi-frequency oscillatory
second-order differential equations},
 J. Comput. Appl. Math., 313  (2017),  pp.   185-201.

  \bibitem{wang2017-JCM} \textsc{B. Wang, X. Wu, F. Meng, and Y. Fang}, \emph{Exponential
Fourier collocation methods for solving first-order differential
equations}, J. Comput. Math., 35 (2017),  pp. 711-736.

\bibitem{wang2017-Cal}
\textsc{B. Wang, H. Yang, and F. Meng}, \emph{Sixth order symplectic
and symmetric explicit ERKN schemes for solving multi-frequency
oscillatory nonlinear Hamiltonian equations}, Calcolo, 54 (2017),
pp. 117-140.

\bibitem{wubook2018} \textsc{X. Wu and B. Wang},
\emph{Recent Developments in Structure-Preserving Algorithms for
Oscillatory Differential Equations}, Springer Nature Singapore Pte
Ltd, 2018.

 \bibitem{wu2013-JCP}\textsc{X. Wu, B. Wang, and W. Shi}, \emph{Efficient energy preserving integrators for
oscillatory Hamiltonian systems}, J. Comput. Phys., 235 (2013),  pp.
587-605.


\bibitem{wu2013-book}\textsc{X. Wu, X. You, and B. Wang},  \emph{Structure-preserving algorithms for oscillatory
differential equations}, Springer-Verlag, Berlin, Heidelberg, 2013.

\bibitem {wu2010-1}  \textsc{X. Wu, X. You, W. Shi, and B. Wang}, \emph{ERKN integrators for
systems of oscillatory second-order differential equations}, Comput.
Phys. Comm., 181   (2010),  pp.  1873-1887.

\bibitem {Zanna2013} \textsc{H. Xue and A. Zanna}, \emph{Explicit volume-preserving splitting methods for polynomial
divergence-free vector fields}, BIT Numer. Math., 53  (2013),  pp.
265-281.

\bibitem {Zanna2014} \textsc{A. Zanna}, \emph{Explicit volume-preserving splitting methods for
divergence-free ODEs by tensor-product basis decompositions}, IMA J.
Numer. Anal. 35, (2014),  pp.  89-106.


\end{thebibliography}
\end{document}